\documentclass[11pt,a4paper]{article}

\usepackage[english]{babel}
\usepackage{amsmath}
\usepackage{amsfonts,amsthm,amssymb}
\usepackage{blkarray}
\usepackage{latexsym,bm}
\usepackage{indentfirst}
\usepackage{graphicx}
\usepackage{color}
\usepackage{tcolorbox}
\usepackage{tikz}
\usepackage{caption}
\usepackage{multicol}
\usepackage{multirow}
\usepackage[letterpaper,top=2cm,bottom=2cm,left=3cm,right=3cm,marginparwidth=1.75cm]{geometry}
\usepackage[colorlinks=true,anchorcolor=blue,filecolor=blue,linkcolor=blue,urlcolor=blue,citecolor=red]{hyperref}

\newtheorem{theorem}{Theorem}[section]

\newtheorem{lemma}{Lemma}[section]
\newtheorem{false statement}{False statement}
\newtheorem{corollary}{Corollary}[section]

\theoremstyle{definition}

\newtheorem{claim}{Claim}

\newtheorem{remark}{Remark}

\numberwithin{case}{subsection}

\title{\textbf{Extremal results on the second largest eigenvalue of graphs with given order}
\thanks{
This work is supported by National Natural Science Foundation of China (Nos. 12171154, 12301438), Chenguang Program of Shanghai Education Development Foundation and Shanghai Municipal Education Commission (No. 23CGA37).}
}
\author{
	Zhiwen Wang,\,\,
	Ji-Ming Guo\thanks{Corresponding author.\\ 
	\null\hspace{6.3mm}Email addresses:  walkerwzw@163.com (Z. Wang); jimingguo@hotmail.com (J.-M. Guo).}\vspace{4mm}\\
	School of Mathematics, East China University of Science and Technology,\\ Shanghai, 200237, P. R. China.
}
\date{\null}
\begin{document}
\maketitle

\begin{abstract}
Adding or removing an edge from a connected graph will strictly increase or decrease its spectral radius, respectively. 
This phenomenon, however, fails for the second largest eigenvalue.
In this paper, we demonstrate the effects on the second largest eigenvalue $\lambda_2(G)$ of a connected graph $G$ after edge addition or deletion. 

In 1989, Chung, Graham and Wilson showed $\max\{|\lambda_2|,|\lambda_n|\}>\Omega(n)$ for dense $K_{r+1}$-free graphs of order $n$, giving spectral comprehension of existence of large clique or independent set, respect to Ramsey theory.
Applying the results of effects on $\lambda_2$ after edge operations, we determine the maximum value of $\lambda_2$ among all $K_{r+1}$-free connected graphs with given order, and completely characterize the extremal graphs.

Moreover, for arbitrary given graph $F$, we investigates the maximum second largest $\lambda_2(G)$ among $F$-free connected graphs of order $n$.
Let $\rho^*(n,F)$ be the maximum spectral radius of $F$-free graphs on $n\ge n_F$ vertices, and $G^*(n,F)$ be a graph with its spectral radius $\rho\big(G^*(n,F)\big)=\rho^*(n,F)$.
We prove that, for an $F$-free connected graph $G$ of order $n\ge f(n_F)$, \\(1) if $n$ is odd, then $$\lambda_2(G)\le\rho^*\left(\frac{n-1}{2},F\right)$$ with equality if and only if $G\in \mathcal{I}\big(G^*(\frac{n-1}{2},F),G^*(\frac{n-1}{2},F)\big)$; and\\ (2) if $n$ is even, and $F$ does not contain cut edges, then the graph $G^\dag$ with the maximum second largest eigenvalue satisfies $$\lambda_2(G^\dag)=\rho^*\left(\frac{n}{2},F\right)-o(1)$$ and $G^\dag\in \mathcal{E}\big(H_1,H_2\big)$, where $H_1$ and $H_2$ are $F$-saturated graphs on $\frac{n}{2}$ vertices.

This allows us to explore the maximum value of $\lambda_2$ for valuable families of various $F$-free graphs, such as $F$ is a color critical graph or a minor.
In particular, other than a complete graph $K_{r+1}$, when $F$ is a book graph $B_{k+1}$ or an odd cycle $C_{2k+1}$, we are able to determine the maximum second largest eigenvalue for $F$-free connected graphs of given order, and completely characterize the extremal graphs.
We also describe the maximum second largest eigenvalue for planar graphs and outerplanar graphs.
\end{abstract}

\section{Introduction}\label{section:1}
The Ramsey number $r(s,t)$ is the smallest positive integer $n$ such that any graph $G$ of order $n$ contains a complete graph $K_s$ or a null graph $\overline{K_t}$ as an induced subgraph.
A landmark work by Ramsey \cite{R30} showed $r(s,t)$ is finite for each $r$ and $s$.
Bounding the value of $r(s,t)$ contributes to Erd\H{o}s and Szekeres \cite{ES35}, who seminally proved a non-trivial upper bound that $r(s,t)\le\left(\begin{matrix}
     s+t-2\\
     t-1
\end{matrix}\right)$, and Erd\H{o}s \cite{E47}, providing a lower bound for sufficiently large $s$ and $t$.
This shows that a graph of large order will contains either a large clique or a large independent set.

Given a graph $F$, a graph $G$ is called $F$-free if $G$ does not contain a copy of $F$ as a subgraph.
Denoted by $\lambda_1(G)\ge\lambda_2(G)\ge\cdots\ge\lambda_n(G)$ in non-increasing order the eigenvalues of adjacency matrix of the graph $G$.
The symbol $G$ may be omitted if no ambiguity.

In 1989, Chung, Graham and Wilson \cite{CGW89} presented a version of spectral theory on Ramsey number, showing that
a dense $K_{r}$-free graph of order $n$ and size $m\ge cn^2$, where $c\in (0,\frac{1}{2})$, satisfies that either $\lambda_2>c'n$ or $\lambda_n<c'n$, where $c'=c'(r,c)$ is a constant and $n$ is sufficiently large.
It is clear that there exists an independent set of large size in a sparse graph.
This work of Chung, Graham and Wilson implies that when $$\lambda:=\max\{|\lambda_2|,|\lambda_n|\}\le\Omega(n),$$ the graph will contain either a complete graph $K_{r'}$ or a null graph $\overline{K_{s'}}$, where $r'=r'(n,\lambda)$ and $s'=s'(n,\lambda)$.
In 2004, Bollob\'as and Nikiforov \cite{BN04} gave a direct refinement by using eigenvalue interlacing techniques, proving that a dense $K_r$-free graph always holds $\lambda_n\le -\alpha n$, where $\alpha$ is independent of $n$. 
A subsequent work by Nikiforov \cite{N06} further determined a more precise bound on $\lambda_n$, that
$\lambda_n\le -\frac{2^{r+1}m^r}{rn^{2r-1}}$ for a 
$K_{r+1}$-free graph of order $n$ and size $m$.
This motivates us to investigate how large $\lambda_2$ can be in $K_r$-free graphs of order $n$.

Let $T_{n,r}$ be the $r$-partite Tur\'an graph of order $n$, i.e., a complete multipartite graph in which each color set is of size $\lfloor\frac{n}{r}\rfloor$ or $\lceil\frac{n}{r}\rceil$. 
The largest eigenvalue $\lambda_1$ is also called the spectral radius of $G$, denoted by $\rho(G)$.
In Theorem \ref{thm:complete graph-2}, we give a complete answer to the above question, that is, among $K_{r+1}$-free connected graphs $G$ of order $n$, we prove
\begin{align*}
    \lambda_2(G)\le\begin{cases}
     \rho(T_{\frac{n-1}{2},r}),&\ \mathrm{if}\ n\ \mathrm{is} \ \mathrm{odd};\\
     \lambda_2(T_{\frac{n}{2},r}uvT_{\frac{n}{2},r}),&\ \mathrm{if}\ n\ \mathrm{is} \ \mathrm{even},
     \end{cases}
\end{align*}
where $T_{\frac{n}{2},r}uvT_{\frac{n}{2},r}$ is obtained from two copies of $T_{\frac{n}{2},r}$ by embedding an new edge between their vertices $u$ and $v$ of minimum degree.
And the family of graphs attaining the upper bounds is characterized completely. 
This means $\lambda_2(G)\le \left(1-\frac{1}{r}\right)\frac{n}{2}$, giving an estimate $c'<\frac{1}{2}-\frac{1}{2r}$ when $\lambda_2>c'n$ holds in \cite{CGW89}.
Meanwhile, this work is a generalization of known upper bounds on trees \cite{N82,S95} and bipartite graphs \cite{P88, ZLW12}, since a graph having $K_{r+1}$ as a subgraph is of chromatic number at least $r+1$.

In fact, it is not satisfied with merely giving the maximum value of $\lambda_2$ in $K_{r+1}$-free graphs.
We tend to find sharp upper bounds of $\lambda_2$ in general $F$-free graphs with given order.
This research direction also is extension of spectral counterpart of extremal theory about the spectral radius $\lambda_1$.

For decades, the study on spectral radius of graphs has received great attention.
The extremal number $\textrm{ex}(n, F)$, known as Tur\'an number, is the maximum number of edges in an $n$-vertex graph not containing $F$.
Due to the relation $\rho(G)\ge \frac{2m}{n}$ for a graph $G$ of order $n$ and size $m$ with its spectral radius $\rho(G)$,
results on maximizing the spectral radius $\rho(G)$ for $F$-free graphs $G$ could give an upper bound to $\textrm{ex}(n,F)$, that is,
$\textrm{ex}(n,F)\le \frac{n\rho^*}{2},$
provided that $\rho^*$ is the maximum $\rho(G)$ among all $F$-free graphs $G$ on $n$ vertices.
Much meaningfully, when the extremal graph with spectral radius $\rho^*$ is regular, the upper bound is sharp. So it can be viewed as an improvement of spectral version to extremal result on $\textrm{ex}(n,F)$.
One may refer to much nice literature for various $F$-free graphs, such as complete graphs $K_{r+1}$ \cite{BN07,N02,N07,W86}, complete bipartite graphs $K
_{s,t}$ \cite{BG09,N07}, cycles $C_k$ \cite{LN23,N08,NP20, ZL22} and graph-minors \cite{N17,T19, TT17, ZL22} and the references therein.

For two disjoint graphs $G$ and $H$, let $\mathcal{I}(G,H)$ be the family of connected graphs obtained from $G$ and $H$ by connecting a new vertex to some vertices of each $G$ and $H$.
And let $\mathcal{E}(G,H)$ be the family of graphs obtained from $G$ and $H$ by connecting $G$ and $H$ with a new edge.
In particular, if $u\in V(G)$ and $v\in V(H)$, denote by $GuvH$ the graph in $\mathcal{E}(G,H)$ when the added edge is $uv$.

Let $\rho^*(n,F)$ be the maximum spectral radius of $F$-free graphs on $n\ge n_F$ vertices, and $G^*(n,F)$ be an extremal graph with $\rho\big(G^*(n,F)\big)=\rho^*(n,F)$. 
We mark that $n$ is sufficiency large if $n_F$ is $\infty$.
A graph $G$ is $F$-saturated if $G$ is $F$-free but $G+e$ for any $e\notin E(G)$ contains a copy of $F$ as a subgraph. We extend the spectral version of extremal results on $\lambda_1$ to the second eigenvalue $\lambda_2$.
For an $F$-free connected graph $G$ of order $n\ge f(n_F)$, in Theorems \ref{thm:F-free and odd n} and \ref{thm:F-free and even n}, we prove that, when $n$ is odd,
$$\lambda_2(G)\le\rho^*\left(\frac{n-1}{2},F\right)$$
with the equality holds if and only if $G\in \mathcal{I}\big(G^*(\frac{n-1}{2},F),G^*(\frac{n-1}{2},F)\big)$; and when $n$ is even, and $F$ does not contain cut edges (this restriction is not absolute),
$$\lambda_2(G^\dag)=\rho^*\left(\frac{n}{2},F\right)-o(1),$$
where $G^\dag$ is a graph with the maximum value of $\lambda_2$.

In particular, let $F$ be a book graph $B_{r+1}$, i.e., $r+1$ triangles by sharing an edge, or an odd cycle $C_{2k+1}$ ($k\ge 1$), then
\begin{align*}
    \lambda_2(G)\le\begin{cases}
     \rho(T_{\frac{n-1}{2},2}),&\ \mathrm{if}\ n\ \mathrm{is} \ \mathrm{odd};\\
     \lambda_2(T_{\frac{n}{2},2}uvT_{\frac{n}{2},2}),&\ \mathrm{if}\ n\ \mathrm{is} \ \mathrm{even},
     \end{cases}
\end{align*}
and the graph attaining the upper bound belongs to the family $\mathcal{I}(T_{\frac{n-1}{2},2},T_{\frac{n-1}{2},2})$ for odd $n$, and is isomorphic to $T_{\frac{n}{2},2}uvT_{\frac{n}{2},2}$ for even $n$.
Recently, Brooks, Gu, Hyatt, Linz and Lu \cite{BGHLL25} investigated outerplanar graphs, and obtained the maximum value of $\lambda_2$ as well as the extremal graphs, when the order is sufficiently large.
It is noted that a graph is outerplanar if and only if it not contains a member of $\{K_4,K_{2,3}\}$ as a minor.
We get the upper bound of $\lambda_2$ for $K_r$-minor-free or $K_{s,t}$-minor-free graphs.
This also allow us to determine the maximum value of $\lambda_2$ for planar graphs, or outerplanar graphs (showing the result by Brooks, Gu, Hyatt, Linz and Lu \cite{BGHLL25} holds for any odd order and providing a tight maximum value for even order).

The rest of the paper is organized as follows.
In Section \ref{section:2}, we establish relationships of the second largest eigenvalues between a graph and the resulting graph after edge addition or deletion. 
And we introduce some critical lemmas in order to bound $\lambda_2$.
In Section \ref{section:3}, we first prove our main results (Theorems \ref{thm:F-free and odd n} and \ref{thm:F-free and even n}), giving the maximum value of $\lambda_2$ among $F$-free graphs, where $F$ is any graph not containing cut edges.
Then we apply these results to some special $F$-free graphs, such as (1) color critical graphs, including complete graphs, Book graphs and odd cycles, and (2) graph-minors, including $K_r$-minor and $K_{s,t}$-minor. 

\section{Preliminaries}\label{section:2}

The following theorem is the well known Courant-Fischer Theorem (see \cite{HJ85} for example).

\begin{theorem}\label{thm:C-F thm}[Courant-Fischer Theorem]
Let $M$ be a real and symmetric matrix with eigenvalues $\lambda_1\ge \lambda_2\ge\cdots\ge \lambda_n$. Then
\begin{equation*}
\lambda_k=\max_{\substack{\mathbb{S}\subseteq \mathbb{R}^n\\ \dim\mathbb{S}=k}}\min_{\textit{\textbf{x}}\in\mathbb{S}}\frac{\textit{\textbf{x}}^TM\textit{\textbf{x}}}{\textit{\textbf{x}}^T\textit{\textbf{x}}}=\min_{\substack{\mathbb{S}\subseteq \mathbb{R}^n\\ \dim\mathbb{S}=n-k+1}}\max_{\textit{\textbf{x}}\in\mathbb{S}}\frac{\textit{\textbf{x}}^TM\textit{\textbf{x}}}{\textit{\textbf{x}}^T\textit{\textbf{x}}}.
\end{equation*}
\end{theorem}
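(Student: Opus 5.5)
We prove the Courant--Fischer characterization via the spectral theorem. Since $M$ is real symmetric, we take an orthonormal basis of eigenvectors $\textit{\textbf{v}}_1,\dots,\textit{\textbf{v}}_n$ with $M\textit{\textbf{v}}_i=\lambda_i\textit{\textbf{v}}_i$. For $\textit{\textbf{x}}=\sum_i c_i\textit{\textbf{v}}_i\neq \textit{\textbf{0}}$ the Rayleigh quotient is
\[
R(\textit{\textbf{x}})=\frac{\textit{\textbf{x}}^TM\textit{\textbf{x}}}{\textit{\textbf{x}}^T\textit{\textbf{x}}}=\frac{\sum_i\lambda_ic_i^2}{\sum_i c_i^2},
\]
a weighted average of the eigenvalues with weights $c_i^2$. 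Hence $R(\textit{\textbf{x}})\ge\lambda_k$ whenever $\textit{\textbf{x}}\in\mathrm{span}\{\textit{\textbf{v}}_1,\dots,\textit{\textbf{v}}_k\}$, and $R(\textit{\textbf{x}})\le\lambda_k$ whenever $\textit{\textbf{x}}\in\mathrm{span}\{\textit{\textbf{v}}_k,\dots,\textit{\textbf{v}}_n\}$. Throughout, the minimum and maximum are over nonzero vectors in $\mathbb{S}$; they are attained because $R$ is continuous on the compact unit sphere of $\mathbb{S}$ and scale-invariant.

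For the max-min formula:
\begin{itemize}
\item \emph{Achievability.} Take $\mathbb{S}_0=\mathrm{span}\{\textit{\textbf{v}}_1,\dots,\textit{\textbf{v}}_k\}$. Then $\min_{\mathbb{S}_0}R\ge\lambda_k$, with equality at $\textit{\textbf{x}}=\textit{\textbf{v}}_k$. So the maximum over all $k$-dimensional subspaces is at least $\lambda_k$.
\item \emph{Upper bound.} Let $\mathbb{S}$ be any $k$-dimensional subspace and set $\mathbb{T}=\mathrm{span}\{\textit{\textbf{v}}_k,\dots,\textit{\textbf{v}}_n\}$, which has dimension $n-k+1$. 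By the dimension formula, $\dim(\mathbb{S}\cap\mathbb{T})\ge k+(n-k+1)-n=1$. So $\mathbb{S}$ contains a nonzero $\textit{\textbf{x}}\in\mathbb{T}$, and for it $R(\textit{\textbf{x}})\le\lambda_k$. Thus $\min_{\mathbb{S}}R\le\lambda_k$ for every such $\mathbb{S}$, and the max-min equals $\lambda_k$.
\end{itemize}

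For the min-max formula the argument is symmetric:
\begin{itemize}
\item \emph{Achievability.} Take $\mathbb{T}$ as above. Then $\max_{\mathbb{T}}R=\lambda_k$, attained at $\textit{\textbf{v}}_k$.
\item \emph{Lower bound.} Any subspace of dimension $n-k+1$ meets $\mathrm{span}\{\textit{\textbf{v}}_1,\dots,\textit{\textbf{v}}_k\}$ nontrivially, again by counting dimensions. On that common nonzero vector $R\ge\lambda_k$, so the maximum over the subspace is at least $\lambda_k$.
\end{itemize}
Alternatively, the min-max formula follows by applying the max-min formula to $-M$, whose $(n-k+1)$-th largest eigenvalue is $-\lambda_k$.

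No step is a real obstacle. The only substantive ingredient is the dimension-counting intersection argument, which needs the correct complementary dimensions $k$ and $n-k+1$. The rest is the weighted-average form of the Rayleigh quotient, which relies on the eigenbasis being orthonormal. That orthonormality is exactly where the symmetry of $M$ is used.
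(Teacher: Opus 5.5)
Your proof is correct: the weighted-average form of the Rayleigh quotient in an orthonormal eigenbasis, together with the dimension count $k+(n-k+1)-n=1$ for the intersection with the complementary eigenspace, gives both the max--min and min--max formulas, and your alternative reduction of the latter to $-M$ is also valid. The paper does not prove this statement; it only cites Horn and Johnson, and your spectral-theorem-plus-dimension-counting argument is the standard textbook proof of it.
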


The Perron-Frobenius theorem states there exists a positive eigenvalue corresponding to $\lambda_1(G)$ if $G$ is connected (which means $A(G)$ is irreducible).

When we fix a space $\mathbb{W}$ of dimension $2$, by Courant-Fischer Theorem, we know that
\begin{align}\label{eq:relation of lambda 2}
\lambda_2=\max_{\substack{\mathbb{S}\subseteq \mathbb{R}^n\\ \dim\mathbb{S}=2}}\min_{x\in\mathbb{S}}\frac{\emph{\textbf{x}}^TA(G)\emph{\textbf{x}}}{\emph{\textbf{x}}^T\emph{\textbf{x}}}\ge \min_{\emph{\textbf{x}}\in\mathbb{W}}\frac{\emph{\textbf{x}}^TA(G)\emph{\textbf{x}}}{\emph{\textbf{x}}^T\emph{\textbf{x}}}.
\end{align}

By Perron-Frobenious theorem, the largest eigenvalue $\lambda_1$ of $G$ has a non-negative or non-positive eigenvector $\emph{\textbf{y}}$.
Let $\textit{\textbf{z}}$ be an eigenvector of $G$ corresponding to $\lambda_2$.
Then the elements of $\textit{\textbf{z}}$ are possibly positive, negative and zero.
We note the fact that $$\lambda_2=\min_{\emph{\textbf{x}}\in \mathrm{span}{\{\emph{\textbf{y}},\emph{\textbf{z}}}\}}\frac{\emph{\textbf{x}}^TA(G)\emph{\textbf{x}}}{\emph{\textbf{x}}^T\emph{\textbf{x}}}.$$
Denote the sets of vertices corresponding to positive elements, negative elements and zero elements of $\textit{\textbf{z}}$, by $V_+$, $V_-$ and $V_0$, respectively.

Next, we present the effects on the second largest eigenvalue of $G$ by graph operations.

\begin{theorem}\label{thm: add edge in N+/N-}
Suppose that $u,v\in V_+$ or $u,v\in V_-$, and $uv$ is not an edge in $G$. Then $\lambda_2(G+uv)\ge \lambda_2(G)$.
In particular, if $G$ is connected, then $\lambda_2(G+uv)>\lambda_2(G)$.
\end{theorem}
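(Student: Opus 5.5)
We use the Courant–Fischer characterization \eqref{eq:relation of lambda 2} with the fixed two-dimensional space $\mathbb{W}=\mathrm{span}\{\textbf{y},\textbf{z}\}$. Here $\textbf{y}$ is a nonnegative Perron eigenvector of $A(G)$ and $\textbf{z}$ is an eigenvector for $\lambda_2(G)$ whose sign pattern defines $V_+,V_-,V_0$. If $\lambda_1(G)=\lambda_2(G)$, we choose $\textbf{y},\textbf{z}$ orthogonal. We may also take $\textbf{y}\perp\textbf{z}$ in general, since eigenvectors of distinct eigenvalues of a symmetric matrix are orthogonal.

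For $\textbf{x}=a\textbf{y}+b\textbf{z}$ we have
\[
\textbf{x}^TA(G+uv)\textbf{x}=\textbf{x}^TA(G)\textbf{x}+2x_ux_v .
\]
We first show that $x_ux_v\ge0$ need not hold for every $\textbf{x}\in\mathbb{W}$, so a more careful argument is needed. The useful identity is
\[
\textbf{x}^TA(G)\textbf{x}-\lambda_2\textbf{x}^T\textbf{x}=a^2(\lambda_1-\lambda_2)\|\textbf{y}\|^2\ge 0 .
\]
The slack $a^2(\lambda_1-\lambda_2)\|\textbf{y}\|^2$ must therefore absorb a possibly negative $2x_ux_v$. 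This is the main obstacle.

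We avoid it by changing the test space. The eigenvectors of $A(G)$ for eigenvalues at least $\lambda_2$ span a space containing $\textbf{z}$ and $\textbf{y}$, and any two-dimensional subspace on which the Rayleigh quotient of $A(G+uv)$ is at least $\lambda_2$ suffices. We take $\mathbb{W}$ as above and write
\[
R(\textbf{x})=\frac{\textbf{x}^TA(G)\textbf{x}-\lambda_2\|\textbf{x}\|^2+2x_ux_v}{\|\textbf{x}\|^2}.
\]
Since $z_u z_v>0$, the quadratic form $2x_ux_v$ is positive in the direction $\textbf{z}$. Because $\mathbb{W}$ is two-dimensional, we reduce the claim to a $2\times 2$ matrix inequality. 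The form $Q(a,b)=a^2(\lambda_1-\lambda_2)\|\textbf{y}\|^2+2(ay_u+bz_u)(ay_v+bz_v)$ must be positive semidefinite. Its diagonal entries, $(\lambda_1-\lambda_2)\|\textbf{y}\|^2+2y_uy_v\ge0$ and $2z_uz_v>0$, are nonnegative. The determinant condition, however, can fail, so PSD is not automatic.

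The cleaner route is therefore the min–max form with codimension. Take $\mathbb{S}$ of dimension $n-1$ achieving $\lambda_2(G+uv)=\min_{\dim\mathbb{S}=n-1}\max_{\textbf{x}\in\mathbb{S}}R_{G+uv}(\textbf{x})$. Any such $\mathbb{S}$ meets the two-dimensional space $\mathbb{W}$ nontrivially, but we want a better vector. Given $\mathbb{S}=\textbf{w}^\perp$, we choose $\textbf{x}\in\mathbb{W}\cap\textbf{w}^\perp$. If this $\textbf{x}$ has $x_ux_v<0$, we instead adjust within the space. We consider $\mathbb{W}'=\mathrm{span}\{\textbf{y}',\textbf{z}\}$, where $\textbf{y}'$ is the Perron vector of $G+uv$, and use Rayleigh monotonicity: $\textbf{y}'^TA(G+uv)\textbf{y}'=\lambda_1(G+uv)\ge\lambda_1(G)$. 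We plan to combine this with $\textbf{z}^TA(G+uv)\textbf{z}=\lambda_2\|\textbf{z}\|^2+2z_uz_v$ and the orthogonality structure, and to verify the $2\times2$ determinant inequality by hand. This determinant verification is the step we expect to be hardest.

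For the strict part, when $G$ is connected we use $z_uz_v>0$. If equality $\lambda_2(G+uv)=\lambda_2(G)$ held, the extremal vector in the test space would be an eigenvector of $A(G+uv)$ for $\lambda_2(G)$ while also satisfying the Rayleigh equality for $A(G)$. Subtracting the two eigen-equations gives $x_v=x_u=0$ at the coordinates $u$ and $v$. Comparing with the structure of $\mathbb{W}$, this forces a contradiction with $z_u,z_v\neq0$ and the positivity of $\textbf{y}$, which Perron–Frobenius guarantees for connected $G$.
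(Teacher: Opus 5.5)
Your proposal has a genuine gap: it never establishes $\lambda_2(G+uv)\ge\lambda_2(G)$. Your diagnosis is correct. On $\mathbb W=\mathrm{span}\{\mathbf y,\mathbf z\}$, with $\mathbf y,\mathbf z$ orthonormal, the form $\mathbf x^T\big(A(G+uv)-\lambda_2I\big)\mathbf x$ has matrix
\[
\begin{pmatrix}\lambda_1-\lambda_2+2y_uy_v & y_uz_v+y_vz_u\\ y_uz_v+y_vz_u & 2z_uz_v\end{pmatrix}.
\]
This matrix is positive semidefinite iff $2(\lambda_1-\lambda_2)z_uz_v\ge (y_uz_v-y_vz_u)^2$, and nothing guarantees that. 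The paper's proof passes over exactly this point. It asserts \eqref{eq:connected-1}, i.e.\ $2x_ux_v\ge0$ on all of $\mathbb W$. In the $V_-$ case it ``replaces $\mathbf y$ by $-\mathbf y$'', which leaves $\mathbb W$ unchanged and so cannot control the sign of $k_1k_2$ over the minimization. After identifying the obstacle, however, you only gesture at the codimension form and at $\mathrm{span}\{\mathbf y',\mathbf z\}$, and leave the decisive $2\times2$ check undone. With $\mathbf y'$, the same computation gives the condition $2(\lambda_1(G+uv)-\lambda_2)z_uz_v\ge(y'_uz_v+y'_vz_u)^2$, which is not automatic either when $z_u$ is small. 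Your strictness paragraph also presupposes an extremal vector in a test space that was never shown to work.

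The missing idea is a different test plane: the positive and negative parts $\mathbf z^+=\max(\mathbf z,\mathbf 0)$ and $\mathbf z^-=\min(\mathbf z,\mathbf 0)$, taken entrywise. By swapping $\mathbf z$ and $-\mathbf z$, assume $u,v\in V_+$. For connected $G$, $\mathbf z\perp\mathbf y>\mathbf 0$ forces $\mathbf z^-\ne\mathbf 0$, so $\mathbf z^+,\mathbf z^-$ span a plane. Put $c=-(\mathbf z^+)^TA(G)\mathbf z^-\ge0$. Summing the eigen-equations over each support gives $(\mathbf z^\pm)^TA(G)\mathbf z^\pm=\lambda_2\|\mathbf z^\pm\|^2+c$. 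Since $u,v$ lie only in the support of $\mathbf z^+$, for $\mathbf x=\alpha\mathbf z^++\beta\mathbf z^-$ we get
\[
\mathbf x^T\big(A(G+uv)-\lambda_2I\big)\mathbf x=c(\alpha-\beta)^2+2\alpha^2z_uz_v\ge0 ,
\]
and \eqref{eq:relation of lambda 2} then gives $\lambda_2(G+uv)\ge\lambda_2(G)$.

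For strictness, suppose equality holds. Take a nonzero $\mathbf x$ in this plane orthogonal to the Perron vector of $G+uv$. It must be a $\lambda_2$-eigenvector of $A(G+uv)$ with the form above equal to $0$, which forces $\alpha=0$. Then $\mathbf z^-$, which vanishes at $u$ and $v$, is a $\lambda_2$-eigenvector of $A(G)$. Hence so is $\mathbf z^+=\mathbf z-\mathbf z^-\ge\mathbf 0$, contradicting Perron--Frobenius.

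Some sign change in $\mathbf z$ is genuinely needed for the non-strict part as literally stated. For $G=2K_2$ and $\mathbf z=\mathbf 1$, joining the two edges gives $P_4$, whose $\lambda_2=(\sqrt5-1)/2<1$.
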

\begin{proof}
Let $\emph{\textbf{y}}$ and $\emph{\textbf{z}}$ be unit eigenvectors of $G$ with respect to $\lambda_1(G)$ and $\lambda_2(G)$, respectively. That is,
\begin{center}
$\lambda_1(G)=\emph{\textbf{y}}^TA(G)\emph{\textbf{y}}$\ \ and
$\lambda_2(G)=\emph{\textbf{z}}^TA(G)\emph{\textbf{z}}$.
\end{center}
We note $\emph{\textbf{y}}^T\emph{\textbf{z}}=0$.
Let $\mathbb{W}=\mathrm{span}{\{\emph{\textbf{y}},\emph{\textbf{z}}}\}$, then for any unit vector $\emph{\textbf{x}}\in \mathbb{W}$, we have $\emph{\textbf{x}}=k_1\emph{\textbf{y}}+k_2\emph{\textbf{z}}$ with $k_1^2+k_2^2=1$.


We first consider the case that $u,v\in V_+$. 
Then
\begin{align}
    \emph{\textbf{x}}^TA(G+uv)\emph{\textbf{x}}&=(k_1\emph{\textbf{y}}+k_2\emph{\textbf{z}})^TA(G+uv)(k_1\emph{\textbf{y}}+k_2\emph{\textbf{z}})\nonumber\\
    &=k_1^2\emph{\textbf{y}}^TA(G)\emph{\textbf{y}}+k_2^2\emph{\textbf{z}}^TA(G)\emph{\textbf{z}}+2k_1^2\emph{\textbf{y}}_u\emph{\textbf{y}}_v+2k_2^2\emph{\textbf{z}}_u\emph{\textbf{z}}_v+2k_1k_2(\emph{\textbf{y}}_u\emph{\textbf{z}}_v+\emph{\textbf{y}}_v\emph{\textbf{z}}_u)\label{eq:sign of a term}\\
    &\ge k_1^2\emph{\textbf{y}}^TA(G)\emph{\textbf{y}}+k_2^2\emph{\textbf{z}}^TA(G)\emph{\textbf{z}}\label{eq:connected-1}\\
    &=k_1^2\lambda_1(G)+k_2^2\lambda_2(G)\nonumber\\
    &\ge\lambda_2(G).\label{eq:connected-2}
\end{align}
Thus, by Eq.\,(\ref{eq:relation of lambda 2}),
$$\lambda_2(G+uv)\ge\min_{\emph{\textbf{x}}\in\mathbb{W}}\frac{\emph{\textbf{x}}^TA(G+uv)\emph{\textbf{x}}}{\emph{\textbf{x}}^T\emph{\textbf{x}}}\ge \lambda_2(G).$$

For the case that $u,v\in V_-$, the process is similar to the above, except determining the sign of $2k_1k_2(\emph{\textbf{y}}_u\emph{\textbf{z}}_v+\emph{\textbf{y}}_v\emph{\textbf{z}}_u)$ in Eq.\,(\ref{eq:sign of a term}).
If $2k_1k_2(\emph{\textbf{y}}_u\emph{\textbf{z}}_v+\emph{\textbf{y}}_v\emph{\textbf{z}}_u)\ge 0$, there is nothing to prove. 
If $2k_1k_2(\emph{\textbf{y}}_u\emph{\textbf{z}}_v+\emph{\textbf{y}}_v\emph{\textbf{z}}_u)<0$, we replace $\emph{\textbf{y}}$ with $-\emph{\textbf{y}}$, and then we get the proof of $\lambda_2(G+uv)\ge \lambda_2(G)$.

Suppose that the graph $G$ is connected. 
Then the equality $\lambda_2(G+uv)= \lambda_2(G)$ indicates that all inequalities in Eqs.\,(\ref{eq:connected-1}) and (\ref{eq:connected-2}) turn into equalities.
We know $\lambda_1(G)>\lambda_2(G)$ from Perron-Frobenius theorem. 
So $k_1=0$ and $2k_2^2\emph{\textbf{z}}_u\emph{\textbf{z}}_v=0$. 
Mention that $k_1^2+k_2^2=1$.
We have $k_2=1$, which deduces $\emph{\textbf{z}}_u\emph{\textbf{z}}_v=0$.
This is a contradiction to the assumption $u,v\in V_+$ or $V_-$.
Therefore, we have $\lambda_2(G+uv)>\lambda_2(G)$.

This completes the proof.
\end{proof}

The following two well-known conclusions may refer to literature (see \cite{CRS10} for example).
\begin{lemma}\cite{CRS10}\label{lm:subgraph}
Let $H$ be a subgraph of $G$, then $\lambda_1(G)\ge \lambda_1(H)$.
\end{lemma}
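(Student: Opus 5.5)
The plan is to prove Lemma \ref{lm:subgraph} with the Rayleigh quotient form of Theorem \ref{thm:C-F thm} for $k=1$, namely $\lambda_1(M)=\max_{\textit{\textbf{x}}\neq 0}\textit{\textbf{x}}^TM\textit{\textbf{x}}/\textit{\textbf{x}}^T\textit{\textbf{x}}$, together with the nonnegativity of a Perron eigenvector. Let $H$ be a subgraph of $G$, with $V(H)\subseteq V(G)$ and $E(H)\subseteq E(G)$. First I reduce to the spanning case. Let $H'$ be the graph on $V(G)$ with edge set $E(H)$, i.e. $H$ plus the isolated vertices of $V(G)\setminus V(H)$. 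Then $A(H')$ is $A(H)\oplus O$ after a permutation of coordinates, so its spectrum is that of $A(H)$ together with extra zeros. Since $\lambda_1(H)\ge 0$ (the trace of $A(H)$ is $0$, so the largest eigenvalue is nonnegative), we get $\lambda_1(H')=\lambda_1(H)$.

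Next I take a unit eigenvector $\textit{\textbf{y}}$ of $A(H')$ for $\lambda_1(H')$. By the Perron--Frobenius theorem applied to the nonnegative matrix $A(H')$ (not assumed irreducible), $\textit{\textbf{y}}$ can be chosen entrywise nonnegative. Then
\begin{align*}
\lambda_1(H)=\textit{\textbf{y}}^TA(H')\textit{\textbf{y}}=2\sum_{uv\in E(H)}\textit{\textbf{y}}_u\textit{\textbf{y}}_v\le 2\sum_{uv\in E(G)}\textit{\textbf{y}}_u\textit{\textbf{y}}_v=\textit{\textbf{y}}^TA(G)\textit{\textbf{y}}\le\lambda_1(G).
\end{align*}
The first inequality holds because every term $\textit{\textbf{y}}_u\textit{\textbf{y}}_v$ is nonnegative and $E(H)\subseteq E(G)$. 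The second inequality is Theorem \ref{thm:C-F thm} with $k=1$.

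The only point needing care is the existence of a nonnegative Perron vector when $H$ is disconnected. I would handle it directly rather than quote Perron--Frobenius. For any unit vector $\textit{\textbf{x}}$, the vector $|\textit{\textbf{x}}|$ of absolute values is also a unit vector, and $|\textit{\textbf{x}}|^TA|\textit{\textbf{x}}|\ge \textit{\textbf{x}}^TA\textit{\textbf{x}}$ because $A$ has nonnegative entries. So if $\textit{\textbf{x}}$ attains the maximum of the Rayleigh quotient, $|\textit{\textbf{x}}|$ attains it as well, and a vector attaining the maximum of the Rayleigh quotient of a symmetric matrix is an eigenvector for $\lambda_1$. This removes any reliance on connectivity, and the lemma follows.
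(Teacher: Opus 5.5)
Your proof is correct and complete. It pads $H$ with isolated vertices (justified by $\lambda_1(H)\ge 0$), replaces a Rayleigh-quotient maximizer by its entrywise absolute value to get a nonnegative extremal vector without assuming connectivity, and compares $\sum_{uv\in E(H)}$ with $\sum_{uv\in E(G)}$; this is exactly the standard argument. The paper gives no proof of its own and simply quotes the lemma from \cite{CRS10}, where it is proved in essentially this way.

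The same computation also gives strict inequality when $G$ is connected and $H$ is a proper subgraph. Equality would force your nonnegative vector to be a $\lambda_1(G)$-eigenvector, hence a positive Perron vector of $G$, and then any edge of $G$ missing from $H$ would contribute a positive term. The paper tacitly relies on this strict form later, e.g.\ in the proof of Theorem~\ref{thm:F-free and even n}.
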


\begin{lemma}\cite{CRS10}\label{lm:Cauchy interlacing theorem}
Let $H$ be an induced subgraph of a graph $G$.
Then, for $1\le i\le |V(H)|$,
$$\lambda_i(G)\ge\lambda_i(H)\ge\lambda
_{i+|V(G)|-|V(H)|}(G).$$
\end{lemma}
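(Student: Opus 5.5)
This is Cauchy interlacing for an induced subgraph, which the paper cites from \cite{CRS10}. I would prove it directly from Theorem \ref{thm:C-F thm}.

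\textbf{Setup.} Let $n=|V(G)|$ and $m=|V(H)|$. Order the vertices so that $V(H)$ comes first. Then $A(H)$ is the leading $m\times m$ principal submatrix of $A(G)$, and this is exactly where the hypothesis that $H$ is \emph{induced} is used. Let $P:\mathbb{R}^m\to\mathbb{R}^n$ be the embedding that pads a vector with zeros. It is an isometry, and for every $\textbf{x}\in\mathbb{R}^m$,
\[
(P\textbf{x})^TA(G)(P\textbf{x})=\textbf{x}^TA(H)\textbf{x}.
\]
So the Rayleigh quotient of $A(H)$ at $\textbf{x}$ equals the Rayleigh quotient of $A(G)$ at $P\textbf{x}$.

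\textbf{Left inequality $\lambda_i(G)\ge\lambda_i(H)$.} Use the max--min form of Theorem \ref{thm:C-F thm}. Take an $i$-dimensional subspace $\mathbb{S}\subseteq\mathbb{R}^m$ that attains $\lambda_i(H)$. Since $P$ is injective, $P\mathbb{S}$ is an $i$-dimensional subspace of $\mathbb{R}^n$. Its minimum Rayleigh quotient for $A(G)$ equals the minimum over $\mathbb{S}$ for $A(H)$, which is $\lambda_i(H)$. Because $\lambda_i(G)$ is the maximum over all $i$-dimensional subspaces, $\lambda_i(G)\ge\lambda_i(H)$.

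\textbf{Right inequality $\lambda_i(H)\ge\lambda_{i+n-m}(G)$.} Use the min--max form, which gives
\[
\lambda_i(H)=\min_{\dim\mathbb{S}=m-i+1}\ \max_{\textbf{x}\in\mathbb{S}}\frac{\textbf{x}^TA(H)\textbf{x}}{\textbf{x}^T\textbf{x}}.
\]
Take a minimizing $\mathbb{S}$. Then $P\mathbb{S}$ has dimension $m-i+1=n-(i+n-m)+1$, which is exactly the dimension appearing in the min--max formula for $\lambda_{i+n-m}(G)$. Hence $\lambda_{i+n-m}(G)\le\max_{P\mathbb{S}}=\lambda_i(H)$.

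\textbf{Obstacles.} There is no real obstacle. The only points to watch are the index bookkeeping $k=i+n-m$, which lies in $[1,n]$ whenever $1\le i\le m$, and the fact that zero-padding preserves both dimension and Rayleigh quotients. The latter is precisely where inducedness of $H$ matters: for a non-induced subgraph, the quadratic forms would differ by the missing edges.
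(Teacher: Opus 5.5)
Your proof is correct and complete. Because $H$ is induced, $A(H)$ is a principal submatrix of $A(G)$, and zero-padding preserves both dimension and Rayleigh quotients; the index check $n-(i+n-m)+1=m-i+1$ then makes the min--max form of Theorem~\ref{thm:C-F thm} yield exactly the lower bound $\lambda_i(H)\ge\lambda_{i+n-m}(G)$. The paper gives no proof of its own for this lemma and simply cites \cite{CRS10}, so there is no argument to compare against; yours is the standard Courant--Fischer proof of Cauchy interlacing.
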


\begin{theorem}\label{thm:remove edge between N+ and N-}
Suppose that $u\in V_+$, $v\in V_-$ and $uv$ is an edge in $G$. Then $\lambda_2(G-uv)> \lambda_2(G)$.
\end{theorem}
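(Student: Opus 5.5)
The plan is to mimic the proof of Theorem~\ref{thm: add edge in N+/N-}. The starting point is the identity
$$\textbf{x}^TA(G-uv)\textbf{x}=\textbf{x}^TA(G)\textbf{x}-2x_ux_v ,$$
which holds for every vector $\textbf{x}$. For the unit eigenvector $\textbf{z}$ of $\lambda_2(G)$, the hypothesis $u\in V_+$, $v\in V_-$ gives $z_uz_v<0$. Hence
$$\textbf{z}^TA(G-uv)\textbf{z}=\lambda_2(G)+\delta,\qquad \delta:=-2z_uz_v>0 .$$
So $\textbf{z}$ alone already beats $\lambda_2(G)$ strictly. What remains is to find a second vector $\textbf{w}$ such that the Rayleigh quotient of $A(G-uv)$ stays above $\lambda_2(G)$ on all of $\mathrm{span}\{\textbf{w},\textbf{z}\}$. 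Then Eq.\,(\ref{eq:relation of lambda 2}), applied to $G-uv$, finishes the proof.

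The naive choice $\textbf{w}=\textbf{y}$, the Perron vector of $G$, does not work directly. For $\textbf{x}=k_1\textbf{y}+k_2\textbf{z}$ the extra term
$$-2x_ux_v=-2k_1^2y_uy_v-2k_2^2z_uz_v-2k_1k_2(y_uz_v+y_vz_u)$$
contains $-2k_1^2y_uy_v\le 0$, which has the wrong sign. So I would instead take $\textbf{w}$ to be a unit nonnegative Perron vector of $G-uv$, with eigenvalue $\lambda_1':=\lambda_1(G-uv)$. Orthogonalize by setting $c=\textbf{w}^T\textbf{z}$ and $\textbf{z}'=\textbf{z}-c\textbf{w}$. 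Because $A(G-uv)\textbf{w}=\lambda_1'\textbf{w}$ and $\textbf{z}'\perp\textbf{w}$, the cross terms vanish. This yields
$$(a\textbf{w}+b\textbf{z}')^TA(G-uv)(a\textbf{w}+b\textbf{z}')=a^2\lambda_1'+b^2\big(\lambda_2(G)+\delta-c^2\lambda_1'\big),$$
with norm squared $a^2+b^2(1-c^2)$. The minimum of the Rayleigh quotient on this plane is therefore
$$\min\Big\{\lambda_1',\ \frac{\lambda_2(G)+\delta-c^2\lambda_1'}{1-c^2}\Big\}.$$
We have $\lambda_1'\ge\lambda_2(G)+\delta>\lambda_2(G)$, by the variational characterization of $\lambda_1'$ applied to $\textbf{z}$. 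So it suffices to prove
$$\delta>c^2\big(\lambda_1'-\lambda_2(G)\big). \qquad (\ast)$$

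The main obstacle is $(\ast)$, that is, controlling $c=\textbf{w}^T\textbf{z}$. I would first use $\textbf{y}^T\textbf{z}=0$ together with $A(G)\textbf{z}=\lambda_2\textbf{z}$. Computing $\textbf{w}^TA(G)\textbf{z}$ in two ways gives
$$(\lambda_1'-\lambda_2)\,c=-(w_uz_v+w_vz_u).$$
Hence $c^2(\lambda_1'-\lambda_2)=(w_uz_v+w_vz_u)^2/(\lambda_1'-\lambda_2)$, and $(\ast)$ reduces to
$$(w_uz_v+w_vz_u)^2<2|z_uz_v|\,(\lambda_1'-\lambda_2).$$
I would try to establish this from $|w_u|,|w_v|\le1$ and $\lambda_1'-\lambda_2\ge\delta$. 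If this bound is too weak in general, my fallback is a continuity argument along $A(t)=A(G)-tE_{uv}$ for $t\in[0,1]$, where $E_{uv}$ is the symmetric matrix with ones exactly in positions $(u,v)$ and $(v,u)$. By Hellmann--Feynman, the eigenvalue branch through $\lambda_2(G)$ has derivative $-\textbf{z}^TE_{uv}\textbf{z}=\delta>0$ at $t=0$. The remaining task is to check that the second largest eigenvalue of $A(t)$ never falls below its starting value. For this I would use interlacing by the rank-two perturbation: $tE_{uv}$ has one positive and one negative eigenvalue, so $\lambda_2(A(t))\ge\lambda_3(A(G))$. Combined with the strict first-order increase, this should give the strict inequality $\lambda_2(G-uv)>\lambda_2(G)$.
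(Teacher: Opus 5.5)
There is a genuine gap, and it sits at $(\ast)$. Your reduction to $(\ast)$ is correct, but $(\ast)$ is not just unproved: it is false in general. So no argument confined to the plane $\mathrm{span}\{\mathbf w,\mathbf z\}$ can work.

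\textbf{Why $(\ast)$ fails.} The bounds you propose give only $2|\mathbf z_u\mathbf z_v|(\lambda_1'-\lambda_2)\ge 4\mathbf z_u^2\mathbf z_v^2$. The left side $(\mathbf w_u\mathbf z_v+\mathbf w_v\mathbf z_u)^2$, however, can be of order $\mathbf w_v^2\mathbf z_u^2$ when $|\mathbf z_v|\ll\mathbf z_u$, and this really happens. Let $S$ be the tree consisting of a vertex $u$ with three pendant neighbours and a pendant path $uab$. Join $u$ to a leaf $v$ of a disjoint star $K_{1,5}$.
\begin{itemize}
\item $G-uv=S\cup K_{1,5}$ has simple top eigenvalue $\lambda_1'=\sqrt5$. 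Hence $\mathbf w$ is forced to live on the star, with $\mathbf w_v=1/\sqrt{10}$ and $\mathbf w_u=0$.
\item Schwenk's formula gives $\phi(G,\lambda)=\lambda^6(\mu^3-11\mu^2+33\mu-19)$ with $\mu=\lambda^2$, so $\lambda_2(G)\approx 2.036$.
\item The eigen-equations on the star give $\mathbf z_v/\mathbf z_u=(\mu-4)/(\lambda(\mu-5))\approx -0.084$.
\item Hence $\delta\approx 0.17\,\mathbf z_u^2$. Your own identity gives $c^2(\lambda_1'-\lambda_2)=\mathbf w_v^2\mathbf z_u^2/(\lambda_1'-\lambda_2)\approx 0.50\,\mathbf z_u^2$.
\item The minimum of the Rayleigh quotient of $A(G-uv)$ on your plane is therefore about $1.90<\lambda_2(G)$, even though $\lambda_2(G-uv)=\rho(S)\approx 2.074$.
\end{itemize}

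\textbf{The paper has the same gap.} It uses this same plane and disposes of the cross term by ``replacing $\mathbf y$ with $-\mathbf y$'' to get $k_1k_2\,\mathbf y^T\mathbf z\ge 0$. For a fixed $\mathbf x$, that replacement also flips $k_1$, so the sign of $k_1k_2\,\mathbf y^T\mathbf z$ never changes. The vectors with $k_1k_2\,\mathbf y^T\mathbf z<0$ need exactly your $(\ast)$. You located the real difficulty correctly; it just cannot be overcome in this plane.

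\textbf{The fallback does not close the gap.} Hellmann--Feynman gives an increase only for small $t$. The rank-two interlacing gives only $\lambda_2(A(t))\ge\lambda_3(G)$, which says nothing about staying above $\lambda_2(G)$. Both facts hold for the weighted matrix with $A_{12}=A_{21}=\frac14$, $A_{33}=-100$ and all other entries $0$. There $\lambda_2(A)=-\frac14$ with eigenvector $(1,-1,0)^T/\sqrt2$, and the derivative at $t=0$ is $1$. Yet $\lambda_2(A-E_{uv})=-\frac34$ for $(u,v)=(1,2)$. So any valid proof must use that the $(u,v)$ entry drops from $1$ exactly to $0$.

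\textbf{The missing idea.} Test on $\mathrm{span}\{\mathbf z^+,\mathbf z^-\}$ instead, where $\mathbf z=\mathbf z^+-\mathbf z^-$ with $\mathbf z^\pm\ge\mathbf 0$ supported on $V_\pm$.
\begin{itemize}
\item Put $\sigma=(\mathbf z^+)^TA(G)\mathbf z^-=\sum\mathbf z_i|\mathbf z_j|$ over edges $ij$ with $i\in V_+$, $j\in V_-$. Then $\sigma\ge\mathbf z_u|\mathbf z_v|>0$.
\item Pairing $A(G)\mathbf z=\lambda_2\mathbf z$ with $\mathbf z^\pm$ gives $(\mathbf z^\pm)^TA(G)\mathbf z^\pm=\lambda_2\|\mathbf z^\pm\|^2+\sigma$.
\item Deleting $uv$ leaves these diagonal terms unchanged and turns the cross term into $\sigma-\mathbf z_u|\mathbf z_v|\ge 0$. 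This is exactly where the unit edge weight is used.
\end{itemize}
Hence for $\mathbf x=a\mathbf z^++b\mathbf z^-\ne\mathbf 0$,
\[
\mathbf x^TA(G-uv)\mathbf x-\lambda_2(G)\|\mathbf x\|^2=\sigma(a^2+b^2)+2ab\bigl(\sigma-\mathbf z_u|\mathbf z_v|\bigr)\ge\mathbf z_u|\mathbf z_v|(a^2+b^2)>0.
\]
Courant--Fischer then yields $\lambda_2(G-uv)>\lambda_2(G)$, with no connectivity assumption needed.
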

\begin{proof}
Let $\emph{\textbf{y}}$ be a unit eigenvector of $G-uv$ with respect to $\lambda_1(G-uv)$, and $\emph{\textbf{z}}$ be a unit eigenvector of $G$ with respect to $\lambda_2(G)$.
Then we have
\begin{equation}\label{eq:combi}
\begin{cases}
&\emph{\textbf{y}}^TA(G-uv)\emph{\textbf{y}}=\lambda_1(G-uv)\emph{\textbf{y}}^T\emph{\textbf{y}}=\lambda_1(G-uv),\\
&\emph{\textbf{y}}^TA(G-uv)\emph{\textbf{z}}=\lambda_1(G-uv)\emph{\textbf{y}}^T\emph{\textbf{z}},\\
&\emph{\textbf{z}}^TA(G-uv)\emph{\textbf{z}}=\emph{\textbf{z}}^TA(G)\emph{\textbf{z}}-2\emph{\textbf{z}}_u\emph{\textbf{z}}_v=\lambda_2(G)-2\emph{\textbf{z}}_u\emph{\textbf{z}}_v>\lambda_2(G).
\end{cases}
\end{equation}

Let $\mathbb{W}=\mathrm{span}{\{\emph{\textbf{y}},\emph{\textbf{z}}}\}$. We claim that $\dim(\mathbb{W})=2$.
On the contrary, suppose that $\emph{\textbf{y}}$ and $\emph{\textbf{z}}$ are collinear.
Then $\emph{\textbf{z}}$ is a unit eigenvector of $G-uv$ with respect to $\lambda_1(G-uv)$.
By Eq.\,(\ref{eq:combi}) we have
$$\lambda_2(G)<\emph{\textbf{z}}^TA(G-uv)\emph{\textbf{z}}=\lambda_1(G-uv).$$
It is clear that $A(G)=A(G-uv)+\textbf{e}_u\textbf{e}^T_v+\textbf{e}_v\textbf{e}^T_u$, where $\textbf{e}_u$ and $\textbf{e}_v$ are normal vectors with $u$-component one and $v$-component one, respectively.
Then
$A(G)\emph{\textbf{z}}=A(G-uv)\emph{\textbf{z}}+\left(\textbf{e}_u\textbf{e}^T_v+\textbf{e}_v\textbf{e}^T_u\right)\emph{\textbf{z}}$, which indicates
$$\left(\lambda_2(G)-\lambda_1(G-uv)\right)\emph{\textbf{z}}=\left[\textbf{0}^T\ \ \emph{\textbf{z}}_v\ \ \textbf{0}^T\ \ \emph{\textbf{z}}_u\ \ \textbf{0}^T\right]^T.$$
This gives that $\emph{\textbf{z}}_w=0$ for any vertex $w\in V(G)\setminus\{u,v\}$.
Since $u$ and $v$ is not an edge in $G-uv$. 
From $A(G-uv)\emph{\textbf{z}}=\lambda_1(G-uv)\emph{\textbf{z}}$, we get $\emph{\textbf{z}}_u=\emph{\textbf{z}}_v=0$.
So $\emph{\textbf{z}}=\textbf{0}$, a contradiction.
Thus $\dim(\mathbb{W})=2$.

By Lemmas \ref{lm:subgraph} and \ref{lm:Cauchy interlacing theorem}, we have $\lambda_1(G-uv)\ge\lambda_1(G-u)\ge\lambda_2(G)$.
For any nonzero vector $\emph{\textbf{x}}\in \mathbb{W}$, we have $\emph{\textbf{x}}=k_1\emph{\textbf{y}}+k_2\emph{\textbf{z}}$ and
   $$\emph{\textbf{x}}^T\emph{\textbf{x}}=k_1^2\emph{\textbf{y}}^T\emph{\textbf{y}}+k_2^2\emph{\textbf{z}}^T\emph{\textbf{z}}+2k_1k_2\emph{\textbf{y}}^T\emph{\textbf{z}}=k_1^2+k_2^2+2k_1k_2\emph{\textbf{y}}^T\emph{\textbf{z}}.$$
And we may let $k_1k_2\emph{\textbf{y}}^T\emph{\textbf{z}}\ge 0$ by replacing $\emph{\textbf{y}}$ as $-\emph{\textbf{y}}$ if $k_1k_2\emph{\textbf{y}}^T\emph{\textbf{z}}\le 0$.
Then, by Eq.\,(\ref{eq:combi}), we obtain
\begin{align}
    \emph{\textbf{x}}^TA(G-uv)\emph{\textbf{x}}&=(k_1\emph{\textbf{y}}+k_2\emph{\textbf{z}})^TA(G-uv)(k_1\emph{\textbf{y}}+k_2\emph{\textbf{z}})\nonumber\\
    &=(k_1\emph{\textbf{y}}+k_2\emph{\textbf{z}})^T\big(k_1\lambda_1(G-uv)\emph{\textbf{y}}+k_2A(G-uv)\emph{\textbf{z}}\big)\nonumber\\
    &=k_1^2\lambda_1(G-uv)+k_2^2\emph{\textbf{z}}^TA(G-uv)\emph{\textbf{z}}+2k_1k_2\lambda_1(G-uv)\emph{\textbf{y}}^T\emph{\textbf{z}}\nonumber\\
    &\ge k_1^2\lambda_1(G-uv)+k_2^2\lambda_2(G)+2k_1k_2\lambda_1(G-uv)\emph{\textbf{y}}^T\emph{\textbf{z}}\label{eq:+-1}\\
    &\ge (k_1^2+k_2^2)\lambda_2(G)+2k_1k_2\lambda_2(G)\emph{\textbf{y}}^T\emph{\textbf{z}}\label{+-2}\\
    &=\left(k_1^2+k_2^2+2k_1k_2\emph{\textbf{y}}^T\emph{\textbf{z}}\right)\lambda_2(G).\nonumber
\end{align}
This gives that
$$\frac{\emph{\textbf{x}}^TA(G-uv)\emph{\textbf{x}}}{\emph{\textbf{x}}^T\emph{\textbf{x}}}\ge \lambda_2(G).$$
Thus, by Eq.\,(\ref{eq:relation of lambda 2}),
$$\lambda_2(G-uv)\ge\min_{\emph{\textbf{x}}\in\mathbb{W}}\frac{\emph{\textbf{x}}^TA(G-uv)\emph{\textbf{x}}}{\emph{\textbf{x}}^T\emph{\textbf{x}}}\ge \lambda_2(G).$$

Now assume that $\lambda_2(G-uv)=\lambda_2(G)$.
There exists a nonzero vector $\emph{\textbf{x}}$ in $\mathbb{W}$, wrote as $k_1\emph{\textbf{y}}+k_2\emph{\textbf{z}}$, such that the above inequalities turns into equalities.
From Eqs.\,(\ref{eq:combi}) and (\ref{eq:+-1}), we get that $k_2=0$. Then from Eq.\,(\ref{+-2}), we know $\lambda_1(G-uv)=\lambda_2(G)$.
However, according to Theorem \ref{thm:C-F thm} and Eq.\,(\ref{eq:combi}), we have
$$\lambda_1(G-uv)=\max_{\emph{\textbf{w}}\in\mathbb{R}^n}\frac{\emph{\textbf{w}}^TA(G-uv)\emph{\textbf{w}}}{\emph{\textbf{w}}^T\emph{\textbf{w}}}\ge \emph{\textbf{z}}^TA(G-uv)\emph{\textbf{z}}>\lambda_2(G),$$
a contradiction. Thus we get $\lambda_2(G-uv)>\lambda_2(G)$, completing the proof. 
\end{proof}


\begin{lemma}\label{lem:null cut-set}
Let $\textbf{x}$ be an eigenvector of a graph $G$ corresponding to $\lambda_2(G)$.
Let $N_0$ be a vertex cut and $\textbf{x}\big|_{ {N_0}}=\emph{\textbf{0}}$. 
If $H$ is a connected component of $G-N_0$ and $\textbf{x}\big|_{ {V(H)}}\not=\emph{\textbf{0}}$,
then $\lambda_2(G)$ is an eigenvalue of $H$.
In particular, if $\textbf{x}\big|_{V(H)}>\emph{\textbf{0}}$ or $\textbf{x}\big|_{V(H)}<\emph{\textbf{0}}$, then $\lambda_2(G)$ is the largest eigenvalue of $H$.
\end{lemma}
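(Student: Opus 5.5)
The plan is to restrict the eigenvalue equation to the vertices of $H$. Write $\lambda=\lambda_2(G)$ and $A=A(G)$, so that $A\textbf{x}=\lambda\textbf{x}$. Fix a vertex $w\in V(H)$. Since $H$ is a connected component of $G-N_0$, every neighbour of $w$ in $G$ lies either in $V(H)$ or in $N_0$; no edge joins $w$ to another component of $G-N_0$. The $w$-th row of the eigen-equation therefore reads
$$\lambda\textbf{x}_w=\sum_{t\in N_G(w)}\textbf{x}_t=\sum_{t\in N_H(w)}\textbf{x}_t+\sum_{t\in N_G(w)\cap N_0}\textbf{x}_t=\sum_{t\in N_H(w)}\textbf{x}_t,$$
where the last step uses $\textbf{x}\big|_{N_0}=\textbf{0}$.

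Letting $w$ range over $V(H)$, this says exactly that $A(H)\,\textbf{x}\big|_{V(H)}=\lambda\,\textbf{x}\big|_{V(H)}$. By hypothesis $\textbf{x}\big|_{V(H)}\neq\textbf{0}$, so it is a genuine eigenvector of $A(H)$, and $\lambda_2(G)$ is an eigenvalue of $H$. This proves the first assertion.

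For the ``in particular'' part, suppose $\textbf{x}\big|_{V(H)}>\textbf{0}$; the case $\textbf{x}\big|_{V(H)}<\textbf{0}$ reduces to this one by replacing $\textbf{x}$ with $-\textbf{x}$. Since $H$ is connected, $A(H)$ is irreducible, and the Perron-Frobenius theorem gives a positive eigenvector $\textbf{p}$ of $A(H)$ for $\lambda_1(H)$. Eigenvectors of a symmetric matrix belonging to distinct eigenvalues are orthogonal. However, $\textbf{p}^T\textbf{x}\big|_{V(H)}>0$ because both vectors are positive. Hence $\lambda=\lambda_1(H)$, i.e.\ $\lambda_2(G)$ is the largest eigenvalue of $H$.

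I do not expect a real obstacle. The only point needing care is the observation that no edge joins $V(H)$ to another component of $G-N_0$, so that the vanishing of $\textbf{x}$ on the cut removes every outside contribution.
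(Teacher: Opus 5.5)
Your proof is correct and follows the paper's argument. The paper writes the eigen-equation in block form over the partition $N_0\cup V(H)\cup\bigl(V(G)\setminus(N_0\cup V(H))\bigr)$ and reads off $A(H)\textbf{x}|_{V(H)}=\lambda_2(G)\textbf{x}|_{V(H)}$, which is exactly your row-by-row computation; for the sign-definite case the paper simply cites Perron--Frobenius, and your orthogonality argument against the positive Perron vector of the connected graph $H$ is the standard justification of that citation.
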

\begin{proof}
Write $N_1$ for $V(H)$ and $N_2$ for $V(G)\backslash(N_0\cup N_1)$.
Under the partition $N_0\cup N_1\cup N_2$, we have the system of eigenequations
\begin{align*}
    \left[\begin{matrix}
    A(H) & A[N_1,N_0] & \textbf{O}\\
    A[N_0,N_1] & A[N_0] & A[N_0,N_2]\\
    \textbf{O} & A[N_2,N_0] & A[N_2]
    \end{matrix}\right]\cdot
    \left[\begin{matrix}
    \emph{\textbf{x}}\big|_{N_1}\\
    \emph{\textbf{x}}\big|_{N_0}\\
    \emph{\textbf{x}}\big|_{N_2}
    \end{matrix}\right]=
    \lambda_2(G)\left[\begin{matrix}
    \emph{\textbf{x}}\big|_{N_1}\\
    \emph{\textbf{x}}\big|_{N_0}\\
    \emph{\textbf{x}}\big|_{N_2}
    \end{matrix}\right],
\end{align*}
which implies by noting $\emph{\textbf{x}}\big|_{N_0}=\textbf{0}$
that
\begin{center}
    $A(H)\emph{\textbf{x}}\big|_{N_1}=\lambda_2(G)\emph{\textbf{x}}\big|_{N_1}$
\end{center}
So $\lambda_2(G)$ is an eigenvalue of $H$ since $\emph{\textbf{x}}\big|_{ {V(H)}}\not=\textbf{0}$.

If $\emph{\textbf{x}}\big|_{ {V(H)}}>\textbf{0}$ or $\emph{\textbf{x}}\big|_{ {V(H)}}<\textbf{0}$, then  $\lambda_2(G)$ is the largest eigenvalue of $H$ from Perron-Frobenius theorem.
\end{proof}

\begin{lemma}\label{lem:connected by negative edges}
Let $\textbf{x}$ be an eigenvector of a graph $G$ corresponding to $\lambda_2(G)$ and $H$ be an induced subgraph of $G$. 
If $\textbf{x}\big|_{V(H)}>\emph{\textbf{0}}$ and $\textbf{x}_w\le 0$ for any vertex $w\in V(G)\backslash V(H)$ adjacent to some vertex in $V(H)$, then $\lambda_2(G)\le \lambda_1(H)$.
The inequality is strict if $\textbf{x}_w< 0$ for some above vertex $w$.
\end{lemma}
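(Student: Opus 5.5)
Restrict the eigenequation to $V(H)$ and use positivity. Write $\mathbf{x}_H=\mathbf{x}\big|_{V(H)}>\mathbf{0}$ and $\lambda=\lambda_2(G)$. For each $u\in V(H)$, the eigenequation of $G$ at $u$ splits into neighbours inside $H$ and neighbours outside $H$:
$$\lambda\,\mathbf{x}_u=\sum_{w\in N(u)\cap V(H)}\mathbf{x}_w+\sum_{w\in N(u)\setminus V(H)}\mathbf{x}_w .$$
By hypothesis every $w\notin V(H)$ adjacent to $u$ has $\mathbf{x}_w\le 0$. Hence the second sum is nonpositive, which in vector form reads
$$\lambda\,\mathbf{x}_H\le A(H)\mathbf{x}_H \quad\text{entrywise.}$$

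To pass from this entrywise inequality to a comparison with $\lambda_1(H)$, I use a nonnegative Perron vector. $H$ need not be connected, but $A(H)$ is symmetric and nonnegative, so it has a nonnegative unit eigenvector $\mathbf{y}\ge\mathbf{0}$, $\mathbf{y}\neq\mathbf{0}$, with $A(H)\mathbf{y}=\lambda_1(H)\mathbf{y}$. Multiplying the entrywise inequality on the left by $\mathbf{y}^T$ preserves it and gives
$$\lambda\,\mathbf{y}^T\mathbf{x}_H\le \mathbf{y}^TA(H)\mathbf{x}_H=\lambda_1(H)\,\mathbf{y}^T\mathbf{x}_H .$$
Since $\mathbf{x}_H>\mathbf{0}$ and $\mathbf{y}\ge\mathbf{0}$ is nonzero, $\mathbf{y}^T\mathbf{x}_H>0$. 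Dividing yields $\lambda_2(G)\le\lambda_1(H)$.

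The strictness claim is the delicate step, because the Perron vector might vanish exactly at the vertex $u$ where the inequality is strict. Suppose some $w\notin V(H)$ with $\mathbf{x}_w<0$ is adjacent to $u\in V(H)$. Then the entrywise inequality is strict at coordinate $u$. I would avoid choosing a special $\mathbf{y}$ and instead work component by component:
\begin{itemize}
\item Let $C$ be the component of $H$ containing $u$. Since there are no edges between different components of $H$, the same argument restricted to $C$ gives $\lambda\,\mathbf{x}_C\le A(C)\mathbf{x}_C$, with strict inequality at $u$.
\item Because $C$ is connected, its Perron vector $\mathbf{y}_C$ is strictly positive. Pairing it with $\lambda\,\mathbf{x}_C\le A(C)\mathbf{x}_C$ gives $\lambda\,\mathbf{y}_C^T\mathbf{x}_C<\lambda_1(C)\,\mathbf{y}_C^T\mathbf{x}_C$, strict because $(\mathbf{y}_C)_u>0$.
\item Therefore $\lambda_2(G)<\lambda_1(C)\le\lambda_1(H)$.
\end{itemize}
This component-wise version also covers the case where $H$ is disconnected, which is the main obstacle I anticipate.
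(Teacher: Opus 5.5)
Your proof is correct, and it takes a different route from the paper's.

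Both arguments start from the same entrywise inequality $\lambda_2(G)\,\mathbf{x}_H\le A(H)\mathbf{x}_H$. The paper then pairs this inequality with $\mathbf{x}_H$ itself rather than with a Perron vector of $H$. That gives $\lambda_2(G)\le \mathbf{x}_H^TA(H)\mathbf{x}_H/\mathbf{x}_H^T\mathbf{x}_H\le\lambda_1(H)$ directly from the Rayleigh/Courant--Fischer principle. Because $\mathbf{x}_H$ is strictly positive, the defect vector $\epsilon=\lambda_2(G)\mathbf{x}_H-A(H)\mathbf{x}_H\le\mathbf{0}$ satisfies $\mathbf{x}_H^T\epsilon<0$ as soon as $\epsilon\neq\mathbf{0}$. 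So strictness comes for free, and connectivity of $H$ never enters.

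Your Collatz--Wielandt-style pairing with $\mathbf{y}$ needs Perron--Frobenius. As you noticed, it also forces the component-by-component detour, to make sure the test vector is positive at the vertex where the inequality is strict. That ``delicate step'' is an artifact of testing against $\mathbf{y}$ instead of $\mathbf{x}_H$.

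What your version makes explicit is a slightly sharper conclusion: $\lambda_2(G)\le\lambda_1(C)$ for every component $C$ of $H$, and $\lambda_2(G)<\lambda_1(C)$ for the component carrying the negative neighbour. The paper's Rayleigh argument, applied to $\mathbf{x}_C$ on each component, gives the same.
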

\begin{proof}
Since $A(G)\emph{\textbf{x}}=\lambda_2(G)\emph{\textbf{x}}$, 
we have
$$\lambda_2(G)\cdot\emph{\textbf{x}}\big|_{V(H)}=A(H)\cdot\emph{\textbf{x}}\big|_{V(H)}+\epsilon\le A(H)\cdot\emph{\textbf{x}}\big|_{V(H)},$$
where $\epsilon$ is a vector of order $|V(H)|$ and $\epsilon\le\textbf{0}$. 
From Theorem \ref{thm:C-F thm}, this follows that
$$\lambda_2(G)\le \frac{\emph{\textbf{x}}^T\big|_{V(H)}A(H)\emph{\textbf{x}}\big|_{V(H)}}{\emph{\textbf{x}}^T\big|_{V(H)}\emph{\textbf{x}}\big|_{V(H)}}\le \lambda_1(H).$$
And it is easy to note $\lambda_2(G)< \lambda_1(H)$ if $\epsilon\not=\textbf{0}$.
\end{proof}



\section{Extremal results}\label{section:3}

For two disjoint vertex subsets $U$ and $W$ of a graph, we denote by $E(U,W)$ the set of edges between $U$ and $W$, and let $e(U,W)=|E(U,W)|$.
For a vertex subset $U$ of a graph, we denote by $E(U)$ the set of edges whose end-vertices belong to $U$, and let $e(U)=|E(U)|$.

For a graph $F$, a graph $G$ is called $F$-free if $G$ not contains a copy of $F$ as a subgraph.

The researches on the $k$th-largest eigenvalue are much less than those on the spectral radius.
As mentioned in Section \ref{section:1},
Chung et al. \cite{CGW89} showed either $\lambda_n(G)<c'n$ or $\lambda_2(G)>c'n$, for $K_{r+1}$-free graph with sufficiently large order $n$ and size $\lceil cn^2\rceil$, where $c'$ is a positive constant related to fixed $c\in(0,\frac{1}{2})$ and $r$.
More refinements were obtained by Bollob\'{a}s and Nikiforov \cite{BN04}, and Nikiforov \cite{N06}.
For given family of graphs, a recent work by Brooks et al. \cite{BGHLL25} determined the maximum value of $\lambda_2(G)$ for outerplanar graphs of sufficiently large order $n$.

Here, we pay attention to the maximum value $\lambda_2(G)$ for an $F$-free graph with given general $F$.
And then we apply the results to families of $K_{r+1}$-free graphs and other investigated graphs.

Let $\rho^*(n,F)$ be the maximum spectral radius of $F$-free graphs on $n\ge n_F$ vertices, and $G^*(n,F)$ be an extremal graph with $\rho\big(G^*(n,F)\big)=\rho^*(n,F)$. 
We mark $n$ is sufficiency large if $n_F$ is $\infty$.
A graph $G$ is $F$-saturated if $G$ is $F$-free but $G+e$ for any $e\notin E(G)$ contains a copy of $F$ as a subgraph.
The max-min Courant-Fischer theorem indicates $G^*(n,F)$ is always $F$-saturated.

\begin{theorem}\label{thm:F-free and odd n}
Let $G$ be an $F$-free connected graph on $n$ vertices.
If $n$ is odd and $n> 2n_F$,
then 
$$\lambda_2(G)\le \rho^*\left(\frac{n-1}{2},F\right),$$
and the equality holds if and only if $G\in \mathcal{I}\big(G^*(\frac{n-1}{2},F),G^*(\frac{n-1}{2},F)\big)$ and $G$ is $F$-free.
\end{theorem}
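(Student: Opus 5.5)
Let $\mathbf{z}$ be a unit eigenvector of $G$ for $\lambda_2(G)$, and let $V_+,V_-,V_0$ be as in Section~\ref{section:2}. Since $\mathbf{z}$ is orthogonal to the positive Perron vector, both $V_+$ and $V_-$ are nonempty. Because $|V_+|+|V_-|\le n$ and $n$ is odd, one of them, say $V_+$ after replacing $\mathbf{z}$ by $-\mathbf{z}$ if necessary, has at most $\frac{n-1}{2}$ vertices.

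The upper bound comes from comparing $\lambda_2(G)$ with $G[V_+]$.
\begin{itemize}
\item Every neighbour outside $V_+$ of a vertex of $V_+$ lies in $V_-\cup V_0$, so its $\mathbf{z}$-entry is $\le 0$.
\item Lemma~\ref{lem:connected by negative edges}, applied with $H=G[V_+]$, gives $\lambda_2(G)\le\lambda_1(G[V_+])$.
\item $G[V_+]$ is $F$-free. Pad it with isolated vertices to $\frac{n-1}{2}$ vertices; this leaves $\lambda_1$ unchanged and keeps the graph $F$-free.
\item Since $n>2n_F$, we have $\frac{n-1}{2}\ge n_F$, so $\rho^*(\frac{n-1}{2},F)$ is defined.
\end{itemize}
Hence $\lambda_2(G)\le\lambda_1(G[V_+])\le\rho^*(\frac{n-1}{2},F)$. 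If $\lambda_1$ of an $F$-free graph is not monotone in the order, I would use instead that $\rho^*(m,F)$ is nondecreasing in $m$, which follows because adding isolated vertices preserves $F$-freeness.

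For the equality case, assume $\lambda_2(G)=\rho^*(\frac{n-1}{2},F)$.
\begin{itemize}
\item Both inequalities are tight. The strictness clause of Lemma~\ref{lem:connected by negative edges} then forces every neighbour of $V_+$ outside $V_+$ to lie in $V_0$.
\item From $\lambda_2(G)\,\mathbf{z}|_{V_+}=A(G[V_+])\,\mathbf{z}|_{V_+}$ and the tight Rayleigh quotient, $\mathbf{z}|_{V_+}$ is a positive Perron vector of $G[V_+]$.
\item Padded with isolated vertices, $G[V_+]$ attains $\rho^*(\frac{n-1}{2},F)$. I also need $|V_+|=\frac{n-1}{2}$. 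If $|V_+|<\frac{n-1}{2}$, I would argue that $\rho^*(m,F)$ is strictly increasing for $m\ge n_F$: add a vertex joined to one vertex of an extremal graph. That preserves $F$-freeness when $F$ has minimum degree at least 2; otherwise a separate check is needed, which is a weak point.
\item Next, $V_-$ must also contain a nonempty part. Apply the same argument to $V_-$ via $-\mathbf{z}$, with $|V_-|\le n-|V_+|-|V_0|$. Because $V_0$ separates $V_+$ from $V_-$, connectivity of $G$ gives $|V_0|\ge1$. So $|V_-|\le\frac{n-1}{2}$, and equality again forces $|V_-|=\frac{n-1}{2}$ and $|V_0|=1$.
\item The single vertex $w\in V_0$ is then a cut vertex with $G-w=G[V_+]\cup G[V_-]$. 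Each part is connected, being the support of a Perron vector with eigenvalue $\rho^*$; otherwise the other components would need separate treatment.
\item Each part is an extremal graph $G^*(\frac{n-1}{2},F)$, so $G\in\mathcal{I}(G^*,G^*)$.
\end{itemize}

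For the converse, let $G\in\mathcal{I}(G^*,G^*)$ be $F$-free. Deleting the new vertex leaves an induced subgraph $2G^*$. By Lemma~\ref{lm:Cauchy interlacing theorem}, $\lambda_2(G)\ge\lambda_2(2G^*)=\rho^*(\frac{n-1}{2},F)$, and the upper bound proved above gives equality.

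I expect the main obstacle to be the strictness and size argument in the equality case: forcing $|V_+|=|V_-|=\frac{n-1}{2}$ and that the extremal pieces have exactly $\frac{n-1}{2}$ vertices. This depends on strict monotonicity of $\rho^*(m,F)$, and on extremal graphs being connected or at least not having spurious isolated vertices. I would attempt this by adding a pendant or dominating-type vertex to an extremal graph on fewer vertices, and otherwise restrict to the hypothesis $n>2n_F$ and $F$ without low-degree obstructions.
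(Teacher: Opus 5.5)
Your proof is correct, and it takes a genuinely different and much shorter route than the paper's.

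\textbf{How the two routes differ.} The paper works only with a maximizer $G^*$. It first gets the lower bound (\ref{eq:L1-F}) by interlacing on a member of $\mathcal{I}\big(G^*(\frac{n-1}{2},F),G^*(\frac{n-1}{2},F)\big)$. It then uses Theorem~\ref{thm:remove edge between N+ and N-} and maximality to make every $V_+$--$V_-$ edge a cut edge, and proves $e(V_+,V_-)\le 1$ by a three-component argument. It finishes with a case analysis on $e(V_+,V_-)\in\{0,1\}$ via Lemmas~\ref{lem:null cut-set} and~\ref{lem:connected by negative edges}, closing each step by contradicting (\ref{eq:L1-F}). You instead apply Lemma~\ref{lem:connected by negative edges} once, to the smaller sign class of an arbitrary connected $F$-free $G$, and read the equality structure off its strictness clause.

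\textbf{What each route buys.} Your route needs neither the edge-operation theorems nor the lower bound. So your upper bound holds even when $\mathcal{I}(G^*,G^*)$ contains no $F$-free graph. For example, take $F=K_{1,3}$: then $\rho^*(m,F)=2$ for $m\ge 3$, but every connected graph of maximum degree at most $2$ (a path or cycle) has $\lambda_2<2$. There (\ref{eq:L1-F}) is false, so the paper's contradictions are drawn against a false inequality. Your equality analysis also applies directly to any $G$ attaining the bound. The paper's maximizer-plus-edge-operations framework buys reuse. It is exactly the machinery behind the structural conclusion $G^*\in\mathcal{E}(H_1,H_2)$ in Theorem~\ref{thm:F-free and even n}. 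In the even case your one-line argument only yields $\lambda_2(G)\le\rho^*(\frac n2,F)$, with no structural information.

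\textbf{The step you flag.} The paper simply asserts strict monotonicity of $\rho^*(m,F)$, and it genuinely fails for some $F$. For $F=K_{1,s}$, the graph $K_s\cup(m-s)K_1$ shows $\rho^*(m,F)=s-1$ for all $m\ge s$. You do not need it. Write $\rho^*:=\rho^*(\frac{n-1}{2},F)$. In the equality case, $\mathbf z|_{V_+}>\mathbf 0$ is an eigenvector of $G[V_+]$ for $\lambda_1(G[V_+])=\rho^*$. Suppose $|V_+|<\frac{n-1}{2}$.
\begin{itemize}
\item By connectivity, choose $s\notin V_+$ with a neighbour in $V_+$, and extend $V_+\cup\{s\}$ to a set $U$ of $\frac{n-1}{2}$ vertices of $G$.
\item $G[U]$ is $F$-free, being a subgraph of $G$.
\item Let $c=\sum_{v\in N(s)\cap V_+}\mathbf z_v>0$. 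The test vector $\mathbf z|_{V_+}+t\mathbf e_s$ has Rayleigh quotient $\rho^*+\frac{2tc-\rho^* t^2}{\|\mathbf z|_{V_+}\|^2+t^2}$.
\item For small $t>0$ this exceeds $\rho^*$, contradicting the definition of $\rho^*$.
\end{itemize}
The same argument, applied to $-\mathbf z$, handles $V_-$.

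\textbf{A minor point.} Your remark that each $G[V_\pm]$ is connected, ``being the support of a Perron vector'', is not valid: two disjoint copies of a graph already have a positive $\lambda_1$-eigenvector. It is also unnecessary. $\mathcal{I}(\cdot,\cdot)$ only requires the resulting graph to be connected, not the two pieces.
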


\begin{proof}
Let $G^*$ be a graph with the maximum $\lambda_2(G)$ among all $F$-free graphs $G$ of odd order $n> 2n_F$.
Then for any $H\in\mathcal{I}\big(G^*(\frac{n-1}{2},F),G^*(\frac{n-1}{2},F)\big)$, we know
    $\lambda_2(G^*)\ge\lambda_2(H)$. 
Applying Lemma \ref{lm:Cauchy interlacing theorem} to the graph $H$ yields
\begin{align}\label{eq:L1-F}
    \lambda_2(G^*)\ge\rho^*\left(\frac{n-1}{2},F\right).
\end{align}

Let $\emph{\textbf{x}}$ be an eigenvector of $G^*$ corresponding to $\lambda_2(G^*)$, and define
\begin{center}
    $V_+=\{v\in V(G^*): \emph{\textbf{x}}_v>0\}$,\ \ $V_-=\{v\in V(G^*): \emph{\textbf{x}}_v<0\}$\ \ and\ \ $V_0=V(G^*)\backslash(V_+\cup V_-)$.
\end{center}

It is clear that $G^*-uv$ is $F$-free for any edge $uv\in E(G^*)$.
We have each edge $uv$ in $E(V_+,V_-)$ is a cut edge. On the contrary that there exists a non-cut edge $uv\in E(V_+,V_-)$, we may have $G^*-uv$ is connected and $\lambda_2(G^*-uv)>\lambda_2(G^*)$ by Theorem \ref{thm:remove edge between N+ and N-}, which contradicts the maximality of $\lambda_2(G^*)$.

Mention that $\rho^*(n,F)$ is strictly increasing on $n$.
We show the following critical claim.

\begin{claim}\label{clm:e(+,-) le 1-F}
$e(V_+,V_-)\le 1$.
\end{claim}
\noindent\textbf{Proof of Claim \ref{clm:e(+,-) le 1-F}.}
Suppose on the contrary that $e(V_+,V_-)\ge 2$.
There are at least three induced subgraphs, say $H_1$, $H_2$ and $H_3$, in distinct connected components of $G-E(V_+,V_-)$ such that $V(H_i)\subseteq V_+$ or $V(H_i)\subseteq V_-$ for $1\le i\le 3$.
And for any vertex $u\in V(H_i)$ and its neighbors $v$ outside $V(H_i)$, it holds $\emph{\textbf{x}}_u\emph{\textbf{x}}_v\le 0$.
By Lemma \ref{lem:connected by negative edges}, we have
$$\lambda_2(G^*)\le \min\{\lambda_1(H_i):i=1,2,3\}.$$
This together with $\min\{|V(H_i)|:i=1,2,3\}\le \frac{n}{3}$ gives that
$\lambda_2(G^*)\le \rho^*\left(\lfloor\frac{n}{3}\rfloor,F\right)$, which contradicts Eq.\,(\ref{eq:L1-F}) since $\rho^*\left(\lfloor\frac{n}{3}\rfloor,F\right)<\rho^*\left(\frac{n-1}{2},F\right)$. $\hfill\blacksquare$ 

By Claim \ref{clm:e(+,-) le 1-F}, we have $e(V_+,V_-)= 0$ or $1$. 
We distinguish the following two cases depending on the value of $e(V_+,V_-)$.

\textbf{Case 1.} $e(V_+,V_-)=0$. Then $V_0\not=\varnothing$ since $G^*$ is connected. 
We assert $|V_0|=1$.
Assume on the contrary that $|V_0|\ge 2$. 
It is clear that $V_0$ is a vertex cut of $G^*$.
Then by Lemma \ref{lem:null cut-set}, we have $\lambda_2(G^*)=\lambda_1(G^*[V_+])=\lambda_1(G^*[V_-])$.
Since $\min\{|V_+|,|V_-|\}\le \lfloor\frac{n-2}{2}\rfloor=\frac{n-1}{2}-1$.
Then
$$\lambda_2(G^*)=\lambda_1(G^*[V_+])=\lambda_1(G^*[V_-])\le \rho^*\left(\frac{n-1}{2}-1,F\right) <\rho^*\left(\frac{n-1}{2},F\right),$$
a contradiction to Eq.\,(\ref{eq:L1-F}).
So $|V_0|=1$. 

Moreover, if $\big||V_+|-|V_-|\big|\ge 2$, we also have $\min\{|V_+|,|V_-|\}\le\frac{n-1}{2}-1$, and then get a contradiction. So $|V_+|=|V_-|$.
Combining the maximum of $\lambda(G^*)$ with Theorem \ref{thm: add edge in N+/N-}, this indicates $G^*\in \mathcal{I}\left(G^*\left(\frac{n-1}{2},F\right),G^*\left(\frac{n-1}{2},F\right)\right)$.

\textbf{Case 2.} $e(V_+,V_-)= 1$. 
We assert $|V_0|=0$. 
Clearly, each vertex in $V_0$ (if any) is adjacent to some vertices in both $V_+$ and $V_-$, and the edge in $E(V_+,V_-)$ is a cut edge. If $V_0\not=\varnothing$, then $G^*$ contains at least 3 connected components, denoted by $H_1$, $H_2$ and $H_3$, induced by $V_+$ and $V_-$. 
Since $\min\{|V(H_i)|:i=1,2,3\}\le \frac{n-|V_0|}{3}\le\frac{n-1}{3}$. Thus,
from Lemmas \ref{lem:null cut-set} and \ref{lem:connected by negative edges}, we get 
$$\lambda_2(G^*)\le\min\{ \lambda_1(H_i):i=1,2,3\}\le\rho^*\left(\left\lfloor\frac{n-1}{3}\right\rfloor,F\right),$$
a contradiction to Eq.\,(\ref{eq:L1-F}).
So $|V_0|=0$.

The graph $G^*$ can be viewed as a graph by adding an edge between $G^*[V_+]$ and $G^*[V_-]$.
Note that $\min\{|V(H_i)|:i=1,2\}\le \lfloor\frac{n}{2}\rfloor=\frac{n-1}{2}.$
By Lemma \ref{lem:connected by negative edges}, we get 
$$\lambda_2(G^*)<\min\big\{\lambda_1(G^*[V_+]),\lambda_1(G^*[V_-])\big\}\le\rho^*\left(\frac{n-1}{2},F\right),$$
a contradiction to Eq.\,(\ref{eq:L1-F}).

These two cases imply that $G^*\in \mathcal{I}\big(G^*(\frac{n-1}{2},F),G^*(\frac{n-1}{2},F)\big)$ and $\lambda_2(G^*)=\rho^*\left(\frac{n-1}{2},F\right)$.
Moreover, we can check that $\lambda_2(G)=\lambda_1\left(G^*(\frac{n-1}{2},F)\right)=\rho^*(\frac{n-1}{2},F)$ for any graph $G$ in $\mathcal{I}\big(G^*(\frac{n-1}{2},F),G^*(\frac{n-1}{2},F)\big)$ by Lemma \ref{lm:Cauchy interlacing theorem}.
\end{proof}

Before describing the graph $G^*$ with maximum the second largest eigenvalue for $F$-free graphs of even order $n$, we need to introduce a proper lower bound for $\lambda_2(G^*)$.

\begin{lemma}\label{prop:asymptotic value for F}
If $G^*$ is a graph with the maximum $\lambda_2(G)$ among $F$-free connected graphs $G$ of even order $n\ge 2n_F$, where $F$ does not contain cut edges. Then 
$$\lambda_2(G^*)>\rho^*\left(\frac{n}{2},F\right)-\frac{2}{n}.$$
\end{lemma}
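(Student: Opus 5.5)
The plan is to exhibit one explicit $F$-free connected graph of order $n$ whose second eigenvalue exceeds $\rho^*(\frac{n}{2},F)-\frac{2}{n}$. The maximality of $G^*$ then gives the claim. The natural candidate is $G=HuvH'$, where $H\cong H'\cong G^*(\frac{n}{2},F)$ and $u\in V(H)$, $v\in V(H')$ are corresponding vertices (images of each other under the isomorphism). The proof has three steps.

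\textbf{Step 1: $G$ is an admissible competitor.} I will show that $G$ is $F$-free and connected. The edge $uv$ is a cut edge of $G$, so it is a cut edge of any subgraph of $G$ containing it. Hence a copy of $F$ in $G$ cannot use $uv$, because $F$ has no cut edges. Without $uv$, a copy of $F$ must lie inside $H$ or inside $H'$; this uses that $F$ is connected, which I will assume, or more generally that the components of $F$ cannot be split between the two halves. Both halves are $F$-free, so $G$ is $F$-free. For connectivity I will take $G^*(\frac{n}{2},F)$ connected. This is the point I expect to need the most care: I would justify it by noting that one may choose the extremal graph to be connected (it is $F$-saturated and $\frac{n}{2}\ge n_F$), or else restrict to the setting where this holds.

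\textbf{Step 2: reduce $\lambda_2(G)$ to a one-block problem.} Let $\rho=\rho^*(\frac{n}{2},F)$, and let $\textbf{y}$ be the unit Perron vector of $H$. Choose $u$ with $\textbf{y}_u=t$ minimal, so that $t^2\le \frac{1}{n/2}=\frac{2}{n}$. Order the vertices of $H'$ compatibly with those of $H$. Then $A(G)$ commutes with the involution swapping the two copies. So the spectrum of $G$ is the union of the spectra of $A(H)+E_{uu}$ (symmetric vectors $(\textbf{w},\textbf{w})$) and $A(H)-E_{uu}$ (antisymmetric vectors $(\textbf{w},-\textbf{w})$). Here $E_{uu}$ is the matrix unit at position $(u,u)$. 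The Perron eigenvector of the connected graph $G$ is symmetric, so $\lambda_1(G)$ comes from the $+$ block. Hence $\lambda_1(A(H)-E_{uu})$ is an eigenvalue of $G$ other than the Perron one, and therefore $\lambda_2(G)\ge \lambda_1(A(H)-E_{uu})$.

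\textbf{Step 3: strict Rayleigh bound.} By Theorem \ref{thm:C-F thm},
$$\lambda_1(A(H)-E_{uu})\ge \textbf{y}^T(A(H)-E_{uu})\textbf{y}=\rho-t^2\ge \rho-\tfrac{2}{n}.$$
The first inequality is strict. Equality would force $\textbf{y}$ to be an eigenvector of $A(H)-E_{uu}$. But $(A(H)-E_{uu})\textbf{y}=\rho\textbf{y}-t\textbf{e}_u$ with $t>0$, which is not a multiple of $\textbf{y}$ because $\textbf{y}$ is positive on the other vertices (and $\frac{n}{2}\ge 2$). Combining the steps,
$$\lambda_2(G^*)\ge \lambda_2(G)>\rho^*\left(\tfrac{n}{2},F\right)-\tfrac{2}{n}.$$
The main obstacle is Step 1, specifically securing connectivity of $G^*(\frac{n}{2},F)$ and handling $F$-freeness for disconnected $F$. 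The spectral estimate in Steps 2 and 3 is routine once the symmetric/antisymmetric splitting is used.
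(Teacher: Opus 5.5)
Your proof is correct. It uses the same competitor as the paper, $G^*(\frac n2,F)uvG^*(\frac n2,F)$ with $u,v$ corresponding vertices of minimum Perron entry, and the same estimate $\rho^*(\frac n2,F)-\mathbf{x}_u^2\ge\rho^*(\frac n2,F)-\frac2n$. The difference is in how $\lambda_2$ is bounded below.

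The paper applies the max--min form of Courant--Fischer to the span of $(\mathbf{x},\mathbf{x})$ and $(\mathbf{x},-\mathbf{x})$. It obtains the Rayleigh quotient $\rho^*(\frac n2,F)+\bigl(\frac{2s^2}{s^2+t^2}-1\bigr)\mathbf{x}_u^2$, which is minimized at $s=0$, i.e.\ at your antisymmetric vector. For strictness, the paper argues that equality forces $s=0$ and $\mathbf{x}_u^2=\frac2n$, so $G^*(\frac n2,F)$ is regular. It then concludes that $(\mathbf{x},-\mathbf{x})$ would have to be a $\lambda_2$-eigenvector. That inference silently uses the orthogonality of this antisymmetric vector to the symmetric Perron vector of the whole graph.

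Your splitting into $A(H)\pm E_{uu}$ builds that structure in from the start and reduces the bound to a single Rayleigh quotient. Strictness then becomes a one-line check ($\mathbf{y}$ is not an eigenvector of $A(H)-E_{uu}$), with no regularity case. It is the cleaner rendering of the same idea.

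Your Step 1 caveats are assumptions the paper also makes without comment. Connectivity of $G^*(\frac n2,F)$ does follow from the no-cut-edge hypothesis, as you suspect. An edge joining two components is a bridge, so it lies in no copy of $F$. Hence if $G^*(\frac n2,F)$ were disconnected, joining a component of largest spectral radius to another component would keep it $F$-free and strictly increase $\rho$.

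Connectedness of $F$, however, is a genuine additional hypothesis that both arguments need. Take $F=2K_3$, which has no cut edges. The graph $K_1\vee T_{\frac n2-1,2}$ is $2K_3$-free, since all its triangles share the apex, and it properly contains $T_{\frac n2,2}$. Therefore $\rho^*(\frac n2,F)>\rho(T_{\frac n2,2})$. By Theorem \ref{thm:clique-1} with $r=2$, $G^*(\frac n2,F)$ then contains a triangle, and the two bridged copies contain $2K_3$. Stating this assumption explicitly, as you do, is the right call.
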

\begin{proof}
Let $\textit{\textbf{x}}$ be the Perron vector of $G^*(\frac{n}{2},F)$.
Define the space $\mathbb{W}$ as
\begin{center}
    $\mathbb{W}=\mathrm{span}\left(\left[
    \begin{matrix}
    \textit{\textbf{x}}\\
    \textit{\textbf{x}}
    \end{matrix}
    \right],
    \left[
    \begin{matrix}
    \textit{\textbf{x}}\\
    -\textit{\textbf{x}}
    \end{matrix}
    \right]\right)$.
\end{center}
It is clear that $\left[
    \begin{matrix}
    \textit{\textbf{x}}\\
    \textit{\textbf{x}}
    \end{matrix}
    \right]\perp\left[
    \begin{matrix}
    \textit{\textbf{x}}\\
    -\textit{\textbf{x}}
    \end{matrix}
    \right]$, so $\dim(\mathbb{W})=2$.
For any nonzero vector $\textit{\textbf{y}}\in \mathbb{W}$, we have $\textit{\textbf{y}}=s\left[
    \begin{matrix}
    \textit{\textbf{x}}\\
    \textit{\textbf{x}}
    \end{matrix}
    \right]+t\left[
    \begin{matrix}
    \textit{\textbf{x}}\\
    -\textit{\textbf{x}}
    \end{matrix}
    \right]$ with 
\begin{align}\label{eq:norm of y-F}
    \|\textit{\textbf{y}}\|^2=(s+t)^2\|\textit{\textbf{x}}\|^2+(s-t)^2\|\textit{\textbf{x}}\|^2=2(s^2+t^2),
\end{align}
where $s$ and $t$ are not both zero.

Let $u$ (or $v$, respectively) be a vertex of $G^*\left(\frac{n}{2},F\right)$ (or a copy of $G^*\left(\frac{n}{2},F\right)$, respectively) with the minimal element in \textit{\textbf{x}}.
Then $\textit{\textbf{x}}_u=\textit{\textbf{x}}_v=\min\left\{\textit{\textbf{x}}_w: w\in V\left(G^*\left(\frac{n}{2},F\right)\right)\right\}$, which indicates 
$\frac{n}{2}\textit{\textbf{x}}_u^2\le\sum_{w\in V\left(G^*\left(\frac{n}{2},F\right)\right)}\textit{\textbf{x}}_w^2=1$, that is,
\begin{align}\label{eq:min x-w-F}
    \textit{\textbf{x}}_u^2\le\frac{2}{n}.
\end{align}

The maximality of $\lambda_2(G^*)$ gives $\lambda_2(G^*)\ge\lambda_2\Big(G^*\big(\frac{n}{2},F\big)uvG^*\big(\frac{n}{2},F\big)\Big)$. 
It suffices to show $\lambda_2\Big(G^*\big(\frac{n}{2},F\big)uvG^*\big(\frac{n}{2},F\big)\Big)>\rho^*\left(\frac{n}{2},F\right)-\frac{2}{n}$.

Let $\textbf{e}_u$ be the normal $\frac{n}{2}$-vector with $u$th element $1$.
We get
\begin{align*}
    &\textit{\textbf{y}}^TA\Big(G^*\big(\frac{n}{2},F\big)uvG^*\big(\frac{n}{2},F\big)\Big)\textit{\textbf{y}}\\
    &=\left[(s+t)\textit{\textbf{x}}^T\ \  (s-t)\textit{\textbf{x}}^T\right]\left[
    \begin{matrix}
    A\Big(G^*\big(\frac{n}{2},F\big)\Big) & \textbf{e}_u\textbf{e}_v^T\\
    \textbf{e}_v\textbf{e}_u^T & A\Big(G^*\big(\frac{n}{2},F\big)\Big)\\
    \end{matrix}
    \right]
    \left[
    \begin{matrix}
    (s+t)\textit{\textbf{x}}\\
    (s-t)\textit{\textbf{x}}\\
    \end{matrix}
    \right]\\
    &=(s+t)^2\textit{\textbf{x}}^TA\Big(G^*\big(\frac{n}{2},F\big)\Big)\textit{\textbf{x}}+(s-t)^2\textit{\textbf{x}}^TA\Big(G^*\big(\frac{n}{2},F\big)\Big)\textit{\textbf{x}}+2(s+t)(s-t)\textit{\textbf{x}}_u\textit{\textbf{x}}_v\\
    &=2(s^2+t^2)\textit{\textbf{x}}^TA\Big(G^*\big(\frac{n}{2},F\big)\Big)\textit{\textbf{x}}+2(s^2-t^2)\textit{\textbf{x}}_u^2\\
    &=2(s^2+t^2)\lambda_1\Big(G^*\big(\frac{n}{2},F\big)\Big)+2(s^2-t^2)\textit{\textbf{x}}_u^2\\
    &=2(s^2+t^2)\rho^*\left(\frac{n}{2},F\right)+2(s^2-t^2)\textit{\textbf{x}}_u^2,
\end{align*}
which combining with Eqs.\,(\ref{eq:norm of y-F}) and (\ref{eq:min x-w-F}) follows that
\begin{align*}
    \frac{\textit{\textbf{y}}^TA\Big(G^*\big(\frac{n}{2},F\big)uvG^*\big(\frac{n}{2},F\big)\Big)\textit{\textbf{y}}}{\|\textit{\textbf{y}}\|^2}&=\rho^*\left(\frac{n}{2},F\right)+\frac{2(s^2-t^2)\textit{\textbf{x}}_u^2}{2(s^2+t^2)}\\
    &=\rho^*\left(\frac{n}{2},F\right)+\left(\frac{2s^2}{s^2+t^2}-1\right)\textit{\textbf{x}}_u^2\\
    &\ge \rho^*\left(\frac{n}{2},F\right)-\textit{\textbf{x}}_u^2\\
    &\ge \rho^*\left(\frac{n}{2},F\right)-\frac{2}{n}.
\end{align*}
By Eq.\,(\ref{eq:relation of lambda 2}), we have
\begin{align*}
    \lambda_2\Big(G^*\big(\frac{n}{2},F\big)uvG^*\big(\frac{n}{2},F\big)\Big)\ge\min_{\textit{\textbf{y}}\in\mathbb{W}}{\frac{\textit{\textbf{y}}^TA\Big(G^*\big(\frac{n}{2},F\big)uvG^*\big(\frac{n}{2},F\big)\Big)\textit{\textbf{y}}}{\|\textit{\textbf{y}}\|^2}}\ge \rho^*\left(\frac{n}{2},F\right)-\frac{2}{n}.
\end{align*}
If the above equality holds, then $s=0$ and $\textit{\textbf{x}}_u^2=\frac{2}{n}$.
This implies that $\textit{\textbf{y}}=\frac{1}{\sqrt{n}}\left[
    \begin{matrix}
    \textbf{1}\\
    -\textbf{1}
    \end{matrix}
    \right]$ (let $t=\frac{1}{\sqrt{2}}$) and $G^*\left(\frac{n}{2},F\right)$ is regular.
And then 
\begin{align}\label{eq:for not equal}
\lambda_2\Big(G^*\big(\frac{n}{2},F\big)uvG^*\big(\frac{n}{2},F\big)\Big)=\textit{\textbf{y}}^TA\Big(G^*\big(\frac{n}{2},F\big)uvG^*\big(\frac{n}{2},F\big)\Big)\textit{\textbf{y}}.    
\end{align}
However, it is easy to check that $$A\Big(G^*\big(\frac{n}{2},F\big)uvG^*\big(\frac{n}{2},F\big)\Big)\textit{\textbf{y}}\not=\lambda_2\Big(G^*\big(\frac{n}{2},F\big)uvG^*\big(\frac{n}{2},F\big)\Big)\textit{\textbf{y}}.$$
So $\lambda_2\Big(G^*\big(\frac{n}{2},F\big)uvG^*\big(\frac{n}{2},F\big)\Big)\not=\textit{\textbf{y}}^TA\Big(G^*\big(\frac{n}{2},F\big)uvG^*\big(\frac{n}{2},F\big)\Big)\textit{\textbf{y}}$, a contradiction with Eq.\,(\ref{eq:for not equal}).

Thus, we obtain $\lambda_2(G^*)\ge\lambda_2\Big(G^*\big(\frac{n}{2},F\big)uvG^*\big(\frac{n}{2},F\big)\Big)>\rho^*\left(\frac{n}{2},F\right)-\frac{2}{n}$.
\end{proof}

\begin{theorem}\label{thm:F-free and even n}
Let $G^*$ be a graph with the maximum $\lambda_2(G)$ among $F$-free connected graphs $G$ on $n$ vertices, where $F$ does not contain cut edges. If $n$ is even and $n\ge 2n_F$, then
\begin{center}
  $\lambda_2(G^*)=\rho^*\left(\frac{n}{2},F\right)-\textbf{o}(1)$\ \ and \ \    $G^*\in\mathcal{E}(H_1,H_2)$,
\end{center}
where $H_1$ and $H_2$ are $F$-saturated and $n(H_1)=n(H_2)=\frac{n}{2}$.
\end{theorem}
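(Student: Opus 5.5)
The plan is to mirror the proof of Theorem \ref{thm:F-free and odd n}, with Lemma \ref{prop:asymptotic value for F} replacing the lower bound (\ref{eq:L1-F}). Let $\textbf{x}$ be an eigenvector of $G^*$ for $\lambda_2(G^*)$, and define $V_+,V_-,V_0$ as before.

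\textbf{Step 1: cross edges are cut edges.} As in the odd case, every edge of $E(V_+,V_-)$ is a cut edge of $G^*$. Otherwise, deleting a non-cut edge $uv$ keeps the graph connected and $F$-free, and Theorem \ref{thm:remove edge between N+ and N-} gives $\lambda_2(G^*-uv)>\lambda_2(G^*)$, contradicting maximality.

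\textbf{Step 2: $e(V_+,V_-)\le 1$.} The argument of Claim \ref{clm:e(+,-) le 1-F} produces three sign-constant pieces $H_i$ with all outside neighbours of nonpositive product. Lemma \ref{lem:connected by negative edges} then gives $\lambda_2(G^*)\le\rho^*(\lfloor n/3\rfloor,F)$. This is incompatible with $\lambda_2(G^*)>\rho^*(\frac n2,F)-\frac 2n$, provided $\rho^*(\frac n2,F)-\rho^*(\lfloor n/3\rfloor,F)\ge \frac2n$.

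This gap estimate is the main obstacle, since strict monotonicity of $\rho^*(\cdot,F)$ alone is not quantitative. I would get it by embedding $G^*(m,F)$ into $m+1$ vertices. Attach a new vertex to a single vertex of $G^*(m,F)$; the new edge is pendant, hence a cut edge, and since $F$ has no cut edges the result stays $F$-free. Perron-vector estimates then give an increment of order $\mathbf{x}_{\max}^2/\rho\gtrsim 1/(m\rho)$ per added vertex. Iterating over the $\frac n2-\frac n3$ steps should comfortably beat $2/n$ for $n\ge 2n_F$, possibly after enlarging the threshold. This is also where the no-cut-edge hypothesis on $F$ enters the argument.

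\textbf{Step 3: case $e(V_+,V_-)=0$.} Then $V_0$ is a nonempty vertex cut, and by Lemma \ref{lem:null cut-set} we have $\lambda_2(G^*)=\lambda_1(G^*[V_+])=\lambda_1(G^*[V_-])$ (passing to a component if needed). Since $|V_0|\ge1$ and $n$ is even, $\min\{|V_+|,|V_-|\}\le\frac n2-1$, so $\lambda_2\le\rho^*(\frac n2-1,F)$. By the same quantitative gap (one step of Step 2's increment exceeding $2/n$), this contradicts Lemma \ref{prop:asymptotic value for F}.

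\textbf{Step 4: case $e(V_+,V_-)=1$.} As in Case 2 of the odd proof, $V_0\neq\varnothing$ would create three components, contradicting Step 2's bound. So $V_0=\varnothing$, and $G^*$ is $G^*[V_+]$ and $G^*[V_-]$ joined by one edge, i.e.\ $G^*\in\mathcal{E}(H_1,H_2)$.

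\textbf{Step 5: $|V_+|=|V_-|=\frac n2$.} If the sides were unbalanced, the smaller side would have at most $\frac n2-1$ vertices. Lemma \ref{lem:connected by negative edges} would then give $\lambda_2<\rho^*(\frac n2-1,F)$, again contradicting the lower bound.

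\textbf{Step 6: saturation.} Each side is $F$-saturated. For a non-edge $e$ inside $V_+$ (or $V_-$), if $G^*+e$ were $F$-free, Theorem \ref{thm: add edge in N+/N-} would strictly increase $\lambda_2$.

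\textbf{Step 7: asymptotics.} Finally, Lemma \ref{lem:connected by negative edges} gives $\lambda_2(G^*)<\lambda_1(H_i)\le\rho^*(\frac n2,F)$. Combined with Lemma \ref{prop:asymptotic value for F}, this yields $\lambda_2(G^*)=\rho^*(\frac n2,F)-o(1)$, with error at most $2/n$.
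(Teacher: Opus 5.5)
Your Steps 1, 6 and 7 match the paper, but there is a genuine gap at the point you flagged. Your only lower bound is $\lambda_2(G^*)>\rho^*(\frac n2,F)-\frac2n$ from Lemma~\ref{prop:asymptotic value for F}. So Steps 3 and 5 need the one-step gap $\rho^*(\frac n2,F)-\rho^*(\frac n2-1,F)\ge\frac2n$, and Steps 2 and 4 need the analogous gap down to roughly $\rho^*(\lfloor n/3\rfloor,F)$.

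The pendant-vertex construction cannot give these. A pendant vertex at $v$ raises the spectral radius by about $\mathbf{x}_v^2/\rho$, and with $\mathbf{x}_{\max}^2\ge 1/m$ your estimate certifies only about $1/(m\rho)$. Since $F$ has an edge and no cut edge, it contains a cycle, so stars are $F$-free and $\rho^*(m,F)\ge\sqrt{m-1}$. The certified per-step increment is therefore $O(m^{-3/2})=o(1/n)$, nowhere near $\frac2n$. This is not a lossy estimate: for a regular extremal graph such as $T_{m,r}$ with $r\mid m$ one has $\mathbf{x}_v^2=1/m$, and the pendant increment really is of order $1/(m\rho)$.

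The summed version in Step 2 fails as well. If $\rho^*(m,F)\approx cm$, then summing $1/(m\rho^*(m,F))$ over $\frac n3\le m<\frac n2$ gives about $\frac{1}{cn}$. This is below $\frac2n$ once $c>\frac12$ (e.g.\ $F=K_4$, $c=\frac23$), and enlarging the threshold does not help because both sides are $\Theta(1/n)$.

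The missing idea is to prove the strict bound $\lambda_2(G^*)>\rho^*(\frac n2-1,F)$ directly. Then Steps 2--5 need only monotonicity of $\rho^*(\cdot,F)$, and Lemma~\ref{prop:asymptotic value for F} is used only in Step 7. The paper gets this bound from one more application of the no-cut-edge hypothesis:
\begin{itemize}
\item[(i)] Take two copies $G_1,G_2$ of $G^*(\frac n2-1,F)$, attach a pendant vertex $u_i$ to some $w_i\in V(G_i)$ to get $G_i'$, and add the edge $u_1u_2$. The result has $n$ vertices and is connected and $F$-free.
\item[(ii)] Schwenk's bridge formula gives $\phi(G_1'u_1u_2G_2',\lambda)=\phi^2(G_1',\lambda)-\phi^2(G_1,\lambda)$. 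At $\lambda=\rho^*(\frac n2-1,F)$ this equals $\phi^2(G_1-w_1,\lambda)>0$.
\item[(iii)] Since $\lambda_1(G_1'u_1u_2G_2')>\rho^*(\frac n2-1,F)$, an even number, hence at least two, of its eigenvalues exceed $\rho^*(\frac n2-1,F)$. Thus $\lambda_2(G^*)\ge\lambda_2(G_1'u_1u_2G_2')>\rho^*(\frac n2-1,F)$.
\end{itemize}
With this bound, each contradiction in your Steps 2--5 compares an upper bound of at most $\rho^*(\frac n2-1,F)$ with a strict lower bound above it. The final squeeze $\rho^*(\frac n2,F)-\frac2n<\lambda_2(G^*)<\rho^*(\frac n2,F)$ then goes through as you wrote it.
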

\begin{proof}
For $i=1,2$, let $G_i$ be a copy of $G^*\left(\frac{n}{2}-1,F\right)$, and $G'_i$ be obtained from $G_i$ by attaching a pendent vertex $u_i$ to some vertex $w_i$ of $G_i$.
Clearly, the graph $G'_1u_1u_2G'_2$ is $F$-free and connected.
Since $G^*$ is a graph with the maximum $\lambda_2(G)$ among $F$-free connected graphs $G$ on $n$ vertices.
Then $\lambda_2(G^*)\ge\lambda_2(G'_1u_1u_2G'_2)$.
From \cite[Theorem \textcolor{blue}{3}]{S74},
the characteristic polynomial of $G'_1u_1u_2G'_2$ satisfies
$$\phi(G'_1u_1u_2G'_2,\lambda)=\phi^2(G'_1,\lambda)-\phi^2(G_1,\lambda)=\Big(\lambda\phi(G_1,\lambda)-\phi(G_1-w_1,\lambda)\Big)^2-\phi^2(G_1,\lambda).$$
Then 
$$\phi\left(G'_1u_1u_2G'_2,\rho^*\big(\frac{n}{2}-1,F\big)\right)=\phi^2\left(G_1-w_1,\rho^*\big(\frac{n}{2}-1,F\big)\right)>0,$$ 
since $G_1-w_1$ is a proper subgraph of $G_1$. 
Moreover, the graph $G'_1$ is a proper subgraph of $G'_1u_1u_2G'_2$, so $\lambda_1(G'_1u_1u_2G'_2)>\rho^*\left(\frac{n}{2}-1,F\right)$.
This deduces
\begin{equation*}
    \lambda_2(G'_1u_1u_2G'_2)>\rho^*\left(\frac{n}{2}-1,F\right).
\end{equation*}
This together with the maximality of $\lambda_2(G^*)$ gives that
\begin{equation}\label{eq:L2-F}
    \lambda_2(G^*)>\rho^*\left(\frac{n}{2}-1,F\right).
\end{equation}

Let $\emph{\textbf{x}}$ be an eigenvector of $G^*$ corresponding to $\lambda_2(G^*)$, and define
\begin{center}
    $V_+=\{v\in V(G^*): \emph{\textbf{x}}_v>0\}$,\ \ $V_-=\{v\in V(G^*): \emph{\textbf{x}}_v<0\}$\ \ and\ \ $V_0=V(G^*)\backslash(V_+\cup V_-)$.
\end{center}

Note $G^*-uv$ is still $F$-free for any edge $uv\in E(V_+,V_-)$.
By Theorem \ref{thm:remove edge between N+ and N-} and the maximality of $\lambda_2(G^*)$, we have each edge
$uv$ in $E(V_+,V_-)$ is a cut edge.
A similar method as the proof of Claim \ref{clm:e(+,-) le 1-F} in Theorem \ref{thm:F-free and odd n} can give that
$e(V_+,V_-)\le 1$.

We now distinguish the following two cases.

\textbf{Case 1.} $e(V_+,V_-)= 0$. Then $|V_0|\ge1$ since $G^*$ is connected.
By Lemma \ref{lem:null cut-set}, we have $\lambda_2(G^*)=\lambda_1(G[V_+])=\lambda_1(G[V_-])$.
Since $V_0$ is a vertex cut and $\min\{|V_+|,|V_-|\}\le \lfloor\frac{n-1}{2}\rfloor=\frac{n}{2}-1$.
Then
$$\lambda_2(G^*)=\lambda_1(G[V_+])=\lambda_1(G[V_-])\le \rho^*\left(\frac{n}{2}-1,F\right),$$
a contradiction to Eq.\,(\ref{eq:L2-F}).

\textbf{Case 2.} $e(V_+,V_-)= 1$. 
A similar process as that of Case 2 in Theorem \ref{thm:F-free and odd n} can give that $|V_0|=0$.
So the graph $G^*$ can be viewed as a graph obtained from $G^*[V_+]$ and $G^*[V_-]$ by adding an edge between them.
If $\big||V_+|-|V_-|\big|\ge 2$, then $\min\{|V_+|,|V_-|\}\le \frac{n}{2}-1$. 
By Lemma \ref{lem:connected by negative edges}, we get 
$$\lambda_2(G^*)<\min\big\{\lambda_1(G^*[V_+]),\lambda_1(G^*[V_-])\big\}\le\rho^*\left(\frac{n}{2}-1,F\right),$$
a contradiction to Eq.\,(\ref{eq:L2-F}). So $|V_+|=|V_-|=\frac{n}{2}$. 
By Theorem \ref{thm: add edge in N+/N-}, we know both $G^*[V_+]$ and $G^*[V_-]$ are $F$-saturated. 

Thus $G^*\in\mathcal{E}(H_1,H_2)$ such that $H_1$ and $H_2$ are $F$-saturated and $n(H_1)=n(H_2)=\frac{n}{2}$.
Meanwhile, we know $V_+=V(H_1)$ and $V_-=V(H_2)$ (without less of generality).
By Lemma \ref{lem:connected by negative edges}, we have 
$$\lambda_2(G^*)<\min\big\{\lambda_1(H_1),\lambda_1(H_2)\big\}\le\rho^*\left(\frac{n}{2},F\right).$$
Combining with Lemma \ref{prop:asymptotic value for F}, we obtain $\lambda_2(G^*)=\rho^*\left(\frac{n}{2},F\right)-\textit{\textbf{o}}(1)$.

This completes the proof. 
\end{proof}

\begin{remark}\label{rek:broader application}
    The restriction on $F$ to be of no cut-edges is used to construct the graph $H$ in Proof of Theorem \ref{thm:F-free and odd n} and $G_1'u_1u_2G_2'$ in Proof of Theorem \ref{thm:F-free and even n}, such that they not contain $F$ as a subgraph.
    So if $H$ and $G_1'u_1u_2G_2'$ are $F$-free, we may remove this restriction.
    This means Theorems \ref{thm:F-free and odd n} and \ref{thm:F-free and even n} can be applicable to a broader family of graphs.
\end{remark}

From Theorems \ref{thm:F-free and odd n} and \ref{thm:F-free and even n}, we can prove an upper bound of the second largest eigenvalue of general  $F$-free graphs.

\begin{theorem}\label{thm:general upper bound for F}
Let $G$ be an $F$-free graph on $n\ge 2n_F$ vertices, then 
$$\lambda_2(G)\le \rho^*\left(\left\lfloor\frac{n}{2}\right\rfloor,F\right),$$
and the equality holds if and only if $$G\in\mathcal{I}\left(G^*\Big(\frac{n-1}{2},F\Big),G^*\Big(\frac{n-1}{2},F\Big)\right)\bigcup\left\{H\cup G^*\Big(\frac{n-1}{2},F\Big):\rho(H)\ge \rho^*\Big(\frac{n-1}{2},F\Big)\right\}$$ when $n$ is odd, or $G\cong 2G^*\big(\frac{n}{2},F\big)$ when $n$ is even.
\end{theorem}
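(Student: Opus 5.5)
The plan is to reduce to the connected case, treated in Theorems \ref{thm:F-free and odd n} and \ref{thm:F-free and even n}, by analysing how $\lambda_2$ of a disconnected graph is built from its components. Throughout I use that $\rho^*(m,F)$ is strictly increasing in $m$, as in the earlier proofs. I also use the standing assumption that $F$ has no cut edges, in the form that every edge of a copy of $F$ lies on a cycle; if $F$ is disconnected I would apply this component by component. Consequently, attaching trees to an $F$-free graph by cut edges keeps it $F$-free.

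\emph{Step 1 (a single component).} Let $C$ be a component of $G$ of order $m\le n$. I enlarge $C$ to a connected graph $\widehat C$ of order $n$ by hanging a pendant path of $n-m$ new vertices from some vertex of $C$. By the remark above, $\widehat C$ is $F$-free, and $C$ is an induced subgraph of $\widehat C$. Lemma \ref{lm:Cauchy interlacing theorem} then gives $\lambda_2(C)\le\lambda_2(\widehat C)$, and the connected theorems give $\lambda_2(\widehat C)\le\rho^*(\lfloor n/2\rfloor,F)$. For even $n$ this inequality is strict by Theorem \ref{thm:F-free and even n}, since there $\lambda_2<\min\{\lambda_1(H_1),\lambda_1(H_2)\}$. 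For odd $n$, equality forces $\widehat C\in\mathcal I(G^*(\frac{n-1}{2},F),G^*(\frac{n-1}{2},F))$. Such a graph contains no pendant path attached to a block of $C$ unless $m=n$, because its two halves are $F$-saturated extremal graphs. So equality in this step forces $G=C$ connected and in the family $\mathcal I$. I expect this padding-and-equality bookkeeping to be the main obstacle. In particular, checking that padding creates no copy of $F$, and excluding $m<n$ at equality, is where the cut-edge hypothesis is used. If the degenerate cases resist, I would instead apply Lemma \ref{lm:Cauchy interlacing theorem} to $C-v$ to get $\lambda_2(C)\le\rho^*(m-1,F)$, and combine this with a direct size argument.

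\emph{Step 2 (two different components).} If $G$ is disconnected, its spectrum is the union of the spectra of its components. Hence either $\lambda_2(G)=\lambda_2(C)$ for some component $C$, which Step 1 handles, or $\lambda_2(G)=\lambda_1(C_2)$ for two distinct components $C_1,C_2$ with $\lambda_1(C_1)\ge\lambda_1(C_2)$. In the latter case, Lemma \ref{lm:subgraph} gives $\lambda_1(C_i)\le\rho^*(|C_i|,F)$. Since $\min\{|C_1|,|C_2|\}\le\lfloor n/2\rfloor$, monotonicity yields $\lambda_2(G)\le\rho^*(\lfloor n/2\rfloor,F)$.

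\emph{Step 3 (equality in Step 2).} Equality needs a component $C$ of order exactly $\lfloor n/2\rfloor$ with $\lambda_1(C)=\rho^*(\lfloor n/2\rfloor,F)$, by strict monotonicity. So $C\cong G^*(\lfloor n/2\rfloor,F)$, and the remaining graph $H$ on $\lceil n/2\rceil$ vertices must satisfy $\rho(H)\ge\rho^*(\lfloor n/2\rfloor,F)$. For odd $n$ this gives exactly the second family $\{H\cup G^*(\frac{n-1}{2},F)\}$. For even $n$, $H$ has order $n/2$ and $\rho(H)\le\rho^*(n/2,F)$, which forces $H\cong G^*(n/2,F)$ and hence $G\cong 2G^*(\frac n2,F)$.

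Conversely, all listed graphs attain the bound. For the family $\mathcal I$ this follows from Lemma \ref{lm:Cauchy interlacing theorem} as at the end of the proof of Theorem \ref{thm:F-free and odd n}. For the disjoint unions it follows directly from the spectrum being the union of the component spectra.
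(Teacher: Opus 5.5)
Your Steps 2 and 3 and the inequality in Step 1 are correct. Padding a component by a pendant path up to order $n$ is a legitimate alternative to the paper's route. The paper instead applies Theorems~\ref{thm:F-free and odd n} and~\ref{thm:F-free and even n} to the top component $H_1$ at its own order $|V(H_1)|\le n-1$ and splits on the parity of $|V(H_1)|$; that tacitly needs $|V(H_1)|\ge 2n_F$. Your padding never invokes the connected theorems below order $2n_F$, which is a genuine advantage.

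\textbf{The gap is in the equality case of Step 1 for odd $n$.} You exclude $m<n$ by asserting that a member of $\mathcal{I}\big(G^*(\frac{n-1}{2},F),G^*(\frac{n-1}{2},F)\big)$ cannot be $C$ with a pendant path hung on it, ``because its two halves are $F$-saturated extremal graphs''. Nothing in $F$-saturation or spectral extremality forbids vertices of degree one, and the claim is false in general. Take $F=C_4$ and $k=\frac{n-1}{2}$ even and large. The known spectral extremal graph is then the star $K_{1,k-1}$ with a maximal matching added on its leaves, which leaves one pendant leaf $\ell$ at the center $c$. Join two copies by a new vertex adjacent to the two centers. This gives a $C_4$-free member of $\mathcal{I}$ with $\lambda_2=\rho^*(k,C_4)$. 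It is exactly $C$ plus the pendant edge $c\ell$ of the second copy, where $C$ is the graph with that leaf deleted and $|V(C)|=n-1$. So $\lambda_2(\widehat C)=\rho^*(\frac{n-1}{2},F)$ does not force $m=n$. Your fallback $\lambda_2(C)\le\rho^*(m-1,F)$ does not close this either, since for $\frac{n+1}{2}\le m\le n-1$ the order $m-1$ exceeds $\frac{n-1}{2}$.

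\textbf{The fix is to use parity rather than structure.} If $n$ is odd and $m\le n-1$, pad $C$ only up to order $n-1$, which is even and at least $2n_F$. The padded graph is still connected and $F$-free. The proof of Theorem~\ref{thm:F-free and even n} shows the maximizer of even order $n-1$ satisfies $\lambda_2<\min\{\lambda_1(H_1),\lambda_1(H_2)\}\le\rho^*(\frac{n-1}{2},F)$, so every connected $F$-free graph of order $n-1$ has $\lambda_2<\rho^*(\frac{n-1}{2},F)$. Hence $\lambda_2(C)<\rho^*(\frac{n-1}{2},F)$. Equality with $\lambda_2(G)=\lambda_2(C)$ therefore forces $m=n$, i.e.\ $G=C$, and Theorem~\ref{thm:F-free and odd n} then places $G$ in $\mathcal{I}$. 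This is the padded analogue of the paper's parity split, and it keeps your advantage.

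One smaller point: to justify that padding preserves $F$-freeness, argue directly. Every edge of a copy of $F$ lies on a cycle, so it avoids the bridges, and isolated vertices of $F$ can be re-embedded in $G$ because $|V(G)|\ge|V(\widehat C)|$. Your ``component by component'' remark does not quite say this.
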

\begin{proof}
Let $G$ be a graph with the maximum $\lambda_2(G)$ among all $F$-free graphs on $n$ vertices.

If $G$ is connected, then from Theorems \ref{thm:F-free and odd n} and \ref{thm:F-free and even n} we have $\lambda_2(G)\le \rho^*\left(\left\lfloor\frac{n}{2}\right\rfloor,F\right)$, and the equality holds only if $n$ is odd and $G\in\mathcal{I}\big(G^*(\frac{n-1}{2},F),G^*(\frac{n-1}{2},F)\big)$.

If $G$ is disconnected, then let $G$ be $H_1\cup H_2$ such that $H_1$ is connected and $\rho(H_1)\ge\rho(H_2)$.
The spectrum of $G$ is the union of spectrum of $H_1$ and $H_2$.
So 
$$\lambda_2(G)=\max\big\{\lambda_2(H_1),\rho(H_2)\big\}.$$
Suppose that $\rho(H_2)>\rho^*\left(\left\lfloor\frac{n}{2}\right\rfloor,F\right)$.
We get $\rho(H_1)>\rho^*\left(\left\lfloor\frac{n}{2}\right\rfloor,F\right)$. This implies $|V(H_1)|>\left\lfloor\frac{n}{2}\right\rfloor$ and $|V(H_2)|>\left\lfloor\frac{n}{2}\right\rfloor$ by the definition of $\rho^*\left(\left\lfloor\frac{n}{2}\right\rfloor,F\right)$.
Then $|V(H_1)|+|V(H_2)|\ge2\left\lfloor\frac{n}{2}\right\rfloor+2>n$, a contradiction.
Thus, 
\begin{align*}
    \rho(H_2)\le\rho^*\left(\left\lfloor\frac{n}{2}\right\rfloor,F\right).
\end{align*}
Moreover, note $|V(H_1)|= n-|V(H_2)|\le n-1$, then we know from Theorems \ref{thm:F-free and odd n} and \ref{thm:F-free and even n} that
\begin{align}\label{eq:disconnected lambdaH1-F}
    \lambda_2(H_1)\le\rho^*\left(\left\lfloor\frac{n-1}{2}\right\rfloor,F\right)\le\rho^*\left(\left\lfloor\frac{n}{2}\right\rfloor,F\right).
\end{align}

We further show the inequality in Eq.\,(\ref{eq:disconnected lambdaH1-F}) is strict.
Assume that $\lambda_2(H_1)=\rho^*\left(\left\lfloor\frac{n}{2}\right\rfloor,F\right)$. By Eq.\,(\ref{eq:disconnected lambdaH1-F}), we know $n$ is odd and $\lambda_2(H_1)=\rho^*\left(\frac{n-1}{2},F\right)$.
If $|V(H_1)|$ is odd, then by Theorem \ref{thm:F-free and odd n} we have
$\lambda_2(H_1)\le \rho^*\left(\frac{|V(H_1)|-1}{2},F\right)\le \rho^*\left(\frac{n-3}{2},F\right)$, a contradiction.
If $|V(H_1)|$ is even, then by Theorem \ref{thm:F-free and even n} we have
$\lambda_2(H_1)< \rho^*\left(\frac{|V(H_1)|}{2},F\right)\le \rho^*\left(\frac{n-1}{2},F\right)$, a contradiction too.
So we obtain $\lambda_2(G)\le \rho^*\left(\left\lfloor\frac{n}{2}\right\rfloor,F\right)$, and the equality holds only if $\lambda_2(G)=\rho(H_2)=\rho^*\left(\left\lfloor\frac{n}{2}\right\rfloor,F\right)$.

When $\lambda_2(G)=\rho(H_2)= \rho^*\left(\left\lfloor\frac{n}{2}\right\rfloor,F\right)$, since $\rho(H_1)\ge\rho(H_2)$, it is easy to see that $|V(H_1)|\ge \left\lfloor\frac{n}{2}\right\rfloor$ and
$|V(H_2)|\ge \left\lfloor\frac{n}{2}\right\rfloor$.
If $n$ is even, then $|V(H_1)|=|V(H_2)|= \frac{n}{2}$. So $H_1=H_2\cong G^*(\frac{n}{2},F)$.
If $n$ is odd, then $\frac{n-1}{2}\le|V(H_1)|\le\frac{n+1}{2}$.
In this situation, we have $H_1\cong G^*(\frac{n-1}{2},F)$ if $|V(H_1)|=\frac{n-1}{2}$, or $H_2\cong G^*(\frac{n-1}{2},F)$ if $|V(H_1)|=\frac{n+1}{2}$ (which gives $|V(H_2)|=\frac{n-1}{2}$).
So $G\in \left\{H\cup G^*(\frac{n-1}{2},F):\rho(H)\ge \rho^*\left(\frac{n-1}{2},F\right)\right\}$.

Furthermore, we can check that $\lambda_2(G)= \rho^*\left(\left\lfloor\frac{n}{2}\right\rfloor,F\right)$ for each graph $G\cong 2G^*(\frac{n}{2},F)$ when $n$ is even, or $G\in\mathcal{I}\big(G^*(\frac{n-1}{2},F),G^*(\frac{n-1}{2},F)\big)\bigcup\big\{H\cup G^*(\frac{n-1}{2},F):\rho(H)\ge \rho^*\left(\frac{n-1}{2},F\right)\big\}$ when $n$ is odd.
\end{proof}

\subsection{Color critical graphs}

A graph $H$ is called to be $\chi$-critical if it has chromatic number $\chi +1$ and contains an edge whose deletion reduces the chromatic number to $\chi$.
In 2009, Nikiforov \cite{N09} gave a spectral version of Simonovits' color critical edge theorem (see \cite{S68}), which determined the graph with maximum number of edges in $H$-free graphs.

Let $k\ge 2$, denote by $T_{n,k}$ the $k$-partite Tur\'an graph on $n$ vertices.

\begin{theorem}\cite{N09}\label{thm:color critical graphs-1}
Let $H$ be a $\chi$-critical graph of order $n_0$.
Then there exists an integer $n_H\ge e^{n_0\chi^{
(2\chi+9)(\chi+1)}}$ such that 
$$\rho^*(n,H)=\rho(T_{n,\chi})$$
and $T_{n,\chi}$ is the unique graph with spectral radius $\rho^*(n,H)$ when $n \ge n_H$.
\end{theorem}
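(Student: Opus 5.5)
This is Nikiforov's spectral version of Simonovits' critical edge theorem, cited here from \cite{N09}; in the paper it is imported rather than proved. If I had to prove it, I would follow the standard stability route for spectral Turán problems.

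\textbf{Step 1: spectral Erd\H{o}s--Stone--Simonovits.} Let $G$ be an $H$-free graph of order $n$ maximizing $\rho$. Since $T_{n,\chi}$ is $H$-free (its chromatic number is $\chi<\chi(H)=\chi+1$), we have
$$\rho(G)\ge\rho(T_{n,\chi})\ge\Big(1-\frac1\chi\Big)n-\frac{\chi}{4n}.$$
A spectral Erd\H{o}s--Stone--Simonovits bound (Nikiforov's theorem that $\rho\ge(1-\frac1\chi+\varepsilon)n$ forces a large complete $(\chi+1)$-partite graph $K_{\chi+1}(s,\dots,s)$, which contains $H$) then gives $\rho(G)\le(1-\frac1\chi+o(1))n$.

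\textbf{Step 2: structure of $G$.} Let $\mathbf{x}$ be the Perron vector. Using $\rho^2 x_v=\sum_{u} d(u,v)x_u$ and the near-extremality of $\rho$, show that $e(G)\ge(1-\frac1\chi-\varepsilon)\frac{n^2}{2}$. Simonovits' stability theorem then yields a partition $V=V_1\cup\dots\cup V_\chi$ with $o(n^2)$ edges inside the parts. Next remove the vertices with small Perron entry, or equivalently small degree. A deletion argument shows that removing a vertex $v$ with $x_v$ small changes $\rho$ by at most about $2x_v^2\rho$, and a comparison with $\rho(T_{n-1,\chi})$ shows there are none. From this, every vertex has degree at least $(1-\frac1\chi-\delta)n$.

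\textbf{Step 3: the parts are independent.} This is where color-criticality enters, and it is the main obstacle. Suppose some part $V_i$ contains an edge $uv$. Because degrees are high, $u$ and $v$ have large common neighborhoods in each of the other parts. Those common neighborhoods contain a complete $(\chi-1)$-partite graph with large parts, so $G$ contains $K_\chi(t,\dots,t)$ with one extra edge inside a part. That graph contains $H$, since $H$ minus its critical edge is $\chi$-colorable. This contradicts $H$-freeness, so $G$ is $\chi$-partite.

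\textbf{Step 4: conclusion.} Among $\chi$-partite graphs on $n$ vertices, the maximum spectral radius is attained uniquely by $T_{n,\chi}$. Adding edges strictly increases $\rho$, so $G$ is complete $\chi$-partite, and balancing the part sizes strictly increases $\rho$, so $G\cong T_{n,\chi}$. The quantitative threshold $n_H$ comes from the explicit constants in the supersaturation and stability steps.
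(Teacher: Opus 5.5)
You are right that the paper gives no proof here: the statement is quoted from Nikiforov \cite{N09}, so there is no internal argument to compare with. Your outline is the generic Simonovits-type stability route, not Nikiforov's. He proves a spectral saturation theorem: $\rho(G)>\rho(T_{n,\chi})$ forces a complete $\chi$-partite graph with parts of logarithmic size, about $c\ln n$, plus one edge inside a part. Requiring these parts to have at least $n_0$ vertices is what produces the exponential threshold.

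Steps 1, 3 and 4 are fine in outline, but Step 3 skips two details.
\begin{itemize}
\item High degree alone does not make $u,v$ adjacent to almost everything outside their part, so they must also have few neighbours inside it. Take a max-cut partition. A vertex with linearly many neighbours in every part yields $K_{\chi+1}(1,n_0,\dots,n_0)$, which contains $H$, because the critical edge $ab$ lets $\{a\}$ be a colour class.
\item The part containing $u,v$ must be filled with $n_0-2$ further vertices complete to the chosen $(\chi-1)$-partite piece. A greedy choice supplies them, since $n_0$ is a constant.
\end{itemize}

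\textbf{The genuine gap is in Step 2.} From $\rho(G-v)>\rho(T_{n-1,\chi})$ you get a contradiction only if you already know $\rho^*(n-1,H)=\rho(T_{n-1,\chi})$, i.e.\ the theorem at order $n-1$. The statement fails for small orders: $K_4$ is $C_5$-free and $\rho(K_4)=3>\rho(T_{4,2})=2$. So such an induction has no base, and ``a comparison with $\rho(T_{n-1,\chi})$ shows there are none'' is circular.

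The missing idea is iterated deletion. Do not claim $G$ has no bad vertices; delete them one at a time and track the excess $\rho(G_k)-\rho(T_{n-k,\chi})$.
\begin{itemize}
\item Removing a vertex with $x_v^2\le(1-\eta)/|G_k|$ costs at most about $(1-\frac1\chi)(1-\eta)$ in $\rho$.
\item The Tur\'an increment $\rho(T_{m,\chi})-\rho(T_{m-1,\chi})$ is about $1-\frac1\chi$, so each deletion raises the excess by about $\eta(1-\frac1\chi)$.
\item Step 1 caps the excess at roughly $\varepsilon|G_k|$, so the process stops after $O(\varepsilon n)$ steps.
\item It stops at an induced $H$-free $G_0$ with large minimum degree and $\rho(G_0)\ge\rho(T_{|G_0|,\chi})$, with strict inequality if anything was deleted.
\item Steps 3--4 applied to $G_0$ then force $G_0=G$.
\end{itemize}

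This repair uses near-uniformity of the Perron vector (entries of order $1/\sqrt n$), which you must extract from the stability structure. Without it, ``small entry'' and ``small degree'' are not interchangeable, and the deletion cost need not beat the Tur\'an increment.

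Similarly, deriving $e(G)\ge(1-\frac1\chi-\varepsilon)\frac{n^2}2$ from $\rho$ must use $H$-freeness. A clique on $\lceil(1-\frac1\chi)n\rceil+1$ vertices plus isolated vertices has large $\rho$ but too few edges. Use Nikiforov's spectral stability theorem, or the removal lemma together with $\rho^2\le 2(1-\frac1\chi)e$ for $K_{\chi+1}$-free graphs.

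Finally, your route yields only an inexplicit $n_H$, tower-type if the removal lemma is used. That suffices for the statement as transcribed here, which only asserts that some $n_H$ exists. It does not reproduce Nikiforov's explicit bound $e^{n_0\chi^{(2\chi+9)(\chi+1)}}$, so your closing sentence about the threshold overclaims.
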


By counting color critical subgraphs, Mubayi \cite{M10} extended Simonovits’ color critical edge theorem.
Recently, Ning and Zhai \cite{NZ23} showed a spectral version of Mubayi's result, surely strengthening Nikiforov's result.
Here, we can generalize Nikiforov's result on the spectral radius to the second largest eigenvalue.

It is easy to note the resulting graph $G$ by adding a pendent vertex to any vertex of $T_{\lfloor\frac{n-1}{2}\rfloor,\chi}$ is still of chromatic number $\chi$, so $G$ is $H$-free. 
Theorems \ref{thm:F-free and odd n} and \ref{thm:F-free and even n}, together with Remark \ref{rek:broader application}, could give the extremal result on $\lambda_2(G)$ among all $\chi$-critical graphs $G$.

\begin{theorem}\label{thm:color critical graphs-2}
Let $H$ be a $\chi$-critical graph and $G$ be an $H$-free connected graph on $n\ge 2n_H$ vertices, then 
(1) when $n$ is odd, $$\lambda_2(G)\le \rho^*\left(\frac{n-1}{2},H\right),$$
with the equality holds if and only if $G\in \mathcal{I}\big(T_{\frac{n-1}{2},\chi},T_{\frac{n-1}{2},\chi}\big)$ and $G$ is $F$-free;
(2) when $n$ is even, 
\begin{center}
  $\lambda_2(G^*)=\rho\left(T_{\frac{n-1}{2},\chi}\right)-\textbf{o}(1)$\ \ and \ \    $G^*\in\mathcal{E}(H_1,H_2)$,
\end{center}
where $H_1$ and $H_2$ are $H$-saturated and $n(H_1)=n(H_2)=\frac{n}{2}$.
\end{theorem}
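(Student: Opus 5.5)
The plan is to obtain Theorem \ref{thm:color critical graphs-2} as a specialization of Theorems \ref{thm:F-free and odd n} and \ref{thm:F-free and even n} with $F=H$, feeding in Theorem \ref{thm:color critical graphs-1} to identify the spectral extremal graph. The only hypothesis of those theorems that is not automatic is that $F$ has no cut edges, and a $\chi$-critical graph may well have cut edges. So I would go through Remark \ref{rek:broader application}. There the cut-edge hypothesis is used only to guarantee that the test graphs in the proofs are $F$-free: the members of $\mathcal{I}(G^*(\frac{n-1}{2},F),G^*(\frac{n-1}{2},F))$ in the odd case, $G_1'u_1u_2G_2'$ in the even case, and $G^*(\frac n2,F)uvG^*(\frac n2,F)$ in Lemma \ref{prop:asymptotic value for F}.

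\emph{Step 1: identify the extremal graphs.} Since $n\ge 2n_H$, both $\frac{n-1}{2}\ge n_H$ and $\frac n2-1\ge n_H$ hold, up to the parity bookkeeping. Theorem \ref{thm:color critical graphs-1} then gives $G^*(m,H)=T_{m,\chi}$ uniquely and $\rho^*(m,H)=\rho(T_{m,\chi})$ for the relevant $m$.

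\emph{Step 2: check that the test graphs are $H$-free.} The key observation is that $H$ has chromatic number $\chi+1$, so any graph of chromatic number at most $\chi$ is $H$-free. Each test graph is built from copies of $T_{m,\chi}$, each $\chi$-colourable, joined by a pendant path, a single edge, or a new vertex.
\begin{itemize}
\item A single connecting edge or a pendant vertex can always be accommodated by permuting colours in one component. So $T_{m,\chi}uvT_{m,\chi}$ and $G_1'u_1u_2G_2'$ are $\chi$-colourable, since $\chi\ge 2$.
\item A new vertex $w$ joined to both copies is $\chi$-colourable only if its neighbourhood in each copy avoids some colour class. Otherwise $w$ might create $H$.
\end{itemize}
This is why the statement keeps the side condition ``$G$ is $F$-free'' in the equality case. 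The existence of at least one $H$-free member is enough for the lower bound (\ref{eq:L1-F}): take $w$ adjacent to one vertex in each copy.

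\emph{Step 3: apply the general theorems.} In the odd case, Theorem \ref{thm:F-free and odd n} gives $\lambda_2(G)\le \rho^*(\frac{n-1}{2},H)=\rho(T_{\frac{n-1}{2},\chi})$. Equality holds exactly on the $H$-free members of $\mathcal{I}(T_{\frac{n-1}{2},\chi},T_{\frac{n-1}{2},\chi})$, using uniqueness of the spectral extremal graph. In the even case, Theorem \ref{thm:F-free and even n} together with Lemma \ref{prop:asymptotic value for F} pins $\lambda_2(G^*)$ into $\bigl(\rho^*(\frac n2,H)-\frac 2n,\ \rho^*(\frac n2,H)\bigr)$ and gives $G^*\in\mathcal{E}(H_1,H_2)$ with $H_1,H_2$ both $H$-saturated on $\frac n2$ vertices.

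\emph{Main obstacle.} The even-case value is written as $\rho(T_{\frac{n-1}{2},\chi})-o(1)$, whereas the general theorem yields $\rho(T_{\frac n2,\chi})-o(1)$. The discrepancy needs care. I would interpret the subscript as $\frac n2$, or as $\lfloor\frac{n-1}{2}\rfloor+1$, and note that $\rho(T_{m,\chi})$ changes by $O(1)$ when $m$ shifts by one. So the $o(1)$ form must be stated with $\frac n2$, and I would present it that way. I would also double-check the requirement $n\ge 2n_H$ against the need for $\frac n2-1\ge n_H$ in Step 1, adjusting the threshold to $2n_H+2$ if necessary.
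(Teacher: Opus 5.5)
Your plan follows the paper's route. The paper derives Theorem \ref{thm:color critical graphs-2} by invoking Theorems \ref{thm:F-free and odd n} and \ref{thm:F-free and even n} through Remark \ref{rek:broader application}, with Theorem \ref{thm:color critical graphs-1} giving $G^*(m,H)=T_{m,\chi}$; the only check it writes out is that the pendant-vertex graph is $\chi$-colourable, hence $H$-free. Your version is more careful in several places:
\begin{itemize}
\item You also check the test graph of Lemma \ref{prop:asymptotic value for F}.
\item You use uniqueness of $T_{m,\chi}$ to pass from $\mathcal{I}\big(G^*(\frac{n-1}{2},H),G^*(\frac{n-1}{2},H)\big)$ to $\mathcal{I}\big(T_{\frac{n-1}{2},\chi},T_{\frac{n-1}{2},\chi}\big)$.
\item You correctly read the even-case value as $\rho(T_{\frac n2,\chi})-o(1)$. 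The subscript $\frac{n-1}{2}$ is a typo, since shifting $m$ by one moves $\rho(T_{m,\chi})$ by about $1-\frac1\chi$, which is not $o(1)$.
\item You correctly note that $\frac n2-1\ge n_H$ needs $n\ge 2n_H+2$.
\end{itemize}

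One clause is not justified by your plan: that $H_1$ and $H_2$ are $H$-saturated. You take from the Remark the premise that the cut-edge hypothesis enters \emph{only} through the test graphs, and that premise is inaccurate. The hypothesis is also what makes Case~2 of the proof of Theorem \ref{thm:F-free and even n} work.

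There, Theorem \ref{thm: add edge in N+/N-} and maximality only give that $G^*+e$ contains $F$ for every non-edge $e$ inside $V_+$. To conclude that $G^*[V_+]+e$ contains $F$, the copy of $F$ must stay inside $V_+$. The unique edge $u_0v_0$ between $V_+$ and $V_-$ is a bridge of $G^*+e$, so it is a cut edge of any copy that uses it. Hence for a connected $F$ without cut edges the copy cannot leave $V_+$. A general $\chi$-critical $H$, however, may have cut edges or be disconnected, and then a copy of $H$ can contain $e$ and still extend into $V_-$. Ruling this out means re-embedding the $V_-$ part of the copy inside $G^*[V_+]$, which needs structural information about $G^*[V_+]$ that the general theorems do not supply.

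As written, your argument proves part (1). For part (2) it proves $\rho(T_{\frac n2,\chi})-\frac2n<\lambda_2(G^*)<\rho(T_{\frac n2,\chi})$ and $G^*\in\mathcal{E}(H_1,H_2)$ with $n(H_1)=n(H_2)=\frac n2$. On saturation it gives only the weaker property that $G^*+e\supseteq H$ for each non-edge $e$ inside $V(H_1)$ or $V(H_2)$. To obtain genuine $H$-saturation you have two options:
\begin{itemize}
\item restrict that clause to connected $H$ without cut edges, which covers $K_{r+1}$, books and odd cycles; or
\item add an argument that $H_1,H_2$ are close to $T_{\frac n2,\chi}$ so the rerouting is possible, for example a spectral gap for non-$\chi$-partite $H$-free graphs as the paper uses in those special cases.
\end{itemize}
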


When the graph $H$ is given in detail, we may determine the value $n_H$, and even the exact $\lambda_2(G^*)$ and the structure of $G^*$ for even $n$.
Here we consider some color critical graphs of great attention, such as complete graph $K_{r+1}$, Book graph $B_{k+1}$ and odd cycle $C_{k+1}$.

\vskip0.2in
\noindent\textit{1. Complete graphs}

Neumaier \cite{N82} and Shao \cite{S95} determined the maximum of the second eigenvalue for tress with odd vertices and even vertices, respectively. 
Powers \cite{P88} extended the results from trees to bipartite graphs, and obtained  
\begin{align*}
    \lambda_2(G)\le\begin{cases}
    k,& \textrm{if}\ n=4k\ \textrm{or}\ 4k+1;\\
    \sqrt{k(k+1)},& \textrm{if}\ n=4k+2\ \textrm{or}\ 4k+3,
    \end{cases}
\end{align*}
where $n$ is the order of $G$ and $k=\left\lfloor\frac{n}{4}\right\rfloor$.

For general graphs, Hong \cite{H88} gave the upper bounds.

\begin{theorem}\cite{H88}\label{thm:general graph}
Let $G$ be a graph with $n$ vertices. Then $\lambda_2(G)\le \frac{n-2}{2}$. 
Equality holds if and only if $G=K_{\frac{n}{2}}\bigcup K_{\frac{n}{2}}$.
Furthermore, if $G$ is a connected graph, then $\lambda_2(G)\le\frac{\sqrt{n^2-4}}{2}-1$.
\end{theorem}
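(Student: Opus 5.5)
We prove the general bound first and then the connected bound. Let $\textbf{z}$ be a unit eigenvector for $\lambda_2(G)$ and split $V(G)=V_+\cup V_-\cup V_0$ as in Section \ref{section:2}. If $G$ is disconnected with components $G_1,\dots,G_k$, then $\lambda_2(G)$ is either $\lambda_2(G_i)$ for some $i$ or the second largest among the spectral radii $\rho(G_i)$. In the latter case there are two components $G_i,G_j$ with $\rho(G_i),\rho(G_j)\ge\lambda_2(G)$, hence $\lambda_2(G)\le\min\{|V(G_i)|,|V(G_j)|\}-1\le\frac{n}{2}-1$. Equality forces both components to be $K_{n/2}$ and no further vertices, i.e. $G=K_{\frac n2}\cup K_{\frac n2}$. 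The former case reduces, by induction on $n$, to the connected case on fewer vertices, which gives a strictly smaller bound.

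For connected $G$, we take $G$ maximizing $\lambda_2$ among connected graphs of order $n$. By Theorem \ref{thm: add edge in N+/N-}, both $G[V_+]$ and $G[V_-]$ are complete. By Theorem \ref{thm:remove edge between N+ and N-}, every edge of $E(V_+,V_-)$ is a cut edge, because otherwise deleting it would keep $G$ connected and increase $\lambda_2$.

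\emph{Case 1: $V_+$ and $V_-$ are joined by a single edge and $V_0=\varnothing$.} Lemma \ref{lem:connected by negative edges} gives $\lambda_2<\min\{|V_+|,|V_-|\}-1$. For odd $n$ this is at most $\frac{n-3}{2}<\frac{\sqrt{n^2-4}}{2}-1$.

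\emph{Case 2: $e(V_+,V_-)=0$.} Then $V_0$ is a cut, and Lemma \ref{lem:null cut-set} gives $\lambda_2=|V_+|-1=|V_-|-1$. Hence $\lambda_2\le\frac{n-|V_0|}{2}-1\le\frac{n-1}{2}-1$.

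So the candidate extremal graph for even $n$ is $K_{n/2}uvK_{n/2}$. The remaining configurations, in which $V_0\neq\varnothing$ is attached to both sides, are excluded by the same three-component argument as in Claim \ref{clm:e(+,-) le 1-F}.

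It remains to compute $\lambda_2(K_mu vK_m)$ with $m=n/2$, and this computation is the main step. By the swap symmetry, the eigenvectors split into symmetric and antisymmetric ones. In the antisymmetric case we take the quotient with the vertex $u$ and the $m-1$ other vertices of one clique. With values $a$ at $u$ and $b$ at the other vertices, the eigenequations are $\lambda a=(m-1)b-a$ and $\lambda b=a+(m-2)b$. These give $(\lambda+1)(\lambda-m+2)=m-1$. Substituting $\mu=\lambda+1$ turns this into $\mu^2-(m-1)\mu-(m-1)=0$, so the largest root is
$$\mu=\frac{(m-1)+\sqrt{(m-1)^2+4(m-1)}}{2}=\frac{m-1+\sqrt{m^2+2m-3}}{2}.$$
The value $\frac{\sqrt{n^2-4}}{2}-1$ in the statement should correspond to $\lambda_2$, i.e. to $\mu-1=\frac{m-3+\sqrt{(m+3)(m-1)}}{2}$. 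This expression differs from the stated one. Since the statement is Hong's theorem \cite{H88}, we would check the normalization of this formula against \cite{H88}. In the proof we would simply show that $\lambda_2$ of every connected graph is at most $\lambda_2$ of the extremal configuration. For odd $n$ we would compare the two cases directly: in Case 2 with $|V_0|=1$ we get $\frac{n-3}{2}$, and Case 1 is smaller still. Both values lie below the even-$n$ bound, which is increasing in $n$. This comparison, together with matching the constant to Hong's formula, is where we expect the most care to be needed.
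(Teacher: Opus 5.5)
Your reduction to cliques is reasonable. Note that the paper gives no proof of this statement; it only quotes it from Hong. However, your proof stops before the stated inequality is derived, and the reason you give for stopping is a misreading. The statement does not claim that $\frac{\sqrt{n^2-4}}{2}-1$ is attained by a connected graph, so there is no normalization to reconcile with Hong. The paper itself remarks that these bounds are not always attainable for connected graphs.

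What is missing is the comparison showing that your extremal value lies below the stated bound. For $n=2m$ with $m\ge 2$ it reads
\[
\frac{m-3+\sqrt{(m-1)(m+3)}}{2}<\sqrt{m^2-1}-1
\iff (m-1)+\sqrt{(m-1)(m+3)}<2\sqrt{(m-1)(m+1)}
\iff \sqrt{m-1}+\sqrt{m+3}<2\sqrt{m+1}.
\]
The last inequality is strict concavity of the square root. Alternatively, square both sides: the left side gives $2m+2+2\sqrt{(m+1)^2-4}<4(m+1)$. For $m=1$ the only connected graph is $K_2$, and both sides equal $-1$. For odd $n\ge 3$ you already have $\frac{n-3}{2}<\frac{\sqrt{n^2-4}}{2}-1$.

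With this line added, your argument proves the connected bound. It even proves the sharper even-order value $\lambda_2(K_{n/2}uvK_{n/2})$. Without it, the statement is not established. Your treatment of the general bound also invokes the connected bound on components, so that part stays open until the comparison is done.

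Three smaller points need tidying.

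\begin{itemize}
\item For even $n$ you must also dismiss $K_auvK_b$ with $a\ne b$. There $\lambda_2<\min\{a,b\}-1\le m-2$, while $\lambda_2(K_muvK_m)>m-2$ because $\sqrt{(m-1)(m+3)}>m-1$.
\item Excluding $e(V_+,V_-)\ge 2$, or $e(V_+,V_-)=1$ with $V_0\neq\varnothing$, does not need the three-component argument of Claim~\ref{clm:e(+,-) le 1-F}. That argument does not literally apply once $G[V_+]$ and $G[V_-]$ are cliques. A second cross edge puts the first one on a cycle through the two cliques. Likewise, a vertex of $V_0$ adjacent to both sides does the same; such a vertex exists by connectivity and the eigen-equation $\sum_{y\sim w}\textbf{x}_y=0$. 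Either way this contradicts the cut-edge property from Theorem~\ref{thm:remove edge between N+ and N-}.
\item In the quotient computation, state that the vectors which are non-constant on the $m-1$ non-bridge vertices of a clique all have eigenvalue $-1$. The symmetric quotient has roots $\lambda_1$ and a negative number. Together these confirm that the larger antisymmetric root really is $\lambda_2$.
\end{itemize}
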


These bounds are not always attainable for connected graphs.
Using graph partitioning, Zhai et al. \cite{ZLW12} reproved Powers and Hong's bounds for bipartite graphs and connected graphs, and completely characterized extremal graphs attaining these bounds.

Note that a graph containing $K_{r+1}$ as a subgraph has chromatic number $\chi(G)\ge r+1$.
We show the maximum value of the second largest eigenvalue for $K_{r+1}$-free graphs. 
Therefore it gives the maximum $\lambda_2(G)$ for 
$\chi$-part graphs $G$, encompassing the previous results in connected graphs and connected bipartite graphs.

We introduce the following result obtained by Nikiforov \cite{N02,N07}, which proved the maximum spectral radius among $K_{r+1}$-free graphs.

\begin{theorem}\cite{N02, N07}\label{thm:clique-1}
If $G$ is a $K_{r+1}$-free graph of order $n\ge r$, then $$\rho(G)<\rho(T_{n,r}),$$ unless $G=T_{n,r}$.
\end{theorem}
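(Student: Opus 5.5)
The plan is to fix a graph $G$ that maximizes $\rho$ among all $K_{r+1}$-free graphs of order $n$, and to show that $G\cong T_{n,r}$. Every step below will be strict unless the graph is already Tur\'an, and this gives both the inequality and the uniqueness. Adding edges to a disconnected $K_{r+1}$-free graph can keep it $K_{r+1}$-free while joining components, so by Lemma \ref{lm:subgraph} together with Perron--Frobenius monotonicity we may take $G$ connected. Let $\textbf{x}>\textbf{0}$ be its unit Perron vector and $\rho=\rho(G)$.

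\emph{Step 1 (Zykov symmetrization).} Take nonadjacent $u,v$ with $\textbf{x}_u\ge \textbf{x}_v$. Let $G'$ be obtained by deleting all edges at $v$ and joining $v$ to $N_G(u)$, so that $v$ becomes a clone of $u$. Any clique of $G'$ contains at most one of $u,v$, so $G'$ is still $K_{r+1}$-free. Since $uv\notin E(G)$, we have $\sum_{w\in N(u)}\textbf{x}_w=\rho\textbf{x}_u$ with $v\notin N(u)$, and therefore
$$\textbf{x}^TA(G')\textbf{x}-\textbf{x}^TA(G)\textbf{x}=2\textbf{x}_v\Big(\sum_{w\in N(u)}\textbf{x}_w-\sum_{w\in N(v)}\textbf{x}_w\Big)=2\rho\,\textbf{x}_v(\textbf{x}_u-\textbf{x}_v)\ge 0.$$
Theorem \ref{thm:C-F thm} then gives $\rho(G')\ge\rho$. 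If equality held, $\textbf{x}$ would also be a Perron eigenvector of $G'$. Comparing the eigen-equations of $G$ and $G'$ at a vertex $w\in N(u)\triangle N(v)$ forces $\textbf{x}_u=\textbf{x}_v$ and $N(u)=N(v)$. Hence, in the maximizer $G$, any two nonadjacent vertices have the same neighbourhood.

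\emph{Step 2 (complete multipartite structure).} By Step 1, non-adjacency is an equivalence relation on $V(G)$: transitivity holds because $u\not\sim v$ and $v\not\sim w$ give $N(u)=N(v)\not\ni w$. So $G$ is a complete $k$-partite graph with parts of sizes $n_1,\dots,n_k$, and $k\le r$ because $G$ is $K_{r+1}$-free. If $k<r$ and some part has at least two vertices, splitting that part adds edges without creating a $K_{r+1}$. This strictly increases $\rho$, contradicting maximality. Thus $k=r$ (note $n\ge r$).

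\emph{Step 3 (balancing).} For a complete multipartite graph, the Perron vector is constant on each part. It follows that $\rho$ is the largest root of $\sum_{i}\frac{n_i}{\lambda+n_i}=1$, and that $\textbf{x}$ is larger on smaller parts. Suppose $n_1\ge n_2+2$. Move one vertex $v$ from part $1$ to part $2$ and compare Rayleigh quotients with the old $\textbf{x}$. The vertex $v$ gains the neighbours in part $1$ and loses those in part $2$, so the change is $2\textbf{x}_v\big((n_1-1)a-n_2b\big)$, where $a$ and $b$ are the entries on parts $1$ and $2$. The eigen-equations give $\rho a=\sum_{j\ne1}n_jc_j$ and $\rho b=\sum_{j\ne2}n_jc_j$, where $c_j$ is the entry on part $j$. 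Hence $(\rho+n_1)a=(\rho+n_2)b$, which yields $(n_1-1)a-n_2b>0$ when $n_1\ge n_2+2$. This strict increase contradicts maximality, so all parts differ in size by at most one and $G\cong T_{n,r}$.

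The main obstacle I expect is the strictness and equality analysis in Step 1. I need to verify carefully that $\rho(G')=\rho(G)$ really forces $N(u)=N(v)$; I would do this by subtracting $A(G)\textbf{x}=\rho\textbf{x}$ from $A(G')\textbf{x}=\rho\textbf{x}$ coordinatewise. I also need to handle the case where $G'$ becomes disconnected. If $\textbf{x}$ is still an eigenvector for $\rho$ in $G'$, positivity of $\textbf{x}$ already gives the needed contradiction, so connectivity of $G'$ should not be required.
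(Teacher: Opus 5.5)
The paper gives no proof of this statement. It quotes it from Nikiforov \cite{N02,N07} and uses it only as a black box, to set $\rho^*(n,K_{r+1})=\rho(T_{n,r})$ and $G^*(n,K_{r+1})=T_{n,r}$. So there is no in-paper argument to compare with. Your proposal is a correct, self-contained proof that uses only Courant--Fischer and Perron--Frobenius, which are already in the paper's preliminaries; the citation buys brevity, your argument buys self-containedness.

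\begin{itemize}
\item \emph{Step 1.} The clique check is right, since $G'-v=G-v$. The identity $\mathbf{x}^TA(G')\mathbf{x}-\rho=2\rho\,\mathbf{x}_v(\mathbf{x}_u-\mathbf{x}_v)$ is right. The equality analysis is also right, and you are correct that it needs no connectivity of $G'$: a unit vector attaining the maximum Rayleigh quotient of any symmetric matrix is an eigenvector for its largest eigenvalue.
\item \emph{Step 2.} Non-adjacency being an equivalence relation gives the complete multipartite structure, and splitting a part forces exactly $r$ parts.
\item \emph{Step 3.} Balancing then finishes. Because every step is a strict improvement for a maximizer, every maximizer is $T_{n,r}$, which gives both the inequality and uniqueness.
\end{itemize}

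Add three one-line justifications when you write it up; none of them is a real gap.
\begin{itemize}
\item \emph{Step 3.} The relation $(\rho+n_1)a=(\rho+n_2)b$ turns $(n_1-1)a-n_2b>0$ into $(n_1-1-n_2)\rho>n_2$. You therefore need $\rho>n_2$. This holds because $K_{n_1,n_2}\subseteq G$ with $n_1>n_2\ge 1$, so $\rho\ge\sqrt{n_1n_2}>n_2$.
\item \emph{Connectivity.} The added edge is a bridge, so it lies in no $K_{r+1}$; this argument needs $r\ge 2$. The case $r=1$ is trivial because $T_{n,1}$ is edgeless.
\item \emph{Strict increase.} To get a strict increase of $\rho$, attach the new edge to a component whose spectral radius equals $\rho(G)$.
\end{itemize}
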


The extremal graph $T_{n,r}$ with spectral radius $\rho^*(n,K_{r+1})$ from Theorem \ref{thm:clique-1} is $r$-partite.
Refining Nikiforov's theorem, Li and Peng \cite{LP22} determined the maximum spectral radius of graphs among non-$r$-partite $K_{r+1}$-free graphs.

Let $U_1,\ldots, U_r$ be color sets of $T_{n,r}$ such that $|U_1|\le |U_2|\le\cdots\le |U_r|$.
Let $v\in U_1$ and $u,w\in U_n$, denoted by $Y_{n,r}$ the graph obtained from $T_{n,r}$ by embedding an edge $uv$, and removing $vw$ and $uv'$ for each vertex $v'\in U_1\backslash\{v\}$.
That is $Y_{n,r}=T_{n,r}+uw-wv-\sum_{v'\in U_n\backslash\{v\}}uv'$.

\begin{theorem}\cite{LP22}\label{thm:clique-non-r-partite-1}
Let $G$ be a non-$r$-partite $K_{r+1}$-free graph of order $n$. Then
$$\rho(G)\le\rho(Y_{n,r})$$
with the equality holds if and only if $G =Y_{n,r}$.
\end{theorem}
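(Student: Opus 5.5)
The plan is a spectral stability argument followed by local surgery guided by the Perron vector. Let $G$ maximize $\rho$ among non-$r$-partite $K_{r+1}$-free graphs of order $n$, and let $\textbf{x}$ be its positive unit Perron vector. Since $Y_{n,r}$ is itself non-$r$-partite and $K_{r+1}$-free, we have $\rho(G)\ge\rho(Y_{n,r})$. A direct Rayleigh-quotient comparison with $T_{n,r}$ should give $\rho(Y_{n,r})\ge\rho(T_{n,r})-O(1/n)$ (only $O(n)$ edges are changed, and Perron entries are of order $n^{-1/2}$). So $\rho(G)\ge (1-\frac1r)n-O(1)$.

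First I will invoke a spectral stability theorem of Nikiforov type. It says that a $K_{r+1}$-free graph with $\rho\ge(1-\frac1r-\varepsilon)n$ differs from $T_{n,r}$ in at most $\delta n^2$ edges. This yields a partition $V_1\cup\cdots\cup V_r$ maximizing the number of crossing edges, with each $|V_i|=\frac nr+o(n)$ and $o(n^2)$ inner edges. Using the eigen-equation $\rho x_v=\sum_{w\sim v}x_w$ together with the near-uniformity of $\textbf{x}$, I will clean this up in three steps.
\begin{itemize}
\item The set $L$ of vertices of degree at most $(1-\frac1r-\varepsilon)n$ is small. If $L$ were nonempty, deleting such a vertex and re-adding it as a full vertex of a suitable part (keeping the graph non-$r$-partite) would increase $\rho$, contradicting maximality.
\item Consequently, every vertex has only $o(n)$ neighbours in its own part.
\item Two disjoint inner edges, or an inner edge together with a vertex having many own-part neighbours, would force a $K_{r+1}$. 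This uses the standard greedy common-neighbourhood argument across the $r$ dense parts.
\end{itemize}
The upshot is that all inner edges lie in one part, say $V_1$, and form a small structure.

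Next, maximality combined with Rayleigh-quotient edge-switching pins the structure down. Adding a missing crossing edge strictly increases $\rho$, so any missing crossing edge must be forced by $K_{r+1}$-freeness. I will show there is exactly one inner edge $uw\subseteq V_1$; a star or triangle inside $V_1$ costs more forced deletions than it gains. For each other part $V_j$, the common neighbourhood of $u$ and $w$ in $V_j$ must be empty for at least one $j$, or else a $K_{r+1}$ appears. The cheapest way to achieve this is for one endpoint, say $u$, to lose all its neighbours in a single part $V_j$. Comparing $x_u$-weighted losses then shows that $j$ should be the smallest part and that $u$ should keep every other crossing edge. Finally, a balancing argument shows the part sizes must be those of $T_{n,r}$: moving a vertex from a larger to a smaller part raises $\rho$, by the standard $\textbf{x}$-comparison. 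After these reductions $G\cong Y_{n,r}$, and strictness of each switching step gives uniqueness.

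The main obstacle is the last optimization. Several candidate configurations differ from one another by only $O(1)$ edges and have spectral radii within $O(1/n)$ of each other: which endpoint loses edges, which part it loses them to, and whether the part containing the inner edge is the largest or the smallest. Crude entry estimates $x_v\approx n^{-1/2}$ cannot separate them. Here I would first try an exact quotient-matrix (equitable partition) computation for each candidate. Each candidate has a small equitable partition, so its $\rho$ is the largest root of a low-degree polynomial, and I would compare the characteristic polynomials evaluated at $\rho(Y_{n,r})$. This should be feasible because the candidates differ only in the sizes of one or two classes.
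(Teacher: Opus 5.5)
\paragraph{The gap: the first clean-up bullet is false.}
The paper gives no proof of this statement; it is imported from Li and Peng. So your outline has to stand on its own, and it breaks at the first clean-up bullet. You assert that the maximizer has no vertex of degree at most $(1-\frac1r-\varepsilon)n$, because such a vertex could be deleted and re-added ``as a full vertex \dots\ keeping the graph non-$r$-partite''.

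This fails for the target graph itself: in $Y_{n,r}$ the vertex $u$ has degree $n-|U_1|-|U_r|+2\approx(1-\frac2r)n$. It fails for \emph{every} graph in the class. By the Andr\'asfai--Erd\H{o}s--S\'os theorem, a $K_{r+1}$-free graph with $\delta(G)>\frac{3r-4}{3r-1}n$ is $r$-partite, and $(1-\frac1r-\varepsilon)n>\frac{3r-4}{3r-1}n$ as soon as $\varepsilon<\frac{1}{r(3r-1)}$. Your repair move is unavailable exactly when $G-v$ is $r$-partite, which is the extremal situation.

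The outline is also internally inconsistent here. With the minimum degree you claim, your third bullet would rule out \emph{every} inner edge, not just two disjoint ones, which would force $G$ to be $r$-partite.

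The same blind spot appears in your endgame. If $u$ lost all its neighbours in $V_j$, moving $u$ into $V_j$ would give a proper $r$-colouring. In $Y_{n,r}$, $u$ keeps exactly one neighbour $v$ in the smallest part, and the other endpoint $w$ loses $v$.

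\paragraph{What is missing.}
You need a substantive use of the non-$r$-partite hypothesis, which carries the whole content of this refinement. The max-cut/greedy argument does bound own-part degrees for all vertices. It also shows that no inner edge has both ends outside $L$, so $G-L$ is $r$-partite. From there you must:
\begin{itemize}
\item prove $|L|=1$, say $L=\{u\}$;
\item solve the attachment problem for $u$ onto the nearly complete $r$-partite graph $G-u$, whose $r$-colouring is essentially unique.
\end{itemize}
Non-$r$-partiteness forces $N(u)$ to meet every colour class, while $K_{r+1}$-freeness forces $N(u)$ to be $K_r$-free. Hence some edges among the neighbours of $u$ must be missing. The extremal configuration to establish is $N(u)\cap V_i=\{v\}$, $N(u)\cap V_j=\{w\}$ and $vw\notin E(G)$.

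\paragraph{Two smaller points.}
First, $\rho(Y_{n,r})\ge\rho(T_{n,r})-O(1/n)$ is wrong. About $\frac nr$ deleted edges, each weighted by $\Theta(\frac1n)$, cost $\Theta(1)$, and Claim~\ref{clm:gap-1} shows $\rho(T_{n,r})-\rho(Y_{n,r})>\frac{1}{5r}$. This does not hurt your stability input, which only needs $\rho(G)\ge(1-\frac1r)n-O(1)$.

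Second, spectral stability gives the result only for $n\ge n_0(r)$, with $n_0$ typically enormous. The statement carries no such restriction, and Theorem~\ref{thm:complete graph-2} applies it at order $\frac n2\ge 5r$. So even a repaired version of your route proves only an asymptotic form.
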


Before giving the extremal results on the second largest eigenvalue of $K_{r+1}$-free graphs, we need two necessary lemmas.

\begin{lemma}\cite{GWL19}\label{lem:remove pendant vertex-1}
    Let $G$ be a connected graph and $v$ be a pendant vertex of $G$. Then
    $$\rho^2(G)\le\rho^2(G-v)+1.$$
\end{lemma}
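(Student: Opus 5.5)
The plan is a Rayleigh-quotient computation with the Perron vector of $G$ and the second power of the adjacency matrix: compare $\rho^2(G)$ with a quadratic form in $A(G-v)^2$, and bound the leftover terms by $1$.

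\emph{Setup and expansion.} Let $\textit{\textbf{x}}$ be the unit Perron vector of $G$, write $\rho=\rho(G)$, let $u$ be the unique neighbor of $v$, and set $B=A(G-v)$. Let $\textit{\textbf{x}}'$ be $\textit{\textbf{x}}$ restricted to $V(G)\setminus\{v\}$, so $\|\textit{\textbf{x}}'\|^2=1-\textit{\textbf{x}}_v^2$. The eigenequation at $v$ gives $\rho\textit{\textbf{x}}_v=\textit{\textbf{x}}_u$. For $w\neq v$ I expand $(A(G)^2\textit{\textbf{x}})_w=\sum_{y\sim w}(A(G)\textit{\textbf{x}})_y$. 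When $y\neq v$ we have $(A(G)\textit{\textbf{x}})_y=(B\textit{\textbf{x}}')_y+[y=u]\textit{\textbf{x}}_v$, and the term $y=v$ appears only when $w=u$, contributing $\textit{\textbf{x}}_u$. Hence
$$\rho^2\textit{\textbf{x}}_w=(B^2\textit{\textbf{x}}')_w+[w\sim u,\,w\neq v]\,\textit{\textbf{x}}_v+[w=u]\,\textit{\textbf{x}}_u \qquad (w\neq v).$$

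\emph{Summing against $\textit{\textbf{x}}'$.} Multiply by $\textit{\textbf{x}}_w$ and sum over $w\neq v$. Use $\sum_{w\sim u,w\neq v}\textit{\textbf{x}}_w=\rho\textit{\textbf{x}}_u-\textit{\textbf{x}}_v$ and $\textit{\textbf{x}}_u=\rho\textit{\textbf{x}}_v$. This gives
$$\rho^2(1-\textit{\textbf{x}}_v^2)=\textit{\textbf{x}}'^TB^2\textit{\textbf{x}}'+2\rho^2\textit{\textbf{x}}_v^2-\textit{\textbf{x}}_v^2 .$$
By Theorem \ref{thm:C-F thm}, $\textit{\textbf{x}}'^TB^2\textit{\textbf{x}}'\le\rho^2(G-v)(1-\textit{\textbf{x}}_v^2)$, since $B^2$ has largest eigenvalue $\rho^2(G-v)$. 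Rearranging, it suffices to show $(2\rho^2-1)\textit{\textbf{x}}_v^2\le 1-\textit{\textbf{x}}_v^2$, which is equivalent to $2\textit{\textbf{x}}_u^2\le 1$. (The case $1-\textit{\textbf{x}}_v^2=0$ cannot occur, since then $\textit{\textbf{x}}_u=0$, contradicting positivity of the Perron vector.)

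\emph{Main obstacle: showing $\textit{\textbf{x}}_u^2\le\frac12$.} This is the one non-mechanical step. By Cauchy--Schwarz, $\rho^2\textit{\textbf{x}}_u^2=\big(\sum_{w\sim u}\textit{\textbf{x}}_w\big)^2\le d_u\sum_{w\sim u}\textit{\textbf{x}}_w^2\le d_u(1-\textit{\textbf{x}}_u^2)$. On the other hand, the star $K_{1,d_u}$ centered at $u$ is a subgraph of $G$, so Lemma \ref{lm:subgraph} gives $\rho^2\ge d_u$. Combining the two, $\textit{\textbf{x}}_u^2\le 1-\textit{\textbf{x}}_u^2$, that is, $\textit{\textbf{x}}_u^2\le\frac12$. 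This completes the inequality $\rho^2(G)\le\rho^2(G-v)+1$.
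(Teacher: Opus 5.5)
Your proof is correct. The identity $\rho^2(1-\textit{\textbf{x}}_v^2)=\textit{\textbf{x}}'^TB^2\textit{\textbf{x}}'+(2\rho^2-1)\textit{\textbf{x}}_v^2$ is right; it is just $\textit{\textbf{x}}^TA(G)^2\textit{\textbf{x}}=\rho^2$ split into the $B^2$ part and the walks through $v$. The reduction to $2\textit{\textbf{x}}_u^2\le 1$ is exact, and your Cauchy--Schwarz plus star-subgraph argument proves that bound. One step deserves a half-sentence of justification: saying $B^2$ has largest eigenvalue $\rho^2(G-v)$ uses $|\lambda_{\min}(B)|\le\lambda_1(B)$, which holds by Perron--Frobenius since $B\ge 0$.

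The paper gives no argument of its own for this lemma; it simply cites \cite{GWL19}. Within the paper, the fastest derivation is from Lemma \ref{lem:coalescence}. Here $G$ is the coalescence of $G-v$ and $K_2$ at $u$, so $\rho^2(G)\le\rho^2(G-v)+\rho^2(K_2)=\rho^2(G-v)+1$, with equality if and only if $G$ is a star. That is a one-liner, but it rests on a stronger imported result.

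Your route is self-contained: it needs only the Rayleigh quotient of $A(G)^2$, Perron--Frobenius, and the standard bound $\textit{\textbf{x}}_u^2\le\frac12$ on Perron entries. It also gives the equality case cheaply. Equality forces $2\textit{\textbf{x}}_u^2=1$, hence $\rho^2=d_u$. Since $G$ is connected and contains $K_{1,d_u}$, strict monotonicity of $\rho$ under proper subgraphs then gives $G\cong K_{1,d_u}$.
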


\begin{lemma}\label{lem:complete multipartite graph for even n}
Suppose $H_1$ and $H_2$ are complete $r$-partite graphs on $\frac{n}{2}$ where $n$ is even and $r\ge 2$. 
If $G\in \mathcal{E}(H_1,H_2)$, then
$$\lambda_2(G)\le \lambda_2(T_{\frac{n}{2},r}uvT_{\frac{n}{2},r}),$$
where $u$ (and $v$) is a vertex of $T_{\frac{n}{2},r}$ with the minimum degree.
Equality holds if and only if $G=T_{\frac{n}{2},r}uvT_{\frac{n}{2},r}$.
\end{lemma}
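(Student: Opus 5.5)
We need to show that among $G=H_1uvH_2$ with $H_1,H_2$ complete $r$-partite on $m=\frac{n}{2}$ vertices, $\lambda_2$ is maximized only by the balanced Turán graphs joined at minimum-degree vertices. The plan is to use a secular-equation (rank-two perturbation) description of $\lambda_2(G)$ and show it is monotone in the two relevant parameters: the spectral radii of $H_1,H_2$, and the Perron entries at the endpoints of the bridge.

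\textbf{Step 1: the secular function.} For $G=H_1uvH_2$, the characteristic polynomial satisfies (Schwenk, as in \cite{S74})
$$\phi(G,\lambda)=\phi(H_1,\lambda)\phi(H_2,\lambda)-\phi(H_1-u,\lambda)\phi(H_2-v,\lambda).$$
For $\lambda$ above $\max\{\lambda_2(H_1),\lambda_2(H_2)\}$ and not equal to $\rho(H_i)$, this is equivalent to
$$1=f_1(\lambda)f_2(\lambda),\qquad f_1(\lambda)=\big((\lambda I-A(H_1))^{-1}\big)_{uu},\quad f_2(\lambda)=\big((\lambda I-A(H_2))^{-1}\big)_{vv}.$$
For a complete multipartite graph, $\lambda_2\le 0$, and $\rho(H_i)>0$ is simple. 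By interlacing, $\lambda_2(G)\in[\min\rho(H_i)-1,\min\rho(H_i)]$, so only the region near the Perron roots matters.

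\textbf{Step 2: explicit resolvent entries.} Write part sizes $a_1,\dots,a_r$ for $H_1$. The equitable quotient gives
$$f_1(\lambda)=\frac{1}{\lambda+a_j}\cdot\frac{1}{1-\sum_k \frac{a_k}{\lambda+a_k}}\cdot\frac{\lambda+\dots}{\dots},$$
where $u$ lies in part $j$. This is an elementary rational function of $\lambda$ and the $a_k$; I would derive it by solving $(\lambda I-A)x=e_u$ with $x$ constant on parts. Then $\lambda_2(G)$ is the largest root below $\min\rho(H_i)$ of $f_1f_2=1$, where each $f_i$ is positive and decreasing on the relevant interval with a pole at $\rho(H_i)$.

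\textbf{Step 3: monotonicity.} I would show two things.

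(a) Balancing the parts of $H_i$ (moving a vertex from a larger part to a smaller one) increases $\rho(H_i)$, since Turán is the unique maximizer by Theorem \ref{thm:clique-1}. It also shifts the pole of $f_i$ to the right. I need $f_1f_2$ evaluated at any fixed $\lambda$ near $\lambda_2$ to decrease, so the root moves right.

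(b) For fixed $H_i$, $f_i(\lambda)$ near the pole behaves like $\frac{x_u^2}{\rho-\lambda}$ plus a bounded term. A smaller Perron entry $x_u$, which for complete multipartite graphs means $u$ in a largest part, i.e. minimum degree, makes $f_i$ smaller and hence pushes $\lambda_2$ up. This matches the heuristic from Lemma \ref{prop:asymptotic value for F}, where the deficit is of order $x_u^2$.

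Combining (a) and (b), we first reduce to $u,v$ in largest parts, then balance both graphs, reaching $T_{\frac n2,r}uvT_{\frac n2,r}$. Each strict move gives the uniqueness of equality.

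\textbf{Main obstacle.} The hardest part is (a): the balancing move changes both the pole location and the residue $x_u^2$ simultaneously, so monotonicity of $\lambda_2$ is not immediate. I would handle this by direct comparison. Let $\theta=\lambda_2(G)$ for an unbalanced $H_1$, and let $H_1'$ be the rebalanced graph; I would verify $f_1'(\theta)<f_1(\theta)$ using the explicit formula from Step 2. A cruder alternative is also available. An unbalanced $r$-partite graph has $\rho\le\rho(T_{m,r})-c/m$ with $c$ bounded below, while the bridge costs only $O(1/m)$ with a smaller constant $x_u^2\approx 1/m$. Checking these constants, together with small-$m$ cases, may be needed if the exact comparison is messy.
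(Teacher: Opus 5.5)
There is a genuine gap: Step 3(a) carries the whole content of the lemma, and the proposal does not prove it. Your secular-equation set-up is legitimate, since $f_1=\phi(H_1-u,\lambda)/\phi(H_1,\lambda)$ is just Schwenk's identity divided through. But the claim that replacing an unbalanced $H_1$ by $T_{m,r}$ ($m=\frac n2$) raises $\lambda_2$, even though pole and residue move together, is only announced. The Step 2 formula it would rest on is left with ellipses.

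Your fallback does not work as described. For $r=2$, $m=2k$, the imbalance gap is $k-\sqrt{k^2-1}$ and the bridge cost is $x_u^2=\frac1m$. Both equal $\frac1{2k}+O(k^{-3})$, so the constants do not separate. Combined with $\lambda_2\ge\rho-x_u^2$, the comparison survives only by a margin of order $k^{-3}$. For $r\ge 3$ you would need an exact gap estimate over all unbalanced partitions and all $m$.

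The heuristic in (b) is also inconclusive. At $\theta$, which lies within $O(1/m)$ of the pole, the change in the main term and your ``bounded term'' are both of order $1/m$. It can be rescued by the completed formula $f(\lambda)=\frac{1}{(\lambda+a_j)^2g(\lambda)}+\frac{\lambda+a_j-1}{\lambda(\lambda+a_j)}$, where $g=1-\sum_k\frac{a_k}{\lambda+a_k}<0$ below $\rho$; both terms increase with $a_j$.

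Two set-up claims are wrong as well.

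First, $\lambda_2(G)\le\min\rho(H_i)$ fails in general. Take $H_1=K_{1,9}$ joined at its centre to $H_2=K_{5,5}$. Then $f_1=\frac{\lambda}{\lambda^2-9}$ and $f_2=\frac{\lambda^2-20}{\lambda(\lambda^2-25)}$, so $f_1f_2>1$ at $\lambda=3.1$ and $f_1f_2<1$ at $\lambda=4$. Hence $\lambda_2(G)>3=\rho(H_1)$. Interlacing only gives $\lambda_2(G)\le\max\{\min\rho(H_i),\rho(T_{m-1,r})\}$, which suffices once you restrict to $G$ with $\lambda_2(G)\ge\lambda_2(T_{m,r}uvT_{m,r})$.

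Second, below its pole $f_i$ is negative, $f_i\approx -x_u^2/(\rho-\lambda)$. So $f_1f_2=|f_1||f_2|$ increases on the relevant interval, and what you must show is $f_1'(\theta)>f_1(\theta)$, not the reverse.

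The idea that settles (a) and (b) together, and which the paper uses, is a pointwise comparison of characteristic polynomials. From $\phi_{K_{n_1,\dots,n_r}}(x)=x^{m-r}\big(x^r-\sum_{s=2}^r(s-1)k_sx^{r-s}\big)$ and Zykov's theorem, for all $x>0$ you get $\phi_{H_1}(x)\ge\phi_{T_{m,r}}(x)$ and $\phi_{H_1-u}(x)\ge\phi_{T_{m-1,r}}(x)$. One of these is strict unless $H_1\cong T_{m,r}$ with $u$ in a largest part.

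For $\theta\in(\rho(T_{m-1,r}),\rho(H_1))$ this says exactly $|f_{T,w}(\theta)|<|f_{H_1,u}(\theta)|$ for $w$ in a largest part of $T_{m,r}$. So the pole shift and the residue change are handled in one stroke.

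The paper phrases this through Schwenk's formula at $\theta=\lambda_2(H_1uvH_2)$:
$\phi_{H_1uvH_2}(\theta)-\phi_{TwvH_2}(\theta)=(\phi_{H_1}-\phi_{T})(\theta)\,\phi_{H_2}(\theta)-(\phi_{H_1-u}-\phi_{T_{m-1,r}})(\theta)\,\phi_{H_2-v}(\theta)<0$, using $\phi_{H_2}(\theta)<0<\phi_{H_2-v}(\theta)$. Hence $\phi_{TwvH_2}(\theta)>0$ while $\theta<\lambda_1(TwvH_2)$, which forces $\theta<\lambda_2(TwvH_2)$. Applying this to each side gives the lemma.
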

\begin{proof}
It is concluded in \cite[p.74]{CDS80} that the characteristic polynomial of a complete multipartite graph $K_{n_1,n_2,\ldots,n_r}$ is
$$\phi_{K_{n_1,n_2,\ldots,n_r}}(x)=x^{n_1+\cdots+n_r-r}\left(x^r-\sum_{s=2}^r(s-1)k_s x^{r-s}\right),$$
where $k_s$ denotes the number of copies of $K_s$.
A theorem by Zykov \cite{Z49} shows $k_s(G)<k_s(T_{|V(G)|,r})$ for $2\le s\le r$ if the clique number of $G$ is $r$. 
Thus, when $x>0$, we have
\begin{align}\label{eq:compare polynomial}
    \phi_{K_{n_1,n_2,\ldots,n_r}}(x)>\phi_{T_{n_1+n_2+\cdots+n_r,r}}(x)
\end{align}
unless $|n_i-n_j|\le 1$ for all $1\le i, j\le r$.

Let $G$ be a graph having the largest second eigenvalue in $\mathcal{E}(H_1,H_2)$, and $uv$ be the edge in $G$ such that $u\in V(H_1)$ and $v\in V(H_2)$.
Since $n$ is even, we get $V(H_1)=N_+$ and $V(H_2)=N_-$ by setting $F$ as $\{K_t:t\ge r+1\}$ in Theorem \ref{thm:F-free and even n}.
So by Lemma \ref{lem:connected by negative edges} \begin{align}\label{eq:compare1}
    \min\big\{\lambda_1(H_1),\lambda_1(H_2)\big\}>\lambda_2(H_1uvH_2).
\end{align}
Moreover, from Cauchy interlacing theorem, we have
\begin{align}\label{eq:compare2}
    \max\big\{\lambda_1(H_1-u),\lambda_1(H_2-v)\big\}\le\lambda_2(H_1uvH_2).
\end{align}

Suppose that $H_1$ is not the Tur\'an graph $T_{\frac{n}{2},r}$.
Let $w$ is a vertex with the minimum degree in $T_{\frac{n}{2},r}$.
Then we compare the characteristic polynomials, by a formula in \cite[Theorem \textcolor{blue}{3}]{S74},
$$\phi_{H_1uvH_2}(x)-\phi_{T_{\frac{n}{2},r}wvH_2}(x)=\Big(\phi_{H_1}(x)-\phi_{T_{\frac{n}{2},r}}(x)\Big)\phi_{H_2}(x)-\Big(\phi_{H_1-u}(x)-\phi_{T_{\frac{n}{2}-1,r}}(x)\Big)\phi_{H_2-v}(x).$$
By Eqs.\,(\ref{eq:compare1}) and (\ref{eq:compare2}), we get
$$\lambda_2(H)\le\lambda_1(H_2-v)\le\lambda_2(H_1uvH_2)<\lambda_1(H_2),$$
where the first inequality dues to Cauchy interlacing theorem.
Thus,
\begin{center}
    $\phi_{H_2}(\lambda_2(H_1uvH_2))<0$\ \ and \ \ $\phi_{H_2-v}(\lambda_2(H_1uvH_2))>0$.
\end{center}
This combining with Eq.\,(\ref{eq:compare polynomial}) follows
$$\phi_{T_{\frac{n}{2},r}wvH_2}\big(\lambda_2(H_1uvH_2)\big)>0.$$
Note that $\lambda_1(T_{\frac{n}{2},r}wvH_2)>\lambda_1(H_2)>\lambda_2(H_1uvH_2)$.
Then $\lambda_2(H_1uvH_2)<\lambda_2(T_{\frac{n}{2},r}wvH_2)$, contradicting the maximality of $\lambda_2(H_1uvH_2)$.
So $H_1$ is a copy of $T_{\frac{n}{2},r}$.
Similarly, we know $H_2$ is also a copy of $T_{\frac{n}{2},r}$.

Suppose $u$ or $v$ is not a vertex with the minimum degree in $T_{\frac{n}{2},r}$.
Without loss of generality, let $T_{\frac{n}{2},r}-u\not=T_{\frac{n}{2}-1,r}$ and $w$ be a vertex with the minimum degree.
Then 
$$\phi_{T_{\frac{n}{2},r}uvT_{\frac{n}{2},r}}(x)-\phi_{T_{\frac{n}{2},r}wvT_{\frac{n}{2},r}}(x)=-\Big(\phi_{T_{\frac{n}{2},r}-u}(x)-\phi_{T_{\frac{n}{2}-1,r}}(x)\Big)\phi_{T_{\frac{n}{2},r}-v}(x).$$
Since $\phi_{T_{\frac{n}{2},r}-u}(x)-\phi_{T_{\frac{n}{2}-1,r}}(x)>0$ and $\phi_{T_{\frac{n}{2},r}-v}\big(\lambda_2(T_{\frac{n}{2},r}uvT_{\frac{n}{2},r})\big)>0$, we have
$$\phi_{T_{\frac{n}{2},r}wvT_{\frac{n}{2},r}}\big(\lambda_2(T_{\frac{n}{2},r}uvT_{\frac{n}{2},r})\big)>0.$$
So $\lambda_2(T_{\frac{n}{2},r}uvT_{\frac{n}{2},r})<\lambda_2(T_{\frac{n}{2},r}wvT_{\frac{n}{2},r})$, a contradiction.
This shows $u$ (and $v$) is a vertex with the minimum degree.

The proof is complete.
\end{proof}

\begin{theorem}\label{thm:complete graph-2}
Let $r\ge 2$ be an integer and $G$ be a $K_{r+1}$-free connected graph of order $n$. 
We have (1) if $n\ge 2r+1$ is odd, then 
$$\lambda_2(G)\le \rho(T_{\frac{n-1}{2},r}).$$
Equality holds if and only if $G\in \mathcal{I}(T_{\frac{n-1}{2},r},T_{\frac{n-1}{2},r})$ and $G$ is $K_{r+1}$-free;
(2) if $n\ge 10r$ is even, then
$$\lambda_2(G)\le \lambda_2\left(T_{\frac{n}{2},r}uvT_{\frac{n}{2},r}\right),$$
where $u$ (or $v$) is a vertex of $T_{\frac{n}{2},r}$ in the partition set of $\lceil\frac{n}{2r}\rceil$ vertices.
Equality holds if and only if $G=T_{\frac{n}{2},r}uvT_{\frac{n}{2},r}$.
\end{theorem}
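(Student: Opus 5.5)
Part (1) will follow from Theorem \ref{thm:F-free and odd n} with $F=K_{r+1}$ once the threshold $n>2n_F$ is replaced by the explicit $n\ge 2r+1$. By Theorem \ref{thm:clique-1}, $\rho^*(m,K_{r+1})=\rho(T_{m,r})$ with unique extremal graph $T_{m,r}$ for every $m\ge r$, so $n_F=r$ works and $\frac{n-1}{2}\ge r$. The proof of Theorem \ref{thm:F-free and odd n} only uses that $\rho^*(m,K_{r+1})$ is strictly increasing in $m$. This holds because $T_{m,r}$ is a proper subgraph of $T_{m+1,r}$, which is connected, and we set $\rho^*(m,K_{r+1})=\rho(K_m)$ for $m<r$. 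I will rerun that argument verbatim with $G^*(m,F)=T_{m,r}$. The claim $e(V_+,V_-)\le 1$, the two cases on $e(V_+,V_-)$, and the Cauchy interlacing lower bound all go through. They yield $\lambda_2(G)\le\rho(T_{\frac{n-1}{2},r})$, with equality exactly on $\mathcal{I}(T_{\frac{n-1}{2},r},T_{\frac{n-1}{2},r})$ among the $K_{r+1}$-free members. Since $K_{r+1}$ has no cut edge, the graphs in $\mathcal{I}$ that we use are $K_{r+1}$-free, for instance when the new vertex is joined to one vertex on each side.

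For part (2), take $G^*$ maximizing $\lambda_2$ among $K_{r+1}$-free connected graphs of even order $n$. The proof of Theorem \ref{thm:F-free and even n} gives $G^*=H_1uvH_2$ with $|V(H_i)|=\frac n2$, each $H_i$ being $K_{r+1}$-saturated, and $V(H_1)=V_+$, $V(H_2)=V_-$. The key extra step is to show each $H_i$ is complete $r$-partite, after which Lemma \ref{lem:complete multipartite graph for even n} finishes the proof, including the uniqueness and the choice of $u,v$ in a part of size $\lceil\frac{n}{2r}\rceil$, i.e. of minimum degree. Suppose $H_1$ is not $r$-partite. By Lemma \ref{lem:connected by negative edges} and Theorem \ref{thm:clique-non-r-partite-1},
\[
\lambda_2(G^*)<\lambda_1(H_1)\le\rho(Y_{\frac n2,r}).
\]
For the lower bound I will use Lemma \ref{prop:asymptotic value for F}, which gives $\lambda_2(G^*)>\rho(T_{\frac n2,r})-\frac 2n$. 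It then suffices to show
\[
\rho(T_{\frac n2,r})-\rho(Y_{\frac n2,r})\ge \frac 2n
\]
when $n\ge 10r$. This is the main obstacle. I would estimate $\rho(Y_{m,r})$ with $m=\frac n2$ by a Rayleigh-quotient computation with its Perron vector against $T_{m,r}$. Alternatively, one can note that $Y_{m,r}$ loses about $|U_1|\approx m/r$ edges relative to $T_{m,r}$. The near-regularity of $T_{m,r}$, with Perron entries about $1/\sqrt m$, should then give a deficit of order $\frac{|U_1|}{m}\cdot\frac{1}{\ldots}\approx \frac{c}{r}$, comfortably exceeding $\frac{2}{n}=\frac1m$ once $m\ge 5r$. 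The constant $10r$ is presumably what makes this estimate close.

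Once $H_1$ is $r$-partite and $K_{r+1}$-saturated, it is complete $r$-partite, since adding any missing edge between different parts keeps it $K_{r+1}$-free. The same holds for $H_2$, and Lemma \ref{lem:complete multipartite graph for even n} identifies $G^*=T_{\frac n2,r}uvT_{\frac n2,r}$ with $u,v$ of minimum degree.
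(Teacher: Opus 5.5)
Your overall plan is the paper's. Part (1) is Theorem \ref{thm:F-free and odd n} combined with Theorem \ref{thm:clique-1}; your convention $\rho^*(m,K_{r+1})=\rho(K_m)$ for $m<r$ is extra care the paper omits, since $\lfloor n/3\rfloor$ can fall below $r$. Part (2) goes through Theorem \ref{thm:F-free and even n}, the $r$-partite versus non-$r$-partite dichotomy, Lemma \ref{lem:complete multipartite graph for even n}, Theorem \ref{thm:clique-non-r-partite-1} and Lemma \ref{prop:asymptotic value for F}, exactly as you describe.

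\textbf{The gap.} It is the step you flag yourself. The inequality $\rho(T_{m,r})-\rho(Y_{m,r})\ge\frac1m$ for $m=\frac n2\ge 5r$ is the whole technical content of the paper's proof: Claim \ref{clm:gap-1} shows the difference exceeds $\frac1{5r}$. You give only a heuristic, and it does not become a proof as written. Comparing with the Perron vector $\mathbf{y}$ of $Y_{m,r}$ gives
\[
\rho(T_{m,r})-\rho(Y_{m,r})\ge 2y_u\sum_{z\in U_1\setminus\{v\}}y_z+2y_vy_w-2y_uy_w .
\]
This has a negative term, because $uw$ is an edge of $Y_{m,r}$ but not of $T_{m,r}$. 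More importantly, it needs a lower bound on $y_u$, and the near-regularity of $T_{m,r}$ does not supply one:
\begin{itemize}
\item Inserting the Perron vector of $T_{m,r}$ into a Rayleigh quotient bounds the deficit from above, not from below.
\item In $Y_{m,r}$ the vertex $u$ has lost $|U_1|-1$ neighbours. For $r=2$ it has degree $2$, so $y_u=O(m^{-3/2})$. The displayed bound is then only of order $\frac1m$, not $\frac cr$, and you would have to beat the target $\frac1m$ on explicit constants for every $m\ge 5r$, not just asymptotically.
\end{itemize}

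\textbf{How the paper closes it.} It uses two moves.
\begin{itemize}
\item First, it shows $x_v>x_u$ for the Perron vector $\mathbf{x}$ of $Y_{m,r}$, by subtracting the eigen-equations at $w_1\in U_r\setminus\{u,w\}$ and at $w$, and at $v$ and at $u$. Moving the edge $uw$ to $vw$ then gives $Y=T_{m,r}-E(u,U_1\setminus\{v\})$. This is a proper subgraph of $T_{m,r}$ with $\rho(Y)>\rho(Y_{m,r})$, and $A(T_{m,r})-A(Y)$ is now entrywise nonnegative.
\item Second, for $r\ge 3$, it works with the Perron vector $\mathbf{y}$ of $Y$. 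It writes $y_u=(y_v+Z_1)/\rho(Y)$, where $Z_1$ is the sum of entries over $U_2\cup\cdots\cup U_{r-1}$, so $u$ keeps many neighbours. Since $y_u$ is the minimum entry, the ratio of $\sum_{z\in U_1}y_z$ to $\sum_{z\in U_1\setminus\{v\}}y_z$ is at most $\frac43$. Comparing $Y$ with $T_{m-\lfloor m/r\rfloor,r-1}\cup\lfloor\frac mr\rfloor K_1$ gives $\rho(T_{m,r})-\rho(Y)\ge\frac38\bigl(1-\rho(T_{m-\lfloor m/r\rfloor,r-1})/\rho(Y)\bigr)$. Average- and maximum-degree bounds then make this exceed $\frac1{5r}$.
\end{itemize}
The case $r=2$ is handled separately, by comparison with $T_{m-1,2}$ through Lemma \ref{lem:remove pendant vertex-1}. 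Note that $u$ is pendant in $Y$, not in $Y_{m,2}$, so the edge-moving step is needed there too. Your proposal is missing an argument of this kind, with explicit control of the Perron vector of the non-regular graph.
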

\begin{proof}
When $n$ is odd, from Theorems \ref{thm:F-free and odd n} and \ref{thm:clique-1}, we can directly get $\lambda_2(G)\le \rho(T_{\frac{n-1}{2},r})$, and the equality holds if and only if $G\in \mathcal{I}(T_{\frac{n-1}{2},r},T_{\frac{n-1}{2},r})$ and $G$ is $K_{r+1}$-free.

When $n$ is even, let $G$ be the graph with the maximum second largest eigenvalue among $K_{r+1}$-free connected graph of order $n$.
Then 
$$\lambda_2(G)\ge \lambda_2(T_{\frac{n}{2},r}uvT_{\frac{n}{2},r}).$$
From Theorem \ref{thm:F-free and even n}, we have $G\in\mathcal{E}(H_1,H_2)$ such that $H_1$ and $H_2$ are $K_{r+1}$-saturated and $n(H_1)=n(H_2)=\frac{n}{2}$.

If $G$ is $r$-partite, then $H_1$ and $H_2$ are complete $r$-partite graphs.
Then from Theorem \ref{lem:complete multipartite graph for even n} we obtain $G$ is isomorphic to $T_{\frac{n}{2},r}uvT_{\frac{n}{2},r}$.

If $G$ is non-$r$-partite, we shall deduce a contradiction.
It is known from the proof of Theorem \ref{thm:F-free and even n} that $$\lambda_2(G)<\min\{\rho(H_1),\rho(H_2)\}.$$
At least one graph of $H_1$ and $H_2$ is non-$r$-partite, so we may let $H_1$ is non-$r$-partite and $\lambda_2(G)<\rho(H_2)$.
From Theorem \ref{thm:clique-non-r-partite-1}, we further get $\lambda_2(G)<\rho(H_2)\le\rho(Y_{\frac{n}{2},r})$.

On the other hand, we get
$$\lambda_2(G)\ge\lambda_2(T_{\frac{n}{2},r}uvT_{\frac{n}{2},r})>\rho(T_{\frac{n}{2},r})-\frac{2}{n}$$
from Lemma \ref{prop:asymptotic value for F}.  
To obtain the contradiction, it suffices to prove
$$\rho(Y_{\frac{n}{2},r})\le\rho(T_{\frac{n}{2},r})-\frac{2}{n}.$$

\begin{claim}\label{clm:gap-1}
  If $n\ge 5r$, then $\rho(Y_{n,r})<\rho(T_{n,r})-\frac{1}{5r}$.
\end{claim}
\noindent\textbf{Proof of Claim \ref{clm:gap-1}.} 
If $r=2$, then it is clear that
\begin{center}
    $\rho(T_{n,2})=\sqrt{\lfloor\frac{n^2}{4}\rfloor}$\ \ and\ \ $\rho(T_{n-1,2})=\sqrt{\lfloor\frac{(n-1)^2}{4}\rfloor}$.
\end{center}
From Lemma \ref{lem:remove pendant vertex-1}, we get
$$\rho(Y_{n,2})\le\sqrt{\rho^2(T_{n-1,2})+1}=\sqrt{\big\lfloor\frac{(n-1)^2}{4}\big\rfloor+1}.$$
Thus, we can estimates the difference 
\begin{align*}
    \rho(T_{n,2})-\rho(Y_{n,2})&\ge\sqrt{\lfloor\frac{n^2}{4}\rfloor}-\sqrt{\big\lfloor\frac{(n-1)^2}{4}\big\rfloor+1}\\
    &=\frac{\lfloor\frac{n^2}{4}\rfloor-\big\lfloor\frac{(n-1)^2}{4}\big\rfloor-1}{\sqrt{\lfloor\frac{n^2}{4}\rfloor}+\sqrt{\big\lfloor\frac{(n-1)^2}{4}\big\rfloor+1}}\\
    &\ge\frac{\frac{n^2}{4}-\frac{(n-1)^2}{4}-2}{n}\\
    &=\frac{1}{2}-\frac{9}{4n}\\
    &>\frac{1}{10}.
\end{align*}

Now suppose $r\ge 3$.
Let $\textbf{\textit{x}}$ be the Perron vector of $A(Y_{n,r})$.
We follow the definition of $Y_{n,r}$, and
let $w_1$ be a vertex in $U_r\backslash\{u,w\}$. 
By the characteristic equation $A(Y_{n,r})\textbf{\textit{x}}=\rho(Y_{n,r})\textbf{\textit{x}}$, we have
\begin{center}
$\rho(Y_{n,r})\textbf{\textit{x}}_{w_1}=\textbf{\textit{x}}_v+\sum_{z\in U_1\backslash\{v\}}\textbf{\textit{x}}_z+Z_0$\ \ and\ \  $\rho(Y_{n,r})\textbf{\textit{x}}_w=\textbf{\textit{x}}_u+\sum_{z\in U_1\backslash\{v\}}\textbf{\textit{x}}_z+Z_0$,
\end{center}
where $Z_0=\sum_{z\in U_2\cup\cdots\cup U_{r-1}}\textbf{\textit{x}}_z$,
which follows
\begin{align}\label{eq:compare component1}
    \rho(Y_{n,r})(\textbf{\textit{x}}_{w_1}-\textbf{\textit{x}}_{w})=\textbf{\textit{x}}_{v}-\textbf{\textit{x}}_{u}.
\end{align}
We also have
\begin{center}
$\rho(Y_{n,r})\textbf{\textit{x}}_v=\textbf{\textit{x}}_u+\sum_{z\in U_r\backslash\{u,w\}}\textbf{\textit{x}}_z+Z_0$\ \ and\ \  $\rho(Y_{n,r})\textbf{\textit{x}}_u=\textbf{\textit{x}}_v+\textbf{\textit{x}}_w+Z_0$,
\end{center}
which, noting $n\ge 5r$, gives 
\begin{align}\label{eq:compare component2}
\big(\rho(Y_{n,r})+1\big)(\textbf{\textit{x}}_v-\textbf{\textit{x}}_u)=\sum_{z\in U_r\backslash\{u,w\}}\textbf{\textit{x}}_z-\textbf{\textit{x}}_w> \textbf{\textit{x}}_{w_1}-\textbf{\textit{x}}_w.
\end{align}
Then, combining Eqs.\,(\ref{eq:compare component1}) and (\ref{eq:compare component2}), we get
$\Big(\rho(Y_{n,r})+1-\frac{1}{\rho(Y_{n,r})}\Big)(\textbf{\textit{x}}_{v}-\textbf{\textit{x}}_{u})> 0$, and so
$\textbf{\textit{x}}_{v}>\textbf{\textit{x}}_{u}.$

Let $Y$ be the graph obtained from $Y_{n,r}$ by removing $uw$ and adding $vw$.
Then, from Theorem \ref{thm:C-F thm}, we have $$\rho(Y)-\rho(Y_{n,r})\ge \textbf{\textit{x}}^T\big(A(Y)-A(Y_{n,r})\big)\textbf{\textit{x}}=2(\textbf{\textit{x}}_v-\textbf{\textit{x}}_u)\textbf{\textit{x}}_w> 0.$$
It is noted that the graph $Y$ is a proper subgraph of $T_{n,r}$, that is, $Y=T_{n,r}-E(u,U_1\backslash\{v\})$.
Let $\textbf{\textit{y}}$ be the Perron vector of $Y$.
Since $A(Y)\textbf{\textit{y}}=\rho(Y)\textbf{\textit{y}}$, we have $\rho(Y)\textbf{\textit{y}}_u=\textbf{\textit{y}}_v+Z_1$,
where $Z_1=\sum_{z\in U_2\cup\cdots\cup U_{r-1}}\textbf{\textit{y}}_z$.
Then $\textbf{\textit{y}}_u=\frac{\textbf{\textit{y}}_v+Z_1}{\rho(Y)}$.
From Theorem \ref{thm:C-F thm}, we get
\begin{align}\label{eq:Tnr-Y(1)}
    \rho(T_{n,r})-\rho(Y)&\ge\textbf{\textit{y}}^T\big(A(T_{n,r})-A(Y)\big)\textbf{\textit{y}}\nonumber\\
    &=2\textbf{\textit{y}}_u\sum_{z\in U_1\backslash\{v\}}\textbf{\textit{y}}_z\nonumber\\
&=\frac{2\left(\textbf{\textit{y}}_v+Z_1\right)\cdot\sum_{z\in U_1\backslash\{v\}}{\textbf{\textit{y}}_z}}{\rho(Y)}.
\end{align}
Since $A(Y)\textbf{\textit{y}}=\rho(Y)\textbf{\textit{y}}$, we also have
\begin{center}    $\rho(Y)\textbf{\textit{y}}_v=\textbf{\textit{y}}_u+\sum_{z\in U_r\backslash\{u\}}{\textbf{\textit{y}}_z}+Z_1$ and $\rho(Y)\sum_{z\in U_1\backslash\{v\}}{\textbf{\textit{y}}_z}=\left(\lfloor\frac{n}{r}\rfloor-1\right)\left(\sum_{z\in U_r\backslash\{u\}}{\textbf{\textit{y}}_z}+Z_1\right)$.
\end{center}
Summing up these two equalities, we get
$\rho(Y)\sum_{z\in U_1}{\textbf{\textit{y}}_z}=\textbf{\textit{y}}_u+\lfloor\frac{n}{r}\rfloor\left(\sum_{z\in U_r\backslash\{u\}}{\textbf{\textit{y}}_z}+Z_1\right)$.
Then
\begin{align*}
    \frac{\sum_{z\in U_1}{\textbf{\textit{y}}_z}}{\sum_{z\in U_1\backslash\{v\}}{\textbf{\textit{y}}_z}}&=\frac{\textbf{\textit{y}}_u+\lfloor\frac{n}{r}\rfloor\left(\sum_{z\in U_r\backslash\{u\}}{\textbf{\textit{y}}_z}+Z_1\right)}{\left(\lfloor\frac{n}{r}\rfloor-1\right)\left(\sum_{z\in U_r\backslash\{u\}}{\textbf{\textit{y}}_z}+Z_1\right)}\\
    &=\frac{\textbf{\textit{y}}_u}{\left(\lfloor\frac{n}{r}\rfloor-1\right)\left(\sum_{z\in U_r\backslash\{u\}}{\textbf{\textit{y}}_z}+Z_1\right)}+\frac{1}{\lfloor\frac{n}{r}\rfloor-1}+1\\
    &\le \frac{\textbf{\textit{y}}_u}{\left(\lfloor\frac{n}{r}\rfloor-1\right)\left(n-\lfloor\frac{n}{r}\rfloor-1\right)\textbf{\textit{y}}_u}+\frac{1}{\lfloor\frac{n}{r}\rfloor-1}+1\\
    &=\frac{1}{\left(\lfloor\frac{n}{r}\rfloor-1\right)\left(1-\frac{1}{n-\lfloor\frac{n}{r}\rfloor}\right)}+1\\
    &\le \frac{4}{3},
\end{align*}
where the first inequality dues to the fact $\textbf{\textit{y}}_u=\min\{\textbf{\textit{y}}_z:z\in V(Y)\}$, and the last inequality dues to $n\ge 5r$ and $r\ge 3$.

By Eq.\,(\ref{eq:Tnr-Y(1)}), we have
\begin{align*}
    \rho(T_{n,r})-\rho(Y)&\ge\frac{2\left(\textbf{\textit{y}}_v+Z_1\right)\cdot\sum_{z\in U_1}{\textbf{\textit{y}}_z}}{\rho(Y)}\cdot \frac{3}{4}\\
    &=\frac{2\left(2\textbf{\textit{y}}_v+2Z_1\right)\cdot\sum_{z\in U_1}{\textbf{\textit{y}}_z}}{\rho(Y)}\cdot \frac{3}{8}\\
    &> \frac{2\left(\textbf{\textit{y}}_u+\sum_{z\in U_r\backslash\{u\}}{\textbf{\textit{y}}_z}+Z_1\right)\cdot\sum_{z\in U_1}{\textbf{\textit{y}}_z}}{\rho(Y)}\cdot \frac{3}{8}\\
    &=\frac{2\left(\sum_{z\in U_r}{\textbf{\textit{y}}_z}+Z_1\right)\cdot\sum_{z\in U_1}{\textbf{\textit{y}}_z}}{\rho(Y)}\cdot \frac{3}{8},
\end{align*}
where the last inequality dues to the fact $\textbf{\textit{y}}_v\ge \textbf{\textit{y}}_u$ and $Z_1\ge \sum_{z\in U_r\backslash\{u\}}{\textbf{\textit{y}}_z}$.
One may see that the resulting graph by removing all edges in $E(U_1,V(Y)\backslash U_1)$ from the graph $Y$ is the union of the Tur\'an graph $T_{n-\lfloor\frac{n}{r}\rfloor,r-1}$ and $\lfloor\frac{n}{r}\rfloor$ copies of $K_1$.
From Theorem \ref{thm:C-F thm}, we obtain
$$\rho(T_{n-\lfloor\frac{n}{r}\rfloor,r-1})-\rho(Y)\ge\textbf{\textit{y}}^T\left(A\big(T_{n-\lfloor\frac{n}{r}\rfloor,r-1}\cup \lfloor\frac{n}{r}\rfloor K_1\big)-A(Y)\right)\textbf{\textit{y}}=-2\left(\sum_{z\in U_r}{\textbf{\textit{y}}_z}+Z_1\right)\cdot\sum_{z\in U_1}{\textbf{\textit{y}}_z}.$$
Thus,
\begin{align}\label{eq:Tnr-Y(2)}
    \rho(T_{n,r})-\rho(Y)\ge \frac{\rho(Y)-\rho(T_{n-\lfloor\frac{n}{r}\rfloor,r-1})}{\rho(Y)}\cdot\frac{3}{8}=\left(1-\frac{\rho(T_{n-\lfloor\frac{n}{r}\rfloor,r-1})}{\rho(Y)}\right)\cdot\frac{3}{8}.
\end{align}

Since the spectral radius of any graph is bounded by its average degree and maximum degree,
we have
$$\rho(Y)\ge\frac{n(n-\lceil\frac{n}{r}\rceil)-2(\lfloor\frac{n}{r}\rfloor-1)}{n}>n-\big\lceil\frac{n}{r}\big\rceil-\big\lfloor\frac{n}{r}\big\rfloor\frac{2}{n}$$
and
$$\rho(T_{n-\lfloor\frac{n}{r}\rfloor,r-1})\le n-2\big\lfloor\frac{n}{r}\big\rfloor.$$
Combining with Eq.\,(\ref{eq:Tnr-Y(2)}) and noting $n\ge 5r$, we get
\begin{align*}
    \rho(T_{n,r})-\rho(Y)&>\frac{2\big\lfloor\frac{n}{r}\big\rfloor-\big\lceil\frac{n}{r}\big\rceil-\big\lfloor\frac{n}{r}\big\rfloor\frac{2}{n}}{n-\big\lceil\frac{n}{r}\big\rceil-\big\lfloor\frac{n}{r}\big\rfloor\frac{2}{n}}\cdot\frac{5}{12}\\
    &>\frac{\frac{n}{r}-2-\big\lfloor\frac{n}{r}\big\rfloor\frac{2}{n}}{n-\big\lceil\frac{n}{r}\big\rceil}\cdot\frac{5}{12}\\
    &\ge \frac{1}{5r}.
\end{align*}
Thus, we obtain
$$\rho(T_{n,r})-\rho(Y_{n,r})>\rho(T_{n,r})-\rho(Y)>\frac{1}{5r}.$$
This claim holds.
$\hfill\blacksquare$ 

From Claim \ref{clm:gap-1}, since $\frac{n}{2}\ge 5r$, we have
$$\rho(T_{\frac{n}{2},r})-\rho(Y_{\frac{n}{2},r})>\frac{1}{5r}\ge\frac{2}{n},$$
completing the proof.
\end{proof}

Theorem \ref{thm:complete graph-2} indicates a directly extension of results of Hong \cite{H88} and Zhai et al. \cite{ZLW12} from connected graphs and connected bipartite graphs to connected $r$-partite graphs.

\begin{corollary}\label{thm:upper bound for connected r-part}
Let $G$ be a connected $r$-partite graph on $n\ge 2r$ vertices, then 
(1) if $n\ge 2r+1$ is odd, then 
$$\lambda_2(G)\le \rho(T_{\frac{n-1}{2},r}).$$
Equality holds if and only if $G\in \mathcal{I}(T_{\frac{n-1}{2},r},T_{\frac{n-1}{2},r})$ and $G$ is $K_{r+1}$-free;
(2) if $n\ge 10r$ is even, then
$$\lambda_2(G)\le \lambda_2\left(T_{\frac{n}{2},r}uvT_{\frac{n}{2},r}\right),$$
where $u$ (or $v$) is a vertex of $T_{\frac{n}{2},r}$ in the partition set of $\lceil\frac{n}{2r}\rceil$ vertices.
Equality holds if and only if $G=T_{\frac{n}{2},r}uvT_{\frac{n}{2},r}$.
\end{corollary}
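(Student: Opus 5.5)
The corollary is a direct specialization of Theorem \ref{thm:complete graph-2}: I will show that connected $r$-partite graphs are a subfamily of connected $K_{r+1}$-free graphs, and that every extremal graph of the theorem is itself $r$-partite. The first point is immediate. An $r$-partite graph has chromatic number at most $r$, while $K_{r+1}$ needs $r+1$ colours. So every connected $r$-partite graph $G$ of order $n$ is a connected $K_{r+1}$-free graph of order $n$. The upper bounds in (1) and (2) then follow at once from parts (1) and (2) of Theorem \ref{thm:complete graph-2}, under the same ranges $n\ge 2r+1$ odd and $n\ge 10r$ even.

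For the equality cases, the \emph{only if} direction is inherited from the theorem. If an $r$-partite $G$ attains the bound, then as a $K_{r+1}$-free graph it lies in $\mathcal{I}(T_{\frac{n-1}{2},r},T_{\frac{n-1}{2},r})$ in the odd case, and it equals $T_{\frac{n}{2},r}uvT_{\frac{n}{2},r}$ in the even case.

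For the \emph{if} direction, and to confirm that the stated bound is genuinely attained within the $r$-partite class, I check that the extremal graphs are $r$-partite. In the even case, colour each copy of $T_{\frac{n}{2},r}$ with its $r$ parts. Since the two copies are joined by the single edge $uv$, I can permute the colours of the second copy so that $v$ receives a colour different from that of $u$. This gives a proper $r$-colouring.

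In the odd case I need at least one member of $\mathcal{I}(T_{\frac{n-1}{2},r},T_{\frac{n-1}{2},r})$ that is $r$-partite. I take the new vertex $w$ adjacent to one vertex in each copy; the resulting graph has a tree-like block structure, so it is $r$-colourable for $r\ge 2$. By the interlacing computation in Theorem \ref{thm:F-free and odd n}, every member of this family has $\lambda_2=\rho(T_{\frac{n-1}{2},r})$. Hence the bound is sharp, and an $r$-partite member of the family attains it.

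The only delicate point is the wording of the equality condition in (1). The theorem's condition, ``$G\in\mathcal{I}$ and $G$ is $K_{r+1}$-free'', should be read here together with the standing hypothesis that $G$ is $r$-partite. Some members of $\mathcal{I}$ are $K_{r+1}$-free but not $r$-partite, for example when $w$ sees vertices of all $r$ parts in one copy without forming a $K_{r+1}$; this cannot happen, since the parts are complete to each other. So I will argue that within $\mathcal{I}$, $K_{r+1}$-freeness already forces $N(w)$ to miss some part in each copy. I then recolour the second copy so that a colour missing from $N(w)$ in both copies is the same colour, and give that colour to $w$. This yields an $r$-colouring, so the theorem's characterization transfers verbatim. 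The restriction $n\ge 2r$ in the corollary's preamble simply ensures that the cases (1) and (2) are meaningful.
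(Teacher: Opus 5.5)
Your proposal is correct and follows the paper's route: the paper also obtains this corollary directly from Theorem \ref{thm:complete graph-2}, using the fact that $r$-partite graphs are $K_{r+1}$-free. Your extra check that the extremal graphs are $r$-partite is sound, including the observation that within $\mathcal{I}(T_{\frac{n-1}{2},r},T_{\frac{n-1}{2},r})$, $K_{r+1}$-freeness forces $N(w)$ to miss a part of each copy; however, the sentence asserting that some members are ``$K_{r+1}$-free but not $r$-partite'' should be rephrased as the worry you go on to refute.
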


We can also extend Powers' upper bound of $\lambda_2(G)$ for bipartite graphs to $r$-partite graphs. 

\begin{corollary}\label{thm:upper bound for r-part}
Let $G$ be a $r$-partite graph on $n\ge 2r$ vertices, then 
$$\lambda_2(G)\le \rho(T_{\left\lfloor\frac{n}{2}\right\rfloor,r}).$$
The equality holds if and only if $G\in\mathcal{I}(T_{\frac{n-1}{2},r},T_{\frac{n-1}{2},r})\bigcup\left\{H\cup T_{\frac{n-1}{2},r}:\rho(H)\ge \rho(T_{\frac{n-1}{2},r})\right\}$ when $n$ is odd, or $G\cong 2T_{\frac{n}{2},r}$ when $n$ is even.
\end{corollary}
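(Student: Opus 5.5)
The plan is to deduce Corollary \ref{thm:upper bound for r-part} from Theorem \ref{thm:general upper bound for F}, applied with $F=K_{r+1}$, together with Theorem \ref{thm:clique-1}. An $r$-partite graph has chromatic number at most $r$, so it is $K_{r+1}$-free. Conversely, every graph in the extremal families of Theorem \ref{thm:general upper bound for F} for $F=K_{r+1}$ is built from Tur\'an graphs. By Theorem \ref{thm:clique-1}, we have $\rho^*(m,K_{r+1})=\rho(T_{m,r})$ with unique extremal graph $G^*(m,K_{r+1})=T_{m,r}$ for every $m\ge r$, so in effect $n_F=r$. This is why the hypothesis $n\ge 2r$ matches $n\ge 2n_F$ in Theorem \ref{thm:general upper bound for F}.

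The steps are as follows. First, given an $r$-partite $G$ on $n\ge 2r$ vertices, Theorem \ref{thm:general upper bound for F} gives directly
\[
\lambda_2(G)\le \rho^*\left(\left\lfloor\tfrac n2\right\rfloor,K_{r+1}\right)=\rho\left(T_{\lfloor n/2\rfloor,r}\right).
\]
Second, for the equality case we restrict the characterization of Theorem \ref{thm:general upper bound for F} to $r$-partite graphs.
\begin{itemize}
\item For even $n$, equality forces $G\cong 2T_{\frac n2,r}$, which is $r$-partite.
\item For odd $n$, equality forces $G\in\mathcal{I}(T_{\frac{n-1}2,r},T_{\frac{n-1}2,r})$ with $G$ being $K_{r+1}$-free, or $G=H\cup T_{\frac{n-1}2,r}$ with $\rho(H)\ge\rho(T_{\frac{n-1}2,r})$. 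In the latter family $H$ has $\frac{n+1}2$ vertices and is taken $r$-partite.
\end{itemize}
Third, for sufficiency we use the last paragraph of the proof of Theorem \ref{thm:general upper bound for F}, which shows that every listed graph attains the bound.

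The main obstacle is the odd case of the characterization. The family $\mathcal{I}(T_{\frac{n-1}2,r},T_{\frac{n-1}2,r})$ as a $K_{r+1}$-free class need not coincide with its $r$-partite members. A new vertex $w$ joined to vertices in every part of one copy creates a $K_{r+1}$ and so is excluded. A vertex whose neighbourhood in each copy misses at least one part can be added to a colour class, so $K_{r+1}$-freeness of $G$ is equivalent to $G$ being $r$-partite here. I will verify this explicitly: since each copy is complete $r$-partite, $K_{r+1}\subseteq G$ iff $N(w)$ meets all $r$ parts of some copy. Thus the phrase ``$G$ is $K_{r+1}$-free'' in the statement is exactly the $r$-partite condition, and no further work is needed. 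For the disjoint-union family I similarly need $H$ to be $r$-partite, which is implicit since $G$ is.
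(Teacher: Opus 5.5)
Your proposal is correct and follows the paper's route: the paper proves this corollary in one line by applying Theorem~\ref{thm:general upper bound for F} with $F=K_{r+1}$, with Theorem~\ref{thm:clique-1} identifying $\rho^*(m,K_{r+1})=\rho(T_{m,r})$ and $G^*(m,K_{r+1})=T_{m,r}$, exactly as you do. Your extra check that $K_{r+1}$-freeness and $r$-partiteness coincide on $\mathcal{I}(T_{\frac{n-1}{2},r},T_{\frac{n-1}{2},r})$ is correct but not needed. Since $G$ is $r$-partite by hypothesis, hence $K_{r+1}$-free, both directions of the equality characterization carry over directly from the $K_{r+1}$-free statement.
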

\begin{proof}
The result follows from Theorem \ref{thm:general upper bound for F}.
\end{proof}


\vskip0.2in
\noindent\textit{2. Book graphs}

As a spectral counterpart of Erd\H{o}s' conjecture \cite{E62} on booksize of graphs, independently resolved by Edwards \cite{E} and Khad\v{z}iivanov and Nikiforov \cite{KN79}, Zhai and Lin \cite{ZL23} determined the spectral condition for existence of the book graph $B_{r+1}$.

\begin{theorem}\cite{ZL23}\label{thm:ZL-book-1}
If $G$ is a $B_{r+1}$-free graph of order $n\ge \frac{13}{2}r$, then
$$\rho(G) \le \rho(T_{n,2}),$$
with equality if and only if $G= T_{n,2}$.
\end{theorem}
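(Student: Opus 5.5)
This is a cited theorem of Zhai and Lin, and I would reprove it by the standard spectral Turán scheme for color-critical graphs. The approach has two stages: first show that an extremal graph contains no triangle, then use Nosal's bound for triangle-free graphs.

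Let $G$ be a $B_{r+1}$-free graph of order $n\ge \frac{13}{2}r$ with maximum spectral radius $\rho=\rho(G)\ge\rho(T_{n,2})=\sqrt{\lfloor n^2/4\rfloor}$. If $G$ is triangle-free, Nosal's theorem gives $\rho\le\sqrt{e(G)}\le\sqrt{\lfloor n^2/4\rfloor}$. Equality there forces $G$ to be complete bipartite with $\lfloor n^2/4\rfloor$ edges, hence $G=T_{n,2}$. So the real work is ruling out triangles.

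\textbf{Local counting.} Let $\mathbf{x}$ be the Perron vector with maximum entry $x_{u^*}=1$. The $B_{r+1}$-free condition says every edge lies in at most $r$ triangles. Equivalently, for each vertex $u$, the neighbourhood graph $G[N(u)]$ has maximum degree at most $r$. I would then use the second-power identity
$$\rho^2 x_{u}=d(u)x_u+\sum_{v\in N(u)}\sum_{w\in N(v)\setminus\{u\}}x_w.$$
Split the double sum by whether $w\in N(u)$ or not. The terms with $w\in N(u)$ are bounded by $2e(N(u))\le r\,d(u)$. The terms with $w\notin N(u)$ are bounded by $e(N(u),V\setminus N(u))$. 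Applying this at $u=u^*$ gives
$$\rho^2\le d(u^*)+r\,d(u^*)/\ldots+e(N(u^*),\overline{N(u^*)}).$$
Combined with $\rho^2\ge\lfloor n^2/4\rfloor$, this pushes $e(G)$ to be close to $n^2/4$.

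\textbf{Ruling out triangles.} Here I would invoke the Edwards / Khad\v{z}iivanov–Nikiforov booksize theorem: a graph with more than $n^2/4$ edges has an edge in at least $n/6$ triangles, so $B_{r+1}\subseteq G$ when $n\ge 6r+O(1)$. The constant $\frac{13}{2}$ is where the slack in this step is absorbed. This gives $e(G)\le\lfloor n^2/4\rfloor$. I would then sharpen the inequality from the local counting step into a spectral triangle-counting bound of the form
$$\rho^2\le e(G)-c\cdot t(G)/n+\ldots,$$
in the spirit of Nikiforov's inequality $\rho^3\le\ldots$. This forces $t(G)=0$ unless $\rho<\sqrt{\lfloor n^2/4\rfloor}$.

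The main obstacle is converting "few triangles per edge" into a strict spectral deficit with constants strong enough to work already at $n\ge\frac{13}{2}r$ rather than only asymptotically. For this I would first try the bound $\rho^2\le\max_u\sum_{v\sim u}d(v)$ together with the fact that $d(u)+d(v)\le n+r$ for every edge $uv$, since the common neighbourhood of an edge has size at most $r$. That gives
$$\rho^2\le\max_u\sum_{v\sim u}d(v)\le\ldots,$$
and the estimate is then tightened by tracking equality cases.
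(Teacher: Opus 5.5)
The paper does not prove this statement; it quotes it from Zhai and Lin. Your proposal therefore has to stand on its own, and it has a genuine gap exactly where the difficulty lies.

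\textbf{What works.} The triangle-free case is fine: Nosal plus Mantel gives the bound, with equality only for $T_{n,2}$. The Edwards/Khad\v{z}iivanov--Nikiforov theorem also correctly gives $e(G)\le\lfloor n^2/4\rfloor$ once $n\ge 6r$.

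\textbf{What fails.} An edge bound says nothing about $\rho$ once triangles are present, and none of your three devices bridges this.
First, the proposed inequality $\rho^2\le e(G)-c\,t(G)/n+\cdots$ has the wrong sign. The Bollob\'as--Nikiforov inequality reads $\rho^3\le e(G)\rho+3t(G)$, so triangles let $\rho^2$ exceed $e(G)$; already $\rho(K_4)^2=9>6=e(K_4)$. Nothing of your shape can force $t(G)=0$.
Second, the fallback $\rho^2\le\max_u\sum_{v\sim u}d(v)$ together with $d(u)+d(v)\le n+r$ gives only $\rho^2\le\max_d d(n+r-d)\le (n+r)^2/4$, i.e.\ $\rho\le (n+r)/2$. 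This overshoots $\sqrt{\lfloor n^2/4\rfloor}$ by about $r/2$. That is a first-order loss, not something repaired by tracking equality cases.
Third, do your local count at $u^*$ exactly, with $A=N(u^*)$, $B=V\setminus(A\cup\{u^*\})$ and $d=|A|$. It gives $\rho^2\le d+2e(A)+e(A,B)=e(G)+e(A)-e(B)$. Here $e(A)$ can be as large as $r|A|/2$ while $e(B)$ may be $0$. So you are left with an excess of order $rn$, which for $n$ near $\frac{13}{2}r$ is of order $n^2$. The count does not even show that $e(G)$ is close to $n^2/4$ in the relevant range.

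\textbf{The missing idea.} You need a mechanism by which edges inside $A$ pay for themselves in $e(A,B)$.
For an edge $vv'$ of $G[A]$, $u^*$ is already a common neighbour, so $|N(v)\cap N(v')\cap B|\le r-1$. Hence $d_B(v)+d_B(v')\le |B|+r-1$: each such edge costs about $|B|$ cross edges while contributing only $O(r)$ to $2e(A)$.
Take a maximum matching $M$ of $G[A]$. Its vertex set covers $G[A]$, so $e(A)\le 2r|M|$ because $\Delta(G[A])\le r$. This yields
\[\rho^2\le d(n-d)+|M|\,(5r-1-|B|),\]
hence $\rho^2\le\lfloor n^2/4\rfloor$ whenever $u^*$ lies in no triangle or $|B|\ge 5r-1$.
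The remaining configurations (large $d(u^*)$, $n$ close to $\frac{13}{2}r$) and strictness for non-bipartite $G$ need a finer quantitative analysis, e.g.\ using $x_w<1$ on $B$. The standard stability-based spectral Tur\'an scheme you invoke cannot supply this, since it only works for $n\ge n_0(r)$ astronomically large. That linear-range local argument is the real content of the theorem, and your proposal does not provide it.
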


Noting the extremal graph $T_{n,2}$ with spectral radius $\rho^*(n,B_{r+1})$ from Theorem \ref{thm:ZL-book-1} is bipartite.
Recently, Liu and Miao \cite{LM25} gave a refinement to Theorem \ref{thm:ZL-book-1} by providing the extremal graph maximizing the spectral radius among non-bipartite $B_{r+1}$-free graphs.

Following the definition in \cite{LM25}, let
$K_{\lfloor\frac{n-1}{2}\rfloor,\lceil\frac{n-1}{2}\rceil}^{r,r}$ be a graph obtained from $T_{n-1,2}$ by adding a new vertex $v$ such that $v$ has $r$ neighbors in each part of $T_{n-1,2}$.

\begin{theorem}\cite{LM25}\label{thm:LM-book-1}
Let $G$ is a non-bipartite $B_{r+1}$-free graph of
order $n\ge8(r^2 + r + 2)$, then
$$\rho(G)\le \rho\left(K_{\lfloor\frac{n-1}{2}\rfloor,\lceil\frac{n-1}{2}\rceil}^{r,r}\right),$$
with equality if and only if $G=K_{\lfloor\frac{n-1}{2}\rfloor,\lceil\frac{n-1}{2}\rceil}^{r,r}$.
\end{theorem}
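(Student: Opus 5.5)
The plan is the standard spectral stability method followed by a local structural analysis. Let $G$ maximize $\rho$ among non-bipartite $B_{r+1}$-free graphs of order $n\ge 8(r^2+r+2)$, and let $\textbf{\textit{x}}$ be its Perron vector, which is positive after restricting to a component. Two easy lower bounds come first. Since $K^{r,r}_{\lfloor\frac{n-1}{2}\rfloor,\lceil\frac{n-1}{2}\rceil}$ contains $T_{n-1,2}$ together with a vertex of degree $2r$, we have $\rho(G)\ge\rho(K^{r,r})>\rho(T_{n-1,2})\ge\frac{n-2}{2}$. Next, every edge of $G$ lies in at most $r$ triangles, so the triangle count satisfies $t(G)\le rm/3$. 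Feeding this into Nikiforov's inequality relating $\rho$, $m$ and $t(G)$ (or the identity $\sum_{v}\lambda_i^3=6t$ combined with $\rho^2\le 2m-\dots$) gives $m\ge \frac{n^2}{4}-O(rn)$. The Füredi-type triangle removal and stability for nearly-extremal triangle-sparse graphs then yield a partition $V=S\cup T$, maximizing $e(S,T)$, such that $e(S)+e(T)=O(rn)$ and $|S|,|T|=\frac n2\pm O(\sqrt{rn})$.

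The second step upgrades this to a vertex-level statement via eigen-equations. First, every vertex has $\textbf{\textit{x}}_v$ close to $\max_w \textbf{\textit{x}}_w$. If $\textbf{\textit{x}}_v$ were small, deleting $v$'s edges and joining $v$ to all of the opposite part would create no book and would increase $\rho$, contradicting maximality. The book condition then controls edges inside a part. If $uv\in E(S)$, then $|N(u)\cap N(v)|\le r$, so $u$ and $v$ together miss at least $|T|-r$ vertices of $T$. Call a vertex bad if it has at least $r+1$ neighbors in its own part, or at most $|T|-\epsilon n$ neighbors in the opposite part. I would show by a double counting argument, comparing $\rho\,\textbf{\textit{x}}_v=\sum_{w\sim v}\textbf{\textit{x}}_w$ with the lower bound on $\rho$, that there is at most one bad vertex. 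I would also show that every edge inside $S$ or $T$ is incident to a bad vertex: if both ends are good, they share more than $r$ neighbors in $T$ once $n\ge 8(r^2+r+2)$.

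Since $G$ is non-bipartite, there is exactly one bad vertex $v_0$, and $G-v_0$ is bipartite with parts $S',T'$. By maximality, $G-v_0$ is complete bipartite, because adding an $S'$–$T'$ edge keeps $G$ non-bipartite and creates no book not passing through $v_0$; this requires a short check. Let $a=|N(v_0)\cap S'|$ and $b=|N(v_0)\cap T'|$. For $x\in N(v_0)\cap S'$, the edge $v_0x$ has common neighbors $N(v_0)\cap T'$, so $b\le r$; similarly $a\le r$. The neighborhoods are otherwise forced, and non-bipartiteness needs $a,b\ge1$.

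The last step optimizes over the parameters. Monotonicity of $\rho$ under adding edges gives $a=b=r$. A Perron-vector comparison, in which a vertex is moved between $S'$ and $T'$ while the Rayleigh quotient increase is tracked, shows that the balanced split $\{\lfloor\frac{n-1}{2}\rfloor,\lceil\frac{n-1}{2}\rceil\}$ is optimal, which gives $G=K^{r,r}$. I expect the main obstacle to be the second step: excluding configurations with several moderately bad vertices, such as a few vertices each carrying a few internal edges with carefully thinned neighborhoods, using only the crude bound $n\ge 8(r^2+r+2)$. There I would first try to bound $\sum_{\text{bad }v}\textbf{\textit{x}}_v$ through the quadratic form $\rho\sum \textbf{\textit{x}}_v^2$ split over $S$, $T$ and the bad set.
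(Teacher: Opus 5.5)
The paper does not prove this statement; it quotes it from Liu and Miao and uses it as a black box. Your outline therefore has to stand on its own, and it breaks at the first claim of your second step.

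\textbf{The claim that all Perron entries are near the maximum is false.} It is not true that every vertex has $\textbf{\textit{x}}_v$ close to $\max_w \textbf{\textit{x}}_w$. The graph your own argument ends with refutes it. In $K^{r,r}_{\lfloor\frac{n-1}{2}\rfloor,\lceil\frac{n-1}{2}\rceil}$ the added vertex $v_0$ has degree $2r$, so $\rho\,\textbf{\textit{x}}_{v_0}\le 2r\max_w\textbf{\textit{x}}_w$, and hence $\textbf{\textit{x}}_{v_0}<\frac{4r}{n-2}\max_w\textbf{\textit{x}}_w$.

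Your switching argument fails for two reasons. First, deleting the edges at $v$ and joining $v$ to the opposite part can destroy every odd cycle. Applied to $v_0$ it yields a complete bipartite graph (exactly $T_{n,2}$ if $v_0$ is joined to the larger side). That graph lies outside the admissible family, so maximality is not contradicted. Second, the switch can create books. The new edge $vw$ has common neighbourhood $N(w)\cap T$, and every edge of $G[T]$ at $w$ acquires $v$ as a new common neighbour.

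This is the core difficulty of the non-bipartite problem, not a technicality. The vertex carrying the odd cycles has a tiny Perron entry, so your planned double count for ``at most one bad vertex'', which rests on near-uniform Perron entries, has no valid basis. You need a separate treatment of low-degree (low-entry) vertices. For instance, show that deleting two of them leaves a $B_{r+1}$-free graph on $n-2$ vertices whose spectral radius, plus the small contribution of the deleted vertices, stays below $\rho(T_{n-1,2})$. Throughout, you may only use switchings that keep an odd cycle.

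\textbf{The completeness step also fails.} The claim ``by maximality $G-v_0$ is complete bipartite'' does not go through. Suppose $x\in N(v_0)\cap S'$ and $y\in N(v_0)\cap T'$ are non-adjacent. Adding $xy$ places the new triangle $v_0xy$ on the edges $v_0x$ and $v_0y$, which may already lie in $r$ triangles. The books you must worry about are exactly those through $v_0$.

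Before completeness is known, the only constraint is that the bipartite graph between $A=N(v_0)\cap S'$ and $B=N(v_0)\cap T'$ has maximum degree at most $r$. For example, take $|A|=|B|=r+k$ with $1\le k\le \epsilon n$, an $r$-regular graph between $A$ and $B$, and all other $S'$--$T'$ edges present. This is a non-bipartite $B_{r+1}$-free graph with a single bad vertex, so $a,b\le r$ does not follow.

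Ruling this out needs an exchange such as $G+xy-v_0y$ (or $G+xy-v_0x$). This keeps the graph $B_{r+1}$-free and, with a little care, non-bipartite. With $\textbf{\textit{x}}$ the unit Perron vector of $G$, it satisfies $\rho(G+xy-v_0y)-\rho(G)\ge 2\textbf{\textit{x}}_y(\textbf{\textit{x}}_x-\textbf{\textit{x}}_{v_0})>0$. This uses precisely the smallness of $\textbf{\textit{x}}_{v_0}$ that your earlier claim denies.

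\textbf{Constants.} The threshold is only $n\ge 8(r^2+r+2)$, so every $O(\cdot)$ term and every $\epsilon n$ in your sketch would need explicit constants. As written, the outline could at best give the result for $n$ sufficiently large in terms of $r$.
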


\begin{theorem}\label{thm:book-2}
Let $r$ be an integer and $G$ be a $B_{r+1}$-free connected graph of order $n$. We have
(1) if $n\ge 13r$ is odd, then
$$\lambda_2(G)\le \rho\left(T_{\frac{n-1}{2},2}\right).$$
Equality holds if and only if $G\in \mathcal{I}(T_{\frac{n-1}{2},2},T_{\frac{n-1}{2},2})$ and $G$ is $B_{r+1}$-free; (2) if $n\ge 16(r^2 + r + 2)$ is even, then
$$\lambda_2(G)\le \lambda_2\left(T_{\frac{n}{2},2}uvT_{\frac{n}{2},2}\right),$$
where $u$ (or $v$) is a vertex of $T_{\frac{n}{2},2}$ in the partition set of $\lceil\frac{n}{4}\rceil$ vertices.
Equality holds if and only if $G=T_{\frac{n}{2},2}uvT_{\frac{n}{2},2}$.
\end{theorem}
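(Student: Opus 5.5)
The plan mirrors the proof of Theorem \ref{thm:complete graph-2}, with Theorems \ref{thm:ZL-book-1} and \ref{thm:LM-book-1} taking the places of Theorems \ref{thm:clique-1} and \ref{thm:clique-non-r-partite-1}.

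\emph{Odd $n$.} Here $\frac{n-1}{2}\ge \frac{13}{2}r$, so Theorem \ref{thm:ZL-book-1} gives $\rho^*(\frac{n-1}{2},B_{r+1})=\rho(T_{\frac{n-1}{2},2})$ with $T_{\frac{n-1}{2},2}$ the unique extremal graph. The book $B_{r+1}$ has no cut edge: every edge lies in a triangle. Hence Theorem \ref{thm:F-free and odd n} applies and yields $\lambda_2(G)\le \rho(T_{\frac{n-1}{2},2})$, with equality exactly for the members of $\mathcal{I}(T_{\frac{n-1}{2},2},T_{\frac{n-1}{2},2})$ that are $B_{r+1}$-free. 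Theorem \ref{thm:F-free and odd n} needs $n>2n_F$, and we take $n_F=\frac{13}{2}r$, which the hypothesis $n\ge 13r$ provides.

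\emph{Even $n$, reduction.} Let $G$ be an extremal graph. Since $B_{r+1}$ has no cut edges, Theorem \ref{thm:F-free and even n} applies, with $n_F$ now large enough that both Theorem \ref{thm:ZL-book-1} and Theorem \ref{thm:LM-book-1} hold at order $\frac n2\ge 8(r^2+r+2)$. It gives $G\in\mathcal{E}(H_1,H_2)$ with $H_1,H_2$ being $B_{r+1}$-saturated on $\frac n2$ vertices, and $\lambda_2(G)<\min\{\rho(H_1),\rho(H_2)\}$.

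\emph{Case 1: both $H_i$ bipartite.} Saturation forces each $H_i$ to be complete bipartite, because adding an edge across the two parts creates no triangle and hence no book. Lemma \ref{lem:complete multipartite graph for even n} with $r=2$ then gives $G\cong T_{\frac n2,2}uvT_{\frac n2,2}$, with $u,v$ of minimum degree, i.e.\ in the part of size $\lceil\frac n4\rceil$. That lemma's proof invokes Theorem \ref{thm:F-free and even n} only to get the sign pattern $V(H_1)=V_+$, $V(H_2)=V_-$, which holds here as well.

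\emph{Case 2: some $H_i$, say $H_1$, non-bipartite.} Then $\lambda_2(G)<\rho(H_1)\le \rho(K^{r,r}_{\lfloor\frac{n-2}{4}\rfloor,\lceil\frac{n-2}{4}\rceil})$ by Theorem \ref{thm:LM-book-1} at order $\frac n2$. Combined with Lemma \ref{prop:asymptotic value for F}, which gives $\lambda_2(G)\ge\lambda_2(T_{\frac n2,2}uvT_{\frac n2,2})>\rho(T_{\frac n2,2})-\frac2n$, it suffices to prove the gap
\[
\rho\big(T_{m,2}\big)-\rho\big(K^{r,r}_{\lfloor\frac{m-1}{2}\rfloor,\lceil\frac{m-1}{2}\rceil}\big)\ge \frac1m ,\qquad m=\tfrac n2 .
\]
This gap estimate is the main obstacle.

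To prove the gap, write $K:=K^{r,r}_{\lfloor\frac{m-1}{2}\rfloor,\lceil\frac{m-1}{2}\rceil}$ and let $w$ be its special vertex, which has degree $2r$. Deleting $w$ leaves $T_{m-1,2}$. Mimicking Lemma \ref{lem:remove pendant vertex-1}, bound $\rho(K)$ through the eigen-equation at $w$: with Perron vector $\textbf{\textit{x}}$ of $K$,
\[
\rho(K)\,\textbf{\textit{x}}_w=\sum_{z\sim w}\textbf{\textit{x}}_z\le 2r\max_z \textbf{\textit{x}}_z .
\]
Since $\rho(K)\approx m/2$, this gives $\textbf{\textit{x}}_w=O(r/m)\max_z\textbf{\textit{x}}_z$, and together with the near-uniformity of $\textbf{\textit{x}}$ on $T_{m-1,2}$ this yields
\[
\rho(K)\le \rho(T_{m-1,2})+O\!\left(\frac{r^2}{m^2}\right).
\]
A cleaner route is the quotient matrix of the equitable partition $\{w\}$, the $r$-neighbourhoods in each part, and the remainders; it gives an explicit cubic or quartic whose largest root can be compared directly with $\rho(T_{m-1,2})$.

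On the other side,
\[
\rho(T_{m,2})-\rho(T_{m-1,2})=\sqrt{\lfloor m^2/4\rfloor}-\sqrt{\lfloor (m-1)^2/4\rfloor}\ge \frac{m-2}{2m}\cdot\frac{2}{1}\cdot\frac12\approx\frac12 ,
\]
which is the same computation as in the $r=2$ case of Claim \ref{clm:gap-1}. The error term $O(r^2/m^2)$ is negligible once $m\ge 8(r^2+r+2)$, so the difference exceeds $\frac1m$. This contradicts the lower bound, so Case 2 is impossible and $G\cong T_{\frac n2,2}uvT_{\frac n2,2}$. That $T_{\frac n2,2}uvT_{\frac n2,2}$ is itself $B_{r+1}$-free is clear, since it is bipartite.
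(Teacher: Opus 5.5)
Your proposal is correct and has the same skeleton as the paper's proof. Part (1) comes from Theorem~\ref{thm:F-free and odd n} with Theorem~\ref{thm:ZL-book-1}. Part (2) comes from Theorem~\ref{thm:F-free and even n}: the bipartite case is settled by saturation plus Lemma~\ref{lem:complete multipartite graph for even n}, and the non-bipartite case by Theorem~\ref{thm:LM-book-1} played against Lemma~\ref{prop:asymptotic value for F}.

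The one genuine difference is the final gap estimate. Put $m=\frac n2$ and $K:=K^{r,r}_{\lfloor\frac{m-1}{2}\rfloor,\lceil\frac{m-1}{2}\rceil}$.
\begin{itemize}
\item \emph{Paper's route.} It uses the quartic $f(n,r,x)$ from the equitable partition of $K$. It checks $f(n,r,\frac{n-1}{4})>0$ separately for $n=4k$ and $n=4k+2$, and concludes $\rho(K)<\frac{n-1}{4}<\rho(T_{m,2})-\frac1m$.
\item \emph{Your route.} You compare $K$ with $K-w=T_{m-1,2}$ and use the jump $\rho(T_{m,2})-\rho(T_{m-1,2})=\lfloor\frac m2\rfloor/(\sqrt{\lfloor m^2/4\rfloor}+\sqrt{\lfloor (m-1)^2/4\rfloor})>\lfloor\frac m2\rfloor/m$. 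Your displayed chain for this jump is garbled, but the bound is right.
\end{itemize}

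What your route buys: it needs no characteristic polynomial, and it shows transparently that the available margin is about $\frac12$, far more than the required $\frac1m$. It also avoids a step the paper leaves implicit. Knowing $f(\frac{n-1}{4})>0$ only says that an even number of roots exceed $\frac{n-1}{4}$. To rule out two such roots one still needs $\lambda_2(K)\le\rho(T_{m-1,2})\le\frac{m-1}{2}$ from interlacing.

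What the paper's route buys: it is fully explicit. Your perturbation step is stated only up to $O(\cdot)$, and "negligible once $m\ge 8(r^2+r+2)$" needs a constant. This is short to supply:
\begin{itemize}
\item Let $\mathbf{x}$ be the unit Perron vector of $K$ and $\mathbf{x}'$ its restriction to $V(K)\setminus\{w\}$. Then $\rho(K)=(\mathbf{x}')^{T}A(T_{m-1,2})\mathbf{x}'+2\rho(K)\mathbf{x}_w^2$, so $\rho(K)(1-2\mathbf{x}_w^2)\le\rho(T_{m-1,2})(1-\mathbf{x}_w^2)$.
\item Each $z\neq w$ satisfies $\rho(K)\mathbf{x}_z\le(\text{sum of }\mathbf{x}\text{ over the opposite part})+\mathbf{x}_w$. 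Summing over the $2r$ neighbours of $w$ gives $\mathbf{x}_w(\rho(K)^2-2r)\le r\sqrt{m-1}$.
\item Use $\rho(K)^2\ge\frac{m(m-2)}{4}$ and $r^2<\frac m8$ (hence also $2r<\frac m4$). This yields $\mathbf{x}_w^2<\frac{2(m-1)}{m(m-3)^2}$, and therefore $\rho(K)-\rho(T_{m-1,2})<\frac2m$.
\item Consequently $\rho(T_{m,2})-\rho(K)>\frac{m-5}{2m}\ge\frac1m$, which is what you need.
\end{itemize}

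A small boundary slip, shared with the paper's one-line proof of (1). When $r$ is odd, $n=13r$ is an admissible odd order. There, $n>2n_F=13r$ fails and $\frac{n-1}{2}<\frac{13}{2}r$, so Theorem~\ref{thm:ZL-book-1} does not identify the extremal graph at order $\frac{n-1}{2}$. As cited, the argument really covers $n\ge 13r+1$.
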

\begin{proof}
From Theorems \ref{thm:F-free and odd n} and \ref{thm:ZL-book-1}, the result of (1) holds immediately.

For (2), let $n$ is even and $n\ge 16(r^2 + r + 2)$. 
Then, by Lemma \ref{prop:asymptotic value for F}, it has
\begin{align}\label{eq:book-lower bound}
    \lambda_2\left(G^*(n,B_{r+1})\right)>\rho(T_{\frac{n}{2},2})-\frac{2}{n}.
\end{align}
From Theorem \ref{thm:F-free and even n}, we have the extremal graph $G^*(n,B_{r+1})\in \mathcal{E}(H_1,H_2)$, where $H_1$ and $H_2$ are $B_{r+1}$-saturated and of order $\frac{n}{2}$.
We remark that $\lambda_2(G^*(n,B_{r+1}))<\min\{\rho(H_1),\rho(H_2)\}$ by proof of Theorem \ref{thm:F-free and even n}.
We next characterize the graph $G^*(n,B_{r+1})$.

Suppose that $H_1$ and $H_2$ are bipartite, but $H_1w_1w_2H_2$ is not isomorphic to $T_{\frac{n}{2},2}uvT_{\frac{n}{2},2}$.
It is clear that both $H_1$ and $H_2$ are complete bipartite graphs.
From Lemma \ref{lem:complete multipartite graph for even n}, we get $\lambda_2(H_1w_1w_2H_2)<\lambda_2(T_{\frac{n}{2},2}uvT_{\frac{n}{2},2})$.

Suppose that one of $H_1$ and $H_2$ is non-bipartite.
Then from Theorem \ref{thm:LM-book-1} it has
$$\lambda_2(G^*(n,B_{r+1}))<\min\{\rho(H_1),\rho(H_2)\}\le \rho\left(K_{\lfloor\frac{\frac{n}{2}-1}{2}\rfloor,\lceil\frac{\frac{n}{2}-1}{2}\rceil}^{r,r}\right).$$
By a computational result (see also \cite{LM25}), the spectral radius of $K_{\lfloor\frac{\frac{n}{2}-1}{2}\rfloor,\lceil\frac{\frac{n}{2}-1}{2}\rceil}^{r,r}$, denoted by $\rho_{n,r}$, is the largest root of 
$$f(n,r,x)=x^4-(2r+st)x^2-2r^2x+(2str-sr^2-tr^2),$$
where $s=\left\lfloor\frac{\frac{n}{2}-1}{2}\right\rfloor$ and $t=\left\lceil\frac{\frac{n}{2}-1}{2}\right\rceil$.
When $k\ge 4r^2$, if $n=4k$, then 
$$f\left(4k,r,k-\frac{1}{4}\right)=\frac{1}{2}k^3-\frac{3}{16}k^2+\frac{1}{256}-\left(4k-\frac{3}{2}\right)r^2-\left(k+\frac{1}{8}\right)r>0;$$
and if $n=4k+2$, then
$$f\left(4k+2,r,k+\frac{1}{4}\right)=\frac{1}{2}k^3+\frac{5}{16}k^2+\frac{1}{16}k+\frac{1}{256}-\left(4k+\frac{1}{2}\right)r^2-\left(k+\frac{1}{8}\right)r>0.$$
So $\rho\left(K_{\lfloor\frac{\frac{n}{2}-1}{2}\rfloor,\lceil\frac{\frac{n}{2}-1}{2}\rceil}^{r,r}\right)<\frac{n-1}{4},$
which follows that
$$\lambda_2(G^*(n,B_{r+1}))<\rho\left(K_{\lfloor\frac{\frac{n}{2}-1}{2}\rfloor,\lceil\frac{\frac{n}{2}-1}{2}\rceil}^{r,r}\right)<\frac{n-1}{4}.$$
Moreover, by Eq.\,(\ref{eq:book-lower bound}), we know
\begin{align}\label{eq:lower bound Tn/2,2}
    \lambda_2(G^*(n,B_{r+1}))>\rho(T_{\frac{n}{2},2})-\frac{2}{n}=\sqrt{\left\lfloor\frac{n^2}{16}\right\rfloor}-\frac{2}{n}>\sqrt{\frac{n^2}{16}-1}-\frac{2}{n}>\frac{n}{4}-\frac{6}{n}>\frac{n-1}{4},
\end{align}
a contradiction.

So we obtain that $G^*(n,B_{k+1})$ is $T_{\frac{n}{2},2}uvT_{\frac{n}{2},2}$, where $u$ (or $v$) is a vertex in the partition set of $\lceil\frac{n}{4}\rceil$ vertices.
\end{proof}

\vskip0.2in
\noindent\textit{3. Odd cycles}

In 2008, Nikiforov \cite{N08} gave a spectral condition for the existence of consecutive length of cycles in graphs of sufficiency large order $n$.
Recently, Zou et al \cite{ZFL26} studied the stability of this condition. 
With the help of the result on the stability, we are able to reduce the restriction of $n$ sufficiency large for the existence of consecutive odd length of cycles.
We introduce these two works.

\begin{theorem}\cite{N08}\label{thm:N-odd cycles}
Let $G$ be a $C_{2k+1}$ graph of $n\ge 320(2k+1)$. Then 
$$\rho(G)\le\rho(T_{n,2}).$$ 
\end{theorem}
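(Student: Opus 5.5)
The statement is Nikiforov's theorem, quoted from \cite{N08}, and is used later as a black box. If I had to prove it I would argue by contradiction. Suppose $G$ is $C_{2k+1}$-free of order $n\ge 320(2k+1)$ and $\rho(G)>\rho(T_{n,2})=\sqrt{\lfloor n^2/4\rfloor}$. The target is an odd cycle of length exactly $2k+1$.

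First I would dispose of the bipartite case. If $G$ is bipartite, Hong's bound $\rho(G)\le\sqrt{m}$ and $m\le\lfloor n^2/4\rfloor$ already give $\rho(G)\le\rho(T_{n,2})$. So $G$ may be taken non-bipartite, and the task is to turn the spectral excess into a $C_{2k+1}$.

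Second, I would extract a dense core by the standard vertex-deletion argument. While some vertex $w$ has degree $d(w)<(\frac12-\varepsilon)|V|$ in the current graph, delete it. The Perron-vector identity $\rho(G-w)\ge\rho(G)\,\frac{1-2x_w^2}{1-x_w^2}$, together with $x_w\le\sqrt{d(w)}/\rho$, shows that $\rho^2$ drops by less than the amount by which $\lfloor |V|^2/4\rfloor$ drops. Hence the excess $\rho>\sqrt{\lfloor |V|^2/4\rfloor}$ survives every deletion. The process must stop long before the graph becomes small, since otherwise a tiny graph would have spectral radius exceeding its order. The result is an induced subgraph $G'$ on $n'\ge cn$ vertices with $\delta(G')\ge(\frac12-\varepsilon)n'$ and still $\rho(G')>\sqrt{\lfloor n'^2/4\rfloor}$. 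In particular Nosal's theorem gives a triangle in $G'$.

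Third, I would show $G'$ contains $C_{2k+1}$, which is the contradiction. Choose a max-cut $V_1\cup V_2$ of $G'$. The minimum-degree condition forces each vertex to have at least $(\frac12-\varepsilon)n'$ neighbours across the cut, so $G'[V_1,V_2]$ is nearly complete bipartite with $|V_i|\approx n'/2$. Because $\rho(G')$ exceeds the bipartite bound, $G'$ cannot be bipartite, so some edge $ab$ lies inside, say, $V_1$. The vertices $a,b$ each have almost $n'/2$ neighbours in $V_2$. In the dense bipartite graph between $N_{V_2}(a)$ and $V_1$ I would then route a path from a neighbour of $a$ to a neighbour of $b$ of any prescribed odd length, greedily, using that every pair in $V_2$ has linearly many common neighbours in $V_1$. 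This path is even in edges counted through $V_1$, and closing it with $ab$ gives a cycle of length $2k+1$, valid once $n'\gg k$.

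The main obstacle is the quantitative bookkeeping in the deletion step. One must choose $\varepsilon$ and track $\rho^2$ against $\lfloor |V|^2/4\rfloor$ so that the core is both dense and of linear size, and so that the final requirement $n'\ge C k$ follows from $n\ge 320(2k+1)$. I would try $\varepsilon=1/8$ first and check that the resulting constants fit under $320$.
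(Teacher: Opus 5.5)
The paper does not prove this statement; it quotes it from Nikiforov~\cite{N08}. Your outline therefore has to stand on its own, and both of its load-bearing steps fail.

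\emph{The deletion step.} The estimates you cite yield at best $\rho(G-w)\ge\rho-d(w)/\rho$, i.e.\ a loss of up to about $2d(w)$ in $\rho^2$. Meanwhile $\lfloor|V|^2/4\rfloor$ drops by only $\lfloor|V|/2\rfloor$. So they protect the excess only when $d(w)\lesssim|V|/4$, not for $d(w)<\frac38|V|$.

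At threshold $\frac38$ the invariant is in fact false. For large odd $n$, let $G$ be $K_{(n+1)/2}\cup K_{(n-3)/2}$ plus a vertex $w$ joined to $d=\lceil 0.36n\rceil$ vertices of the larger clique, and put $s=\frac{n+1}{2}$. The quotient matrix gives $\rho(G)=s-1+\delta$ with $\delta(s+\delta)(s-1+\delta)=d(d+\delta)$. One has $\delta>\frac12$ exactly when $d^2+\frac{d}{2}>\frac{n^2+2n}{8}$, which holds here, so $\rho(G)^2>\frac{(n-1)^2}{4}+\frac{n-1}{2}=\lfloor n^2/4\rfloor$. Every vertex other than $w$ has degree at least $\frac{n-5}{2}\ge\frac38 n$, so $w$ is deleted first. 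Then $\rho(G-w)=\frac{n-1}{2}=\sqrt{\lfloor(n-1)^2/4\rfloor}$, so the strict excess is gone and the process stops.

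Separately, mere survival of $\rho>\sqrt{\lfloor|V|^2/4\rfloor}$ gives no lower bound on $n'$. For that you need the excess $\rho^2-|V|^2/4$ to grow by $\Omega(|V|)$ at each deletion, which again forces a threshold below $|V|/4$.

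\emph{The third step.} Even granting a core with $\delta(G')\ge(\frac12-\varepsilon)n'$ and $\rho(G')>\sqrt{\lfloor n'^2/4\rfloor}$, the conclusion does not follow. A max cut only guarantees each vertex half of its degree across the cut, about $(\frac14-\frac{\varepsilon}{2})n'$. Such a core also need not be close to bipartite. For example, $K_{n'/2+2}\cup K_{n'/2-2}$ qualifies, and its max cut splits each clique in half. Then $G'[V_1,V_2]$ is far from complete, and vertices of $V_2$ in different cliques have no common neighbours, so your greedy routing has nothing to work with.

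Density alone cannot give the structure you want; it has to come from $C_{2k+1}$-freeness. One route is a stability statement for the core whose constants are linear in $k$; a removal-lemma style stability would not give a threshold like $n\ge 320(2k+1)$. The other is a separate argument producing $C_{2k+1}$ when the core is far from bipartite, e.g.\ results on cycle lengths in graphs of linear minimum degree. That ingredient is the real content of Nikiforov's theorem, and your plan does not supply it.
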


Let $S_{2k-1}(T_{n-2k+1,2})$ be the graph obtained from $T_{n-2k+1,2}$ by subdividing an edge $2k-1$ times, and $C_{2l+1}(T_{n-2l,2})$ be the graph obtained by identifying a vertex of $C_{2l+1}$ and a vertex of $T_{n-2l,2}$ in a color part of $\left\lfloor
\frac{n-2l}{2}\right\rfloor$ vertices.
For $1\le l\le k$, a $\{C_3, C_5, \ldots, C_{2l-1}, C_{2k+1}\}$-free graph contains no member in $\{C_3, C_5, \ldots, C_{2l-1}, C_{2k+1}\}$ as a subgraph; in particular, it is $C_{2k+1}$-free when $l=1$.

\begin{theorem}\cite{ZFL26}\label{thm:ZFL-odd cycles}
Let $1 \le l \le k$ and $G$ be a $\{C_3, C_5, \ldots, C_{2l-1}, C_{2k+1}\}$-free graph on $n$ vertices, where $n \ge 187k$.\\
(1) If $l=k$ and $\rho(G) \ge \rho(S_{2k-1}(T_{n-2k+1,2}))$, then $G$ is bipartite, unless $G = S_{2k-1}(T_{n-2k+1,2})$.\\
(2) If $l\le k-1$ and $\rho(G) \ge \rho(C_{2l+1}(T_{n-2l,2}))$, then $G$ is bipartite, unless $G = C_{2l+1}(T_{n-2l,2})$.
\end{theorem}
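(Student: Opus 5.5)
The plan is to run the standard spectral stability argument: push $G$ into a near-Tur\'an structure, then compare spectral radii among the few possible non-bipartite configurations. Suppose $G$ is non-bipartite, $\{C_3,\ldots,C_{2l-1},C_{2k+1}\}$-free on $n\ge 187k$ vertices, and $\rho(G)$ is at least the spectral radius of the claimed extremal graph ($S_{2k-1}(T_{n-2k+1,2})$ if $l=k$, or $C_{2l+1}(T_{n-2l,2})$ if $l\le k-1$). We aim to show that $G$ equals that extremal graph.

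\emph{Lower bound and density.} Both extremal graphs contain $T_{n-2k+1,2}$ or $T_{n-2l,2}$ as a subgraph, so Lemma \ref{lm:subgraph} gives $\rho(G)\ge \frac{n}{2}-O(k)$. Let $\textbf{x}$ be the Perron vector. From $\rho^2\textbf{x}_v=\sum_{w}|N(v)\cap N(w)|\textbf{x}_w$ and the fact that a $C_{2k+1}$-free graph has $e(G)\le \frac{n^2}{4}+O(kn)$, we get that $G$ has $\frac{n^2}{4}-O(kn)$ edges. The Erd\H{o}s--Simonovits stability theorem, or Nikiforov's spectral version behind Theorem \ref{thm:N-odd cycles}, then yields a max-cut partition $V(G)=S\cup T$ with $e(S)+e(T)=O(kn)$ and $|S|,|T|=\frac{n}{2}\pm O(\sqrt{kn})$.

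\emph{Cleaning.} Let $L$ be the set of vertices with degree below $(\frac12-\varepsilon)n$. Deleting one vertex from $L$ costs at most $O(1)$ in $\rho^2$ while gaining back about $\frac{n}{2}$ in the Tur\'an count, so a counting argument bounds $|L|$ by a constant depending on $k$. Every vertex of the core $S\cup T$ minus $L$ is adjacent to all but $O(\varepsilon n)$ vertices of the opposite side. In this dense bipartite core, any two vertices on the same side are joined by paths of every even length between $2$ and $2k$, and two vertices on opposite sides by paths of every odd length in that range. Hence the core spans no edge inside $S$ or inside $T$: such an edge would close a $C_{2k+1}$. Similarly, a vertex of $L$ with neighbours in the core on both sides would close a $C_{2k+1}$ (or, for short ranges, one of the forbidden $C_{2i+1}$). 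So each vertex of $L$ sees the core on at most one side, and every odd cycle of $G$ must pass through $L$. Its length is at least $2l+1$, and in the case $l=k$ at least $2k+3$, because $C_{2k+1}$ itself is forbidden.

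\emph{Local optimisation.} Now $G$ is, up to $O(1)$ vertices, a complete bipartite core plus a small gadget carrying an odd cycle. Using the eigen-equations, the Perron entries of core vertices are about $\frac{1}{\sqrt n}$, while entries on gadget vertices outside the core are $O(\frac{1}{n^{3/2}})$ times their number of core neighbours. This lets us evaluate each candidate's spectral radius as $\rho(T_{n',2})$ plus an explicit second-order correction. The candidates are:
\begin{itemize}
\item an odd cycle attached to the core at one vertex;
\item a subdivided core edge;
\item a path joining two same-side core vertices.
\end{itemize}
Comparing these corrections, in the style of the computation in Claim \ref{clm:gap-1}, should show the following. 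For $l\le k-1$ the unique best configuration is a shortest admissible cycle $C_{2l+1}$ sharing a single vertex with the larger side, which gives $C_{2l+1}(T_{n-2l,2})$. For $l=k$, where cycles of length $2k+1$ are excluded, the best configuration is an edge subdivided by $2k-1$ vertices, which gives $S_{2k-1}(T_{n-2k+1,2})$. Any other non-bipartite configuration falls strictly below the threshold, and otherwise $G$ is bipartite.

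I expect this last comparison to be the main obstacle. All the candidates have spectral radius within $O(1/n)$ of one another, so the proof needs precise expansions of $\rho$ for each gadget. It also has to absorb the leftover imperfections of the core (missing core edges, unequal part sizes) without losing the required strict inequality. I would first reduce to the case where the core is exactly complete bipartite, by showing that adding a missing core edge keeps $G$ free of the forbidden cycles and increases $\rho$, and only then carry out the explicit characteristic-polynomial comparison.
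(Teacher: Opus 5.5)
The paper does not prove this statement; it quotes it from \cite{ZFL26}. Your plan therefore has to stand on its own, and as written it has a genuine gap. The stability and cleaning steps deliver only the routine part: a nearly complete bipartite core, with every odd cycle running through a bounded set $L$. The actual content of the theorem, identifying the unique non-bipartite maximiser, is only announced (``should show''). Carrying it out needs two steps.

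\emph{Counting outside vertices.} You must show that the number of vertices outside the core equals the minimum gadget size: $2k-1$ if $l=k$, and $2l$ if $l\le k-1$. This must also dispose of low-degree vertices lying on no odd cycle, and of gadget vertices with several core neighbours. Your scale heuristic is wrong here. Since $\rho(T_{m,2})\approx m/2$, configurations with different numbers of bounded-degree outside vertices are separated by $\Theta(1)$, not $O(1/n)$.

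\emph{Fine comparison.} Among configurations of that size you must compare the balance of the core, which core edges are absent, and where the gadget attaches, at scales from $1/n$ down to about $1/n^3$. Your shortcut here fails. Adding missing core edges until the core is exactly complete bipartite need not preserve freeness. Indeed $S_{2k-1}(T_{n-2k+1,2})$ itself lacks exactly one core edge $uv$, and adding it closes a $C_{2k+1}$ with the subdivision path. In general $st$ cannot be added whenever $G$ has an $s$--$t$ path of length $2i$ with $C_{2i+1}$ forbidden. Such forced non-edges must stay in the analysis. It is precisely the trade-off between their cost (about $2x_sx_t=\Theta(1/n)$ for the unit Perron vector $x$) and the $\Theta(1)$ cost of extra outside vertices that makes the subdivision beat, for example, a pendant $C_{2k+3}$.

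A related error in case (2): the cycle attaches at a vertex of the part of size $\lfloor\frac{n-2l}{2}\rfloor$. That is the smaller part, whose vertices carry the larger Perron entries, not the larger side as you wrote.

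Independently, the statement is claimed for every $n\ge 187k$, whereas each of your ingredients is purely asymptotic. The Erd\H{o}s--Simonovits theorem only gives $e(S)+e(T)=o(n^2)$; the bound $O(kn)$ you use needs an explicit linear-stability theorem for $C_{2k+1}$-free graphs. The cleaning bounds $|L|$ only by an unspecified function of $k$ and $\varepsilon$. The core paths of all lengths up to $2k$ and the Perron-entry expansions both need $n$ large compared with these quantities. So even completed, your plan would prove the result only for $n\ge n_0(k)$ with $n_0$ unspecified. You cannot lean on Theorem \ref{thm:N-odd cycles} either, since it needs $n\ge 320(2k+1)>187k$.

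A smaller point: the edge count $e(G)\ge \frac{n^2}{4}-O(kn)$ does not come from the upper bound on $e(G)$ you quote, which plays no role. It follows from $\rho^2\le \max_v\sum_{w\in N(v)}d(w)\le e(G)+\max_v e(N(v))$ together with $e(N(v))\le (k-1)d(v)$. The latter holds by Erd\H{o}s--Gallai, because $N(v)$ contains no path on $2k$ vertices.
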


We can give the condition on the second largest eigenvalue for the existence of odd cycles.
Before giving the result, we need to introduce two conclusions about the effects of the spectral radius by graph operations.

\begin{lemma}\cite{WG20}\label{lem:coalescence}
    Let $G$ be the coalescence of two graphs $G_1$ and $G_2$, identifying a vertex of $G_1$ and a vertex of $G_2$, then
    $$\rho^2(G)\le\rho^2(G_1)+\rho^2(G_2),$$
    with the equality holds if and only if $G$ is a star.
\end{lemma}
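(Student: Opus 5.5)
The plan is to work with the Perron vector of the coalescence and split the quadratic form $\|A\textbf{\textit{x}}\|^2=\rho^2(G)$ along the two blocks. Let $w$ be the identified vertex, let $\textbf{\textit{x}}$ be the unit Perron vector of $G$, and for $i=1,2$ let $\textbf{\textit{x}}_i=\textbf{\textit{x}}\big|_{V(G_i)}$; both restrictions contain the coordinate $\textbf{\textit{x}}_w$. Then $\|\textbf{\textit{x}}_1\|^2+\|\textbf{\textit{x}}_2\|^2=1+\textbf{\textit{x}}_w^2$. Put $a_i=\|\textbf{\textit{x}}_i\|^2-\textbf{\textit{x}}_w^2$, so that $a_1+a_2=1-\textbf{\textit{x}}_w^2$. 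Let $s_i=\sum_{v\in N_{G_i}(w)}\textbf{\textit{x}}_v$ and $d_i=d_{G_i}(w)$.

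Since $E(G)$ is the disjoint union of $E(G_1)$ and $E(G_2)$, we have $(A\textbf{\textit{x}})_v=(A(G_i)\textbf{\textit{x}}_i)_v$ for every $v\in V(G_i)\setminus\{w\}$. At the shared vertex, $(A\textbf{\textit{x}})_w=s_1+s_2$ while $(A(G_i)\textbf{\textit{x}}_i)_w=s_i$. Hence
$$\rho^2(G)=\|A\textbf{\textit{x}}\|^2=\|A(G_1)\textbf{\textit{x}}_1\|^2+\|A(G_2)\textbf{\textit{x}}_2\|^2+2s_1s_2\le \rho_1^2\|\textbf{\textit{x}}_1\|^2+\rho_2^2\|\textbf{\textit{x}}_2\|^2+2s_1s_2,$$
where $\rho_i=\rho(G_i)$. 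Using $\|\textbf{\textit{x}}_1\|^2=1-a_2$ and $\|\textbf{\textit{x}}_2\|^2=1-a_1$, the right-hand side equals $\rho_1^2+\rho_2^2-\rho_1^2a_2-\rho_2^2a_1+2s_1s_2$. It therefore suffices to prove the cross-term estimate
$$2s_1s_2\le \rho_1^2a_2+\rho_2^2a_1.$$

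For the cross term, apply Cauchy--Schwarz at $w$: $s_i^2\le d_i\sum_{v\in N_{G_i}(w)}\textbf{\textit{x}}_v^2\le d_i a_i$. Combine this with the standard bound $d_i\le \Delta(G_i)\le\rho_i^2$, which holds because the star $K_{1,d_i}$ is a subgraph of $G_i$ (Lemma \ref{lm:subgraph}). This gives $s_1s_2\le\rho_1\rho_2\sqrt{a_1a_2}$. AM--GM then yields $2\rho_1\rho_2\sqrt{a_1a_2}\le\rho_1^2a_2+\rho_2^2a_1$, which finishes the inequality.

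The step needing the most care is the equality case, which I expect to be the main obstacle; the inequality itself is routine once the decomposition of $\|A\textbf{\textit{x}}\|^2$ is written down. Equality forces $d_i=\rho_i^2$ for both $i$. Since $\rho(H)^2\ge\Delta(H)$ with equality for a connected $H$ only when $H$ is a star centred at the maximum-degree vertex, and $G_1,G_2$ are connected, each $G_i$ must be a star centred at $w$ (or a single vertex, a degenerate case to handle separately). Equality also requires $V(G_i)=N_{G_i}[w]$, so that $\sum_{v\in N_{G_i}(w)}\textbf{\textit{x}}_v^2=a_i$, and that $\textbf{\textit{x}}_i$ be an eigenvector of $A(G_i)^2$ for $\rho_i^2$. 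Then $G$ is a star centred at $w$. Conversely, for a star $K_{1,d_1+d_2}$ with $w$ as centre, $\rho^2=d_1+d_2=\rho_1^2+\rho_2^2$, so equality holds. Here I must state explicitly that $w$ is the centre of both stars; if $w$ were a leaf, the coalescence could be a path-like non-star, and the strict inequality in that case comes from $d_i<\rho_i^2$.
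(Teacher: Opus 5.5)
The paper does not prove this lemma; it quotes it from \cite{WG20}. So there is no in-paper argument to compare with, and your proposal has to stand on its own.

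For the inequality, it does, and the argument is complete.
\begin{itemize}
\item The splitting $\|A\mathbf{x}\|^2=\|A(G_1)\mathbf{x}_1\|^2+\|A(G_2)\mathbf{x}_2\|^2+2s_1s_2$ is correct, because $w$ is the only vertex whose row of $A(G)$ meets both edge sets.
\item The bookkeeping $\|\mathbf{x}_1\|^2=1-a_2$, $\|\mathbf{x}_2\|^2=1-a_1$ is correct.
\item The bound $|s_i|\le\sqrt{d_ia_i}\le\rho_i\sqrt{a_i}$, followed by AM--GM, gives exactly the cross-term estimate $2s_1s_2\le\rho_1^2a_2+\rho_2^2a_1$.
\end{itemize}
This part uses neither connectivity of $G$ nor positivity of $\mathbf{x}$.

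The equality part is right in substance, but its hypotheses must be stated, one inference needs justifying, and the converse needs one more observation.

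\emph{Hypotheses.} The characterization is false unless $G_1$ and $G_2$ are connected and each has at least one edge. If $G_1=K_1$, then $G=G_2$ and equality holds for every $G_2$. So your ``degenerate case to handle separately'' cannot be handled; it must be excluded by hypothesis. Likewise, take $G_1=2K_{1,2}$ and glue the centre of $G_2=K_{1,2}$ to the centre of one component of $G_1$. Then $G=K_{1,4}\cup K_{1,2}$, and $4=2+2$ gives equality for a non-star.

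\emph{The step ``equality forces $d_i=\rho_i^2$''.} Under the hypotheses above, $G$ is connected, so $\mathbf{x}>0$. Hence $a_1,a_2>0$ and $\rho_1,\rho_2\ge 1$. You should say this, because it is what makes $s_1s_2=\rho_1\rho_2\sqrt{a_1a_2}>0$ force $s_i=\rho_i\sqrt{a_i}$ for each $i$ separately. From that you get $d_i=\rho_i^2$ and $N_{G_i}(w)=V(G_i)\setminus\{w\}$. Then $K_{1,d_i}$ is a spanning subgraph of the connected graph $G_i$, and strict monotonicity of $\rho$ shows $G_i$ is exactly the star centred at $w$.

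\emph{The converse.} You do not need to assume that $w$ is the centre; it is forced. If $G$ is a star and both $G_i$ are nontrivial, then $w$ is a cut vertex of $G$, hence its centre. So every edge of $G_i$ is incident with $w$, giving $\rho_i^2=d_i$, and $\rho^2(G)=d_1+d_2$ gives equality. Your closing remark about $w$ being a leaf concerns coalescences that are not stars, and the strict inequality there is already covered by the ``only if'' argument.
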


A walk $v_1v_2\cdots v_k$ ($k \ge 2$) of $G$ is called an internal path, if these $k$ vertices are distinct (except possibly $v_1 = v_k$), $d(v_1) \ge 3$, $d(v_k) \ge 3$ and $d(v_2) = \cdots =
d(v_{k-1}) = 2$ (unless $k = 2$). 
An edge that belongs to some internal path of $G$ is called
an internal edge of $G$. 
Let $W_n$ ($n \ge 6$) be the tree on $n$ vertices obtained from a path $P_{n-4}$ by attaching two new pendant edges to each end vertex of $P_{n-4}$, respectively.

\begin{lemma}\cite{HS75,WG20}\label{lem:subdivision}
Suppose $uv$ is an internal edge not contained in a triangle of a connected graph $G$. 
Let $G_{uv}$ be the graph obtained from $G$ by contracting edge $uv$ (i.e., removing $uv$ and identifying $u$ and $v$ as a single new vertex). 
Then
$$\rho(G)\le\rho(G_{uv}),$$
with the equality holds if and only if $G = W_n$.
\end{lemma}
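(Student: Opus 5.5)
The statement is attributed to \cite{HS75,WG20}. The plan is to split according to the length of the internal path $P=v_1v_2\cdots v_k$ containing $uv$. Let $\textbf{\textit{x}}$ be the Perron vector of $G$ and write $\rho=\rho(G)$.

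\emph{Case $k\ge 3$.} Here at least one of $u,v$ has degree $2$, and $G$ is exactly the graph obtained from $G_{uv}$ by subdividing one edge of the internal path $P'$ of $G_{uv}$. That path has length $k-2\ge 1$, and its two ends $v_1,v_k$ still have degree at least $3$. So the statement is precisely the Hoffman--Smith subdivision theorem: subdividing an internal edge does not increase the spectral radius, with equality only for $W_n$. I would reproduce its proof. The argument compares $\rho(G_{uv})$ and $\rho(G)$ through the characteristic polynomials via the recursion
$$\phi(G,x)=x\phi(G-v_i,x)-\sum_{z\sim v_i}\phi(G-v_i-z,x)$$
along the path. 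It uses the fact that $\rho(G)\ge 2$, because $G$ contains a vertex of degree at least $3$, so it is not a subgraph of a Smith graph unless $G=W_n$. Write $\rho=t+t^{-1}$. The Perron entries along the internal path then satisfy the linear recurrence $\textbf{\textit{x}}_{v_{i-1}}+\textbf{\textit{x}}_{v_{i+1}}=\rho\,\textbf{\textit{x}}_{v_i}$, and one shows that deleting one interior path vertex and rejoining the path yields a vector $\textbf{\textit{y}}$ with $\textbf{\textit{y}}^TA(G_{uv})\textbf{\textit{y}}\ge\rho\,\textbf{\textit{y}}^T\textbf{\textit{y}}$. Equality forces $\rho=2$, which characterizes $W_n$ among the $\tilde D_n$-type Smith graphs.

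\emph{Case $k=2$.} Now $d(u),d(v)\ge3$, and since $uv$ lies in no triangle, $N(u)\setminus\{v\}$ and $N(v)\setminus\{u\}$ are disjoint, so $G_{uv}$ is simple. I would use a test vector $\textbf{\textit{y}}$ on $G_{uv}$ that agrees with $\textbf{\textit{x}}$ off $\{u,v\}$ and has $\textbf{\textit{y}}_w=\alpha$ on the merged vertex $w$, and optimize over $\alpha$. Using $\sum_{z\in N(u)\setminus\{v\}}\textbf{\textit{x}}_z=\rho\textbf{\textit{x}}_u-\textbf{\textit{x}}_v$ and the symmetric identity for $v$, a short computation gives
$$\textbf{\textit{y}}^TA(G_{uv})\textbf{\textit{y}}-\rho\,\textbf{\textit{y}}^T\textbf{\textit{y}}$$
as an explicit quadratic in $\alpha$ with coefficients in $\textbf{\textit{x}}_u,\textbf{\textit{x}}_v,\rho$. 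The choice $\alpha=\textbf{\textit{x}}_u+\textbf{\textit{x}}_v$ reduces the required inequality to
$$\rho\,\textbf{\textit{x}}_u\textbf{\textit{x}}_v\ge \textbf{\textit{x}}_u^2+\textbf{\textit{x}}_v^2+\textbf{\textit{x}}_u\textbf{\textit{x}}_v.$$
Equality in the final statement cannot occur in this case, because $W_n$ has no internal edge with both ends of degree at least $3$ when $n\ge6$. So strictness here should follow from the strict inequality in the Rayleigh quotient.

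\emph{Main obstacle.} The hard step is this last inequality, or choosing a better $\alpha$ when it fails. I would first try to bound the ratio $\textbf{\textit{x}}_u/\textbf{\textit{x}}_v$ using the eigen-equations and the degree conditions $d(u),d(v)\ge3$, aiming to show it lies in $[1/(\rho-1),\rho-1]$. That range suffices, since it gives $\textbf{\textit{x}}_u^2+\textbf{\textit{x}}_v^2\le(\rho-1)\textbf{\textit{x}}_u\textbf{\textit{x}}_v$ once $\rho\ge2$. If this fails, I would fall back on the polynomial comparison $\phi(G_{uv},\rho)<0$ via the edge-expansion formula $\phi(G,x)=\phi(G-uv,x)-\phi(G-u-v,x)$, in the style of \cite{S74}.
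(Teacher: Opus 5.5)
The paper gives no proof of this lemma; it is only cited from \cite{HS75,WG20}, and later only the subdivision case is used. Your reduction of the case $k\ge3$ to the Hoffman--Smith theorem is correct, and citing that theorem is enough.

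Do not rely on your sketch of its proof, though. Deleting an interior vertex $v_j$ with all other entries unchanged requires $2\rho\,x_{v_{j-1}}x_{v_{j+1}}\ge(x_{v_{j-1}}+x_{v_{j+1}})^2$. This holds at an interior local minimum of the path entries, but it can fail when the minimum sits at an end of the path, e.g.\ $k=3$ with very unbalanced ends. Also, $\rho\ge2$ does not follow from having a vertex of degree $3$, since $\rho(K_{1,3})=\sqrt3$. It follows because an internal path yields a cycle or some $W_m$ as a subgraph.

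The genuine gap is the case $k=2$, in two places.

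\emph{Equality.} Your claim that equality cannot occur there is false. $W_6$ is the double star $S_{2,2}$, and its central edge joins two vertices of degree $3$. Contracting it gives $K_{1,4}$, so both spectral radii equal $2$. For $W_6$ one has $x_u=x_v$, and your reduced inequality reads $2\ge3$.

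\emph{The test vector.} More seriously, no test vector that freezes $x$ off $\{u,v\}$ can work. Your quadratic is $-\rho\alpha^2+2(\rho-1)(x_u+x_v)\alpha-\rho(x_u^2+x_v^2)+2x_ux_v$, with maximum $(\rho-1)^2(x_u+x_v)^2/\rho-\rho(x_u^2+x_v^2)+2x_ux_v$. Take the double star $S_{a,2}$, with $u$ the centre carrying $a$ leaves. The equation at $v$ gives $x_u/x_v=\rho-2/\rho$, which exceeds $\rho-1$ whenever $\rho>2$, so your hoped-for ratio bound is false. Moreover the maximum above is negative for large $a$ (e.g.\ $a=100$). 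The polynomial fallback is not carried out, and edges $uv$ lying on a cycle of length at least $4$ are not addressed at all.

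What works is to eliminate $H=G-u-v$ rather than freeze it.

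\emph{Setup.} Let $\rho=\rho(G)>\rho(H)$, $R(\lambda)=(\lambda I-A(H))^{-1}$ and $R=R(\rho)$. Let $a,b$ be the indicator vectors of $N(u)\setminus\{v\}$ and $N(v)\setminus\{u\}$; these sets are disjoint because $uv$ lies in no triangle. Put $p=a^TRa$, $q=b^TRb$ and $m=a^TRb\ge0$.

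\emph{Criterion.} We have $\det(\lambda I-A(G_{uv}))=\phi(H,\lambda)\big(\lambda-(a+b)^TR(\lambda)(a+b)\big)$, and the second factor is increasing on $(\rho(H),\infty)$. Hence $\rho(G_{uv})\ge\rho$ iff $p+q+2m\ge\rho$.

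\emph{Eigen-equations.} Substituting $x|_{V(H)}=R(x_ua+x_vb)$ into the equations at $u$ and $v$ gives $\rho-p=(1+m)x_v/x_u$ and $\rho-q=(1+m)x_u/x_v$. With $Q=\rho-q$, the goal becomes $Q+(1+m)^2/Q\le\rho+2m$. Equivalently, $Q$ must lie between the roots $Q_*\le1+m\le Q^*$ of $Q^2-(\rho+2m)Q+(1+m)^2$. These roots are real because the discriminant is $(\rho-2)(\rho+4m+2)\ge0$ and $\rho\ge\rho(W_6)=2$.

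\emph{Bounds.} Assuming $x_u\ge x_v$ gives $Q\ge1+m$. By the Neumann series, $q\ge|N(v)\setminus\{u\}|/\rho\ge2/\rho$. Therefore $Q\le\rho-2/\rho\le\tfrac12\big(\rho+\sqrt{\rho^2-4}\big)\le Q^*$.

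\emph{Equality.} Equality forces $\rho=2$, i.e.\ $G=W_6$. For a bridge ($m=0$) this is exactly Schwenk's coalescence/edge-join computation, with criterion $x_u/x_v+x_v/x_u\le\rho$.
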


\begin{theorem}\label{thm:odd cycles-2}
Let $1\le l\le k$ be integers and $G$ be a $\{C_3, C_5, \ldots, C_{2l-1}, C_{2k+1}\}$-free connected graph of order $n\ge 640(2k+1)$. 
We have
(1) if $n$ is odd, then
$$\lambda_2(G)\le \rho\left(T_{\frac{n-1}{2},2}\right).$$
Equality holds if and only if $G\in \mathcal{I}(T_{\frac{n-1}{2},2},T_{\frac{n-1}{2},2})$ and $G$ is $\{C_3, C_5, \ldots, C_{2l-1}, C_{2k+1}\}$-free; (2) if $n$ is even, then
$$\lambda_2(G)\le \lambda_2\left(T_{\frac{n}{2},2}uvT_{\frac{n}{2},2}\right),$$
where $u$ (or $v$) is a vertex of $T_{\frac{n}{2},2}$ in the partition set of $\lceil\frac{n}{4}\rceil$ vertices.
Equality holds if and only if $G=G^*$.
\end{theorem}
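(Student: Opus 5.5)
The plan mirrors the proofs of Theorems \ref{thm:complete graph-2} and \ref{thm:book-2}, with $F$ now the family $\mathcal{C}=\{C_3,C_5,\ldots,C_{2l-1},C_{2k+1}\}$.

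\emph{Step 1: extremal value of $\rho^*$.} Theorem \ref{thm:N-odd cycles} applies on $\frac{n-1}{2}$ and $\frac{n}{2}$ vertices, since $n\ge 640(2k+1)$ gives $\frac{n}{2}-1\ge 320(2k+1)$. Every $\mathcal{C}$-free graph is $C_{2k+1}$-free, and $T_{m,2}$ is $\mathcal{C}$-free. Hence $\rho^*(m,\mathcal{C})=\rho(T_{m,2})$ for $m\in\{\frac{n-1}{2},\frac{n}{2}-1,\frac{n}{2}\}$.

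\emph{Step 2: applying the general theorems.} No member of $\mathcal{C}$ has a cut edge. Moreover, the auxiliary graphs in Theorems \ref{thm:F-free and odd n} and \ref{thm:F-free and even n} (Remark \ref{rek:broader application}) are $\mathcal{C}$-free. Indeed, $G'_1u_1u_2G'_2$ is bipartite. A graph in $\mathcal{I}(T,T)$ can only create odd cycles through the new vertex, and these are excluded by the hypothesis ``$G$ is $\mathcal{C}$-free'' in the equality statement. Part (1) is then exactly Theorem \ref{thm:F-free and odd n} combined with Step 1.

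For part (2), Theorem \ref{thm:F-free and even n} gives $G^*\in\mathcal{E}(H_1,H_2)$, where $H_1,H_2$ are $\mathcal{C}$-saturated on $\frac{n}{2}$ vertices and
$$\lambda_2(G^*)<\min\{\rho(H_1),\rho(H_2)\}.$$
Lemma \ref{prop:asymptotic value for F} gives $\lambda_2(G^*)>\rho(T_{\frac{n}{2},2})-\frac{2}{n}$.

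\emph{Step 3: both halves bipartite.} If $H_1$ and $H_2$ are both bipartite, saturation forces them to be complete bipartite. Lemma \ref{lem:complete multipartite graph for even n} with $r=2$ then shows that $G^*=T_{\frac{n}{2},2}uvT_{\frac{n}{2},2}$, with $u,v$ of minimum degree, i.e. lying in the part of size $\lceil\frac{n}{4}\rceil$.

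\emph{Step 4: some half non-bipartite (main obstacle).} Suppose $H_1$ is non-bipartite. Since $\frac{n}{2}\ge 187k$, Theorem \ref{thm:ZFL-odd cycles} applies and gives
$$\rho(H_1)\le \max\Big\{\rho\big(S_{2k-1}(T_{\frac n2-2k+1,2})\big),\ \rho\big(C_{2l+1}(T_{\frac n2-2l,2})\big)\Big\}.$$
It remains to show that both quantities are at most $\rho(T_{\frac{n}{2},2})-\frac{2}{n}$, which contradicts the lower bound from Step 2.

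For $C_{2l+1}(T_{m-2l,2})$ with $m=\frac{n}{2}$, Lemma \ref{lem:coalescence} gives
$$\rho^2\le \rho^2(T_{m-2l,2})+4\le \Big\lfloor\frac{(m-2)^2}{4}\Big\rfloor+4.$$
This is smaller than $\frac{m^2}{4}-1-\frac{m}{2}$, roughly, since it has deficit about $m-5$. So $\rho\le \frac m2-\Omega(1)$, well below $\frac{m}{2}-O(\frac1m)$.

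For $S_{2k-1}(T_{m-2k+1,2})$, I will contract internal edges on the subdivided path with Lemma \ref{lem:subdivision}; the path lies on no triangle and the graph is not $W_n$. Repeated contraction yields $T_{m-2k+1,2}$ with one edge subdivided once, or a similar small modification. That graph is contained in a graph obtained from $T_{m-2k+1,2}$ by adding a pendant-like vertex, so its spectral radius is at most $\sqrt{\rho^2(T_{m-2k+1,2})+O(1)}$ by Lemma \ref{lem:remove pendant vertex-1} or Lemma \ref{lem:coalescence}. That is at most $\frac{m-2k+1}{2}+O(1/m)<\frac m2-1$.

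The delicate point is making these estimates precise with the explicit bound $n\ge 640(2k+1)$, and handling the case $k=1$ (then $l=1$) separately if needed. Once this is done, the non-bipartite case is excluded, and $G^*=T_{\frac{n}{2},2}uvT_{\frac{n}{2},2}$ is the unique extremal graph.
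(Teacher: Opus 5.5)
Your overall plan (Steps 1--3, and the $C_{2l+1}(T_{m-2l,2})$ estimate in Step 4 via Lemma \ref{lem:coalescence}) follows the paper's proof and is fine. The gap is in the $l=k$ branch of Step 4.

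You stop contracting when a single subdivision vertex $p$ remains. You then bound $T_{m-2k+1,2}-ab+\{pa,pb\}$ by $\sqrt{\rho^2(T_{m-2k+1,2})+O(1)}$, citing Lemma \ref{lem:remove pendant vertex-1} or Lemma \ref{lem:coalescence}. Neither lemma applies. Here $p$ has degree $2$ and is adjacent to both ends $a,b$ of the deleted edge, so the graph is not obtained by attaching a pendant vertex. It is also not a one-vertex coalescence, since it is glued along two vertices. For $k=1$ (the triangle-free case, which is squarely inside the theorem) you perform no contraction at all, so this unjustified bound is your entire argument. In addition, your stated target $\frac{m-2k+1}{2}+O(1/m)<\frac m2-1$ is false when $k=1$. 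The target you actually need, with $m=\frac n2$, is only $\rho(T_{m,2})-\frac1m$, which comes from Lemma \ref{prop:asymptotic value for F}.

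The repair, which is what the paper does, is to contract the last edge as well. On the path $a\,p\,b$, the edge $ap$ is still internal, since $d(a),d(b)\ge 3$ and $d(p)=2$. It lies in no triangle because $ab$ has been deleted. So $2k-1$ applications of Lemma \ref{lem:subdivision} give
$$\rho\big(S_{2k-1}(T_{m-2k+1,2})\big)<\rho(T_{m-2k+1,2})\le\rho(T_{m-1,2})\le\frac{m-1}{2}$$
for every $k\ge 1$. Since $\rho(T_{m,2})-\frac1m\ge\sqrt{\frac{m^2-1}{4}}-\frac1m>\frac{m-1}{2}$ for $m\ge 3$, this contradicts the lower bound, and no separate treatment of $k=1$ is needed.

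A minor point: $n\ge 640(2k+1)$ only gives $\frac n2-1\ge 320(2k+1)-1$, not $\ge 320(2k+1)$. This is harmless, because you never need to identify $\rho^*$ at order $\frac n2-1$.
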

\begin{proof}
From Theorems \ref{thm:F-free and odd n} and \ref{thm:N-odd cycles}, the result of (1) holds immediately.

For (2), let $n\ge 640(2k+1)$ be an even integer, and $G^*$ be a graph with the maximum second largest eigenvalue among all $\{C_3, C_5, \ldots, C_{2l-1}, C_{2k+1}\}$-free connected graph of order $n$. 
Then, from Theorem \ref{thm:N-odd cycles} and Lemma \ref{prop:asymptotic value for F}, it has
\begin{align}\label{eq:odd cycles-lower bound}
    \lambda_2\left(G^*\right)>\rho(T_{\frac{n}{2},2})-\frac{2}{n}>\frac{n-1}{4},
\end{align}
where the second inequality dues to Eq.\,(\ref{eq:lower bound Tn/2,2}).
From Theorem \ref{thm:F-free and even n}, we have the extremal graph $G^*\in \mathcal{E}(H_1,H_2)$, where $H_1$ and $H_2$ are $\{C_3, C_5, \ldots, C_{2l-1}, C_{2k+1}\}$-saturated and of order $\frac{n}{2}$.
We remark that $\lambda_2(G^*)<\min\{\rho(H_1),\rho(H_2)\}$ by proof of Theorem \ref{thm:F-free and even n}.
We are ready to characterize the graph $G^*$.

Suppose that $H_1$ and $H_2$ are bipartite, but $G^*$ is not isomorphic to $T_{\frac{n}{2},2}uvT_{\frac{n}{2},2}$.
By a similar method as Proof of Theorem \ref{thm:book-2}(2), we can get a contradiction.

Suppose that at least one of $H_1$ and $H_2$ is non-bipartite.
Then from Theorem \ref{thm:ZFL-odd cycles} it has
$$\lambda_2(G^*)<\min\left\{\rho(H_1),\rho(H_2)\right\}\le 
\begin{cases}
    & \rho\big(S_{2k-1}(T_{\frac{n}{2}-2k+1,2})\big), \hspace{2mm} \mathrm{if}\hspace{2mm} l=k;\\
    & \rho\big(C_{2l+1}(T_{\frac{n}{2}-2l,2})\big), \hspace{7mm} \mathrm{if}\hspace{2mm} l\le k-1.
\end{cases}$$
If $l=k$, then, by Lemma \ref{lem:subdivision}, we have
$$\rho\big(S_{2k-1}(T_{\frac{n}{2}-2k+1,2})\big)<\rho(T_{\frac{n}{2}-2k+1,2})\le\rho(T_{\frac{n}{2}-1,2})=\sqrt{\left\lfloor\frac{(\frac{n}{2}-1)^2}{4}\right\rfloor}\le \frac{n-2}{4}.$$
If $l\le k-1$, then, by Lemma \ref{lem:coalescence}, we have
\begin{align*}
    \rho\left(C_{2l+1}(T_{\frac{n}{2}-2l,2})\right)&\le\sqrt{\rho^2(T_{\frac{n}{2}-2l,2})+\rho^2(C_{2l+1})}\\
    &\le\sqrt{\rho^2(T_{\frac{n}{2}-2,2})+4}\\
    &=\sqrt{\left\lfloor\frac{(\frac{n}{2}-2)^2}{4}\right\rfloor+4}\\
    &\le\sqrt{\frac{n^2-8n+80}{4^2}}\\
    &<\frac{n-3}{4}.
\end{align*}
This indicates that $\lambda_2(G^*)<\frac{n-2}{4}$, a contradiction to Eq.\,(\ref{eq:odd cycles-lower bound}).

So we obtain that $G^*$ is $T_{\frac{n}{2},2}uvT_{\frac{n}{2},2}$, where $u$ (or $v$) is a vertex in the partition set of $\lceil\frac{n}{4}\rceil$ vertices.
\end{proof}

\subsection{Minor-free graphs}

A graph $H$ is said to be a minor of $G$ if $H$ can be obtained from $G$ by a series of edge contractions.
Given a graph family $\mathcal{H}$,
if $G$ not contains any member in $\mathcal{H}$ as a minor, then we call $G$ an $\mathcal{H}$-minor-free graph.

In spectral graph theory, minor-free graphs possess remarkable theoretical properties in their own right.
In this subsection, we focus on the second largest eigenvalue of $K_{r}$- or $K_{s,t}$-minor-free graphs.
Our results are based on some known conclusions maximizing the spectral radius among $K_{r}$- or $K_{s,t}$-minor-free graphs.
Let $G^c$ be the complement of a graph $G$.
Recently, Tait \cite{T19} characterized the $K_r$-minor-free graph of sufficiently large order $n$ with the maximum spectral radius is the join of $K_{r-2}$ and $K^c_{n-r+2}$.
In the same paper \cite{T19}, it is proved the maximum spectral radius of $K_{s,t}$-minor-free graphs with $t\ge s\ge 2$ is obtained by the join of a copy of $K_{s-1}$ and $\frac{n-s+1}{t}$ copies of $K_t$. 
These two works extend existing results in several literature (see \cite{H04, N07, N17}).
Solving a conjecture by Tail, when $t$ is not a factor of $n-s+1$, surprisingly, Zhai and Lin \cite{ZL22} completely determined the graphs with the maximum spectral radius among all $K_{s,t}$-minor-free graphs of any sufficiently large order $n$.

Denote by $G_1\vee G_2$ the join of two disjoint graphs $G_1$ and $G_2$.

\begin{theorem}\cite{T19}\label{thm:Kr minor-1}
Let $r \ge 3$. The $K_r$-minor-free graph of sufficiently large $n$ with
maximum spectral radius is $K_{r-2}\vee K^c_{n-r+2}$.
\end{theorem}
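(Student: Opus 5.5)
The natural route is the standard "eigenvector-weight" method for spectral extremal problems on sparse graph classes. Let $G$ be a $K_r$-minor-free graph on $n$ vertices with maximum spectral radius, and let $\textbf{\textit{x}}$ be its Perron vector, normalized so that $\max_v \textbf{\textit{x}}_v=\textbf{\textit{x}}_z=1$.

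I would use two basic facts throughout.
\begin{enumerate}
\item Since $K_{r-2}\vee K^c_{n-r+2}$ is $K_r$-minor-free,
$$\rho(G)\ge\rho\big(K_{r-2}\vee K^c_{n-r+2}\big)\ge\sqrt{(r-2)(n-r+2)}.$$
\item Being $K_r$-minor-free is closed under taking subgraphs. By the Mader--Thomason bound, every subgraph $H$ of $G$ satisfies $e(H)\le c_r|V(H)|$ for a constant $c_r$ depending only on $r$.
\end{enumerate}

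The first step localizes the large eigenvector entries. Fix small constants $\varepsilon\gg\delta>0$ and set $L=\{v:\textbf{\textit{x}}_v\ge\varepsilon\}$. For any vertex $v$,
$$\rho^2\textbf{\textit{x}}_v=\sum_{u\sim v}\sum_{w\sim u}\textbf{\textit{x}}_w .$$
Combining this with $\rho\,\sum_v \textbf{\textit{x}}_v\le 2c_r\sum_v\textbf{\textit{x}}_v$-type averaging and the sparsity bound, I expect to get two conclusions:
\begin{itemize}
\item $|L|=O_r(1)$;
\item every $v\in L$ satisfies $d(v)\ge(\textbf{\textit{x}}_v-\delta)n$.
\end{itemize}
The key point is that walks of length two avoiding $L$ contribute only $O(c_r\varepsilon n)$, which is negligible against $\rho^2\approx(r-2)n$. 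Applying the same identity at $z$ shows that $\textbf{\textit{x}}_z=1$ forces $d(z)\ge(1-\delta)n$. Iterating the argument gives: every $v\in L$ has $\textbf{\textit{x}}_v\ge 1-\delta'$ and degree at least $(1-\delta')n$.

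The second step, which I expect to be the main obstacle, is pinning $|L|=r-2$ exactly.

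For the upper bound, suppose $|L|\ge r-1$. Then some $r-1$ vertices of $L$ have more than $n/2$ common neighbours. This yields a $K_{r-1,t}$ subgraph with $t$ large, and contracting a matching among the $t$ side produces a $K_r$ minor, a contradiction.

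For the lower bound, suppose $|L|\le r-3$. Then
$$\rho^2=\rho^2\textbf{\textit{x}}_z\le |L|\,n+O(\varepsilon n)+O(c_r n\varepsilon),$$
which is below $(r-2)(n-r+2)$ once $\varepsilon$ is small. This contradicts fact 1.

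The delicate part is making these error terms rigorous with only the linear edge bound available. I would first try double counting over $E(V\setminus L)$, using $e(V\setminus L)\le c_r n$, to bound the non-$L$ contribution.

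In the third step I determine the structure exactly. Let $R=V\setminus L$.
\begin{itemize}
\item \emph{No edges inside $R$.} If $R$ contained an edge $ab$ with both endpoints in the common neighbourhood of $L$, then $L\cup\{a,b\}$ would span a $K_r$ after routing. A short rerouting through the many common neighbours of $L$ handles the general case, so $G[R]$ has no edges.
\item \emph{All edges at $L$ are present.} Every vertex of $R$ must be adjacent to all of $L$, and $L$ must be a clique. Otherwise adding the missing edge strictly increases $\rho$ (Perron--Frobenius, as in Lemma \ref{lm:subgraph} with strictness for connected graphs). The enlarged graph is still $K_r$-minor-free, because any $K_r$ minor in a subgraph of $K_{r-2}\vee K^c_{n-r+2}$ would need $r$ branch sets, yet at most $r-2$ of them can meet $L$, while the remaining ones lie in the independent set $R$ and are therefore single vertices, which are pairwise nonadjacent.
\end{itemize}
Hence $G=K_{r-2}\vee K^c_{n-r+2}$.
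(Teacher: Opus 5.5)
The paper gives no proof of this statement; it is quoted from Tait \cite{T19}, so your outline has to stand on its own. The overall architecture is the right one. Your lower bound ruling out $|L|\le r-3$ is correct, and so is the final edge-addition step once $R$ is known to be independent. However, two steps fail as written.

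\textbf{Step 1.} With only $e(H)\le c_r|V(H)|$, the identity gives $\rho^2\textbf{\textit{x}}_v\le\sum_{u\sim v}d_L(u)+2c_r\varepsilon n$. The walks $v\to u\to w$ with $w\in L\setminus\{v\}$ are \emph{not} negligible: in $K_{r-2}\vee K^c_{n-r+2}$ they contribute about $(r-3)n$ of $\rho^2\approx(r-2)n$. Sparsity only gives $\sum_{u\sim v}d_L(u)\le c_r\,d(v)+O(c_r|L|)$, hence $d(v)=\Omega(\textbf{\textit{x}}_v n/c_r)$ for $\textbf{\textit{x}}_v\gg\varepsilon$. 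Using two thresholds, this yields $|L|=O_r(1)$. It does not yield $d(v)\ge(\textbf{\textit{x}}_v-\delta)n$, nor $d(z)\ge(1-\delta)n$, nor your ``iterating'' claim that every vertex of $L$ has entry near $1$. Also, $\rho\sum_v\textbf{\textit{x}}_v\le 2c_r\sum_v\textbf{\textit{x}}_v$ is false as written, since it would force $\rho\le 2c_r$; you mean $\le 2e(G)\le 2c_r n$.

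The missing ingredient is one you only use later: $G$ contains no $K_{r-1,r}$. Consequently at most $(r-1)\binom{|L|}{r-1}=O(1)$ vertices have $r-1$ or more neighbours in $L$. So $\rho^2\textbf{\textit{x}}_v\le\sigma\,d(v)+O(1)+2c_r\varepsilon n$, where $\sigma$ is the sum of the $r-2$ largest entries. At $v=z$ this forces $r-2$ entries to be $\ge 1-O(\varepsilon)$, and these vertices then have degree $\ge(1-O(\varepsilon))n$. Their common neighbourhood $B$ therefore has size $(1-O(\varepsilon))n$. Any other vertex has at most $r-1$ neighbours in $B$, again because there is no $K_{r-1,r}$. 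So its degree is $O(\varepsilon n)$ and its entry is $O(\varepsilon)$, which gives $|L|=r-2$ directly once the thresholds are suitably separated. Incidentally, in your $K_{r-1,t}$ argument the matching you contract must go \emph{between} the two sides, since the $t$-side is independent.

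\textbf{Step 3.} ``No edges inside $R$'' does not follow from minor-freeness. Take $K_{r-2}\vee K^c_{n-r+2}$, detach a vertex $a$ of the independent side from $L$, and hang it as a pendant on another vertex $b$ of that side. The result is $K_r$-minor-free, $L$ still has degree about $n$, and $ab\in E(R)$. Your routing argument works only when both ends lie in $B$. Your second bullet's minor-freeness check presupposes the first bullet, so vertices of $R\setminus B$ are never handled.

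This case needs extremality. First, $B$ is independent. Second, each $c\in R\setminus B$ has at most one neighbour in $B$. Indeed, if $c$ had two neighbours $p,p'\in B$, take $l_i\in L$ and distinct $b_i\in B\setminus\{p,p'\}$; the branch sets $\{l_i,b_i\}$, $\{p\}$, $\{c,p'\}$ would form a $K_r$ minor. Third, $G[R]$ is $2c_r$-degenerate. Now compare $G$ with $K_{r-2}\vee K^c_{n-r+2}$ using the Perron vector of $G$. The added $L$--$R$ edges gain at least $2(1-O(\varepsilon))\sum_{c\in R\setminus B}\textbf{\textit{x}}_c$. Every deleted edge of $G[R]$ has an end in $R\setminus B$, so these edges lose at most $O(c_r\varepsilon)\sum_{c\in R\setminus B}\textbf{\textit{x}}_c$. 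Hence $\rho$ strictly increases unless $R=B$, and after that your second bullet finishes the proof.
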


Let $H^\star$ be the Petersen graph, and $H_{s,t}$ be a star forest of order $t + 1$, precisely, the disjoint union of $\big\lfloor\frac{t+1}{s+1}\big\rfloor$ stars in which all but at most one are isomorphic to $K_{1,s}$.
Let $S^1(G)$ be a graph obtained by subdividing the edge with minimum edge degree.

\begin{theorem}\cite{T19, ZL22}\label{thm:Kst minor-1}
For $2 \le s \le t$ and $n - s + 1 = pt + q$, where $1 \le q \le t$ and $n$ is sufficiently,
the graph $K_{s-1}\vee H$ attains the maximum spectral radius among all $K_{s,t}$-minor-free graphs of order $n$, where $H$ is 
\begin{align*}
    H=
    \begin{cases}
        (p-1)K_t\cup \overline{H^\star}, &\ \mathrm{if}\ q=2,\ t=8\ \mathrm{and}\ \big\lfloor\frac{t+1}{s+1}\big\rfloor=1;\\
        (p-1)K_t\cup S^1(\overline{H_{s,t}}),\ &\ \mathrm{if}\ q=\big\lfloor\frac{t+1}{s+1}\big\rfloor=2;\\
        (p-q)K_t\cup q\overline{H_{s,t}},\ &\ \mathrm{if}\ q\le 2\big(\big\lfloor\frac{t+1}{s+1}\big\rfloor-1\big)\ \mathrm{except}\ q=\big\lfloor\frac{t+1}{s+1}\big\rfloor=2;\\
        pK_t\cup K_q,\ &\ \mathrm{otherwise}.
    \end{cases}
\end{align*}
\end{theorem}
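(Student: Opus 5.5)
Since this is a cited result of \cite{T19, ZL22}, I would reconstruct it along the standard stability route for spectral extremal problems in minor-closed families. Let $G$ be an extremal $K_{s,t}$-minor-free graph on $n$ vertices and $\textbf{\textit{x}}$ its Perron vector, normalised so that $\max_v \textbf{\textit{x}}_v=1$.

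First I would fix the scale of $\rho(G)$. The construction $K_{s-1}\vee pK_t$ contains no $K_{s,t}$ minor, and a direct computation gives $\rho(G)\ge \rho(K_{s-1}\vee pK_t)=\sqrt{(s-1)n}+O(1)$. In the other direction, I would use the linear edge bound for $K_{s,t}$-minor-free graphs (every minor has at most $Cn$ edges with $C=C(s,t)$) and the same bound inside every subgraph. Combining it with the eigen-equation $\rho^2\textbf{\textit{x}}_v=\sum_{w}\sum_{z\sim w}\textbf{\textit{x}}_z$, I would show that only $O(1)$ vertices have $\textbf{\textit{x}}_v\ge\varepsilon$. Then I would show there are exactly $s-1$ vertices $S=\{u_1,\dots,u_{s-1}\}$ with $\textbf{\textit{x}}_{u_i}\ge 1-\varepsilon$, and that each of them has degree $n-o(n)$.

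Second, I would prove that every $u_i$ is in fact adjacent to all other vertices. The idea is that if some $w$ misses $u_i$, then deleting the $O(1)$ edges from $w$ to other vertices and joining $w$ to all of $S$ strictly increases $\textbf{\textit{x}}^TA\textbf{\textit{x}}$, since $\textbf{\textit{x}}_w$ is tiny compared with the entries on $S$. I must check that this modification creates no $K_{s,t}$ minor; this holds once the structure of the next step is established, so the two steps will be run together. The key structural observation is: if $S$ dominates and $G-S$ had a connected component of order at least $t+1$, that component would supply one more branch vertex together with $t$ others, producing a $K_{s,t}$ minor. Hence every component of $H=G-S$ has at most $t$ vertices, so $G\subseteq K_{s-1}\vee H$ with components of $H$ of order at most $t$. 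Moreover, a component of order exactly $t$ is allowed to be $K_t$, whereas a component with $q<t$ vertices may be forced to avoid certain spanning configurations when combined with the others.

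Third, comes the optimisation step, which I expect to be the main obstacle. With $S$ complete to $H$ (and $S$ a clique, by maximality), $\rho$ is the largest root of an equation of the form $\rho-(s-2)=(s-1)\sum_{C}\textbf{1}^T(\rho I-A(C))^{-1}\textbf{1}$ over the components $C$ of $H$. This sum is increasing in each component's walk counts, so to leading order one maximises $\sum_C 2e(C)$, with lower-order ties broken by the finer walk counts. When $t\mid n-s+1$ this gives $pK_t$ at once. When $q\ne 0$ one must compare splitting the vertices as $(p-q)K_t\cup q\overline{H_{s,t}}$ (where $K_{t+1}$ minus a star forest is the densest order-$(t+1)$ block avoiding the minor together with $S$) versus $pK_t\cup K_q$. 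The comparison comes down to the sign of an explicit expression in $q$ and $\lfloor\frac{t+1}{s+1}\rfloor$. The exceptional cases $S^1(\overline{H_{s,t}})$ and $\overline{H^\star}$ for $t=8$ arise where the edge counts tie and second-order terms decide. I would handle these by computing $\textbf{1}^T(\rho I-A(C))^{-1}\textbf{1}$ to order $\rho^{-3}$, which involves $e(C)$ and $\sum_v d_C(v)^2$. For each candidate block I would verify minor-freeness by a direct case check.
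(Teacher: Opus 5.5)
This statement is only quoted in the paper, from Tait and Zhai--Lin, and no proof is given there. Your skeleton matches theirs: an $(s-1)$-set $S$ of large Perron entries, then domination, then an optimisation over $H=G-S$ via $\rho-(s-2)=(s-1)\sum_C\mathbf 1^T(\rho I-A(C))^{-1}\mathbf 1$.

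\textbf{The gap: components of $H$ are not bounded by $t$.} Your second step claims every component of $H$ has at most $t$ vertices, and this is false. To get the extra branch set you need a connected $B\subseteq C$ touching $t$ disjoint connected sets, i.e.\ a $K_{1,t}$ minor in $C$. A connected graph of order $\ge t+1$ need not have one; a path has no $K_{1,3}$ minor.

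Concretely, for every $t\ge 3$ and every $m$, $K_{s-1}\vee P_m$ is $K_{s,t}$-minor-free. At most $s-1$ branch sets meet $S$, and the others are subpaths. Each subpath touches at most two others, and no two subpaths touch two common ones. A short count then rules out $K_{s,t}$.

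Worse, the claim contradicts the theorem you are proving. $\overline{H_{s,t}}$ has $t+1$ vertices, and $S^1(\overline{H_{s,t}})$ and (for $t=8$) $\overline{H^\star}$ have $t+2$ vertices. So your third step uses blocks that your second step has excluded. You also deferred the minor-freeness check in the domination argument to this structure, so that step is left without justification too.

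\textbf{What domination actually gives, and what is missing.} With $S$ a dominating clique, $K_{s-1}\vee H$ is $K_{s,t}$-minor-free iff $H$ has no $K_{b+1,t-b}$ minor for $0\le b\le s-1$ (put $b$ vertices of $S$ on the $t$-side). Hence $\Delta(H)\le t-1$, $e(H)\le\frac{(t-1)(n-s+1)}{2}$, and every component is $K_{1,t}$-minor-free. But components can be arbitrarily long.

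The missing ingredient is a lower bound on the deficit $\frac{(t-1)|C|}{2}-e(C)$ for \emph{every} admissible component $C$ of every order.
\begin{itemize}
\item For large $C$: having no $K_{1,t}$ minor means no spanning tree with $t$ leaves. By the Ding--Johnson--Seymour theorem this forces $e(C)\le |C|+c_t$, so the deficit grows linearly when $t\ge 4$. For $t=3$, the $K_{2,2}$-condition excludes long cycles.
\item For orders $t+1$ and $t+2$: you must determine the densest admissible graphs exactly. This is where $\overline{H_{s,t}}$ (deficit $\frac{t+1}{2}-\lfloor\frac{t+1}{s+1}\rfloor$), $S^1(\overline{H_{s,t}})$ and $\overline{H^\star}$ come from.
\end{itemize}

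Then minimise the total deficit subject to $\sum_C|C|=pt+q$. For example, comparing $q$ copies of $\overline{H_{s,t}}$ with one $K_q$ (deficit $\frac{q(t-q)}{2}$) gives the threshold $q\le 2(\lfloor\frac{t+1}{s+1}\rfloor-1)$. This settles the first-order problem.

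Only after that does your $\rho^{-3}$ expansion via $\sum_v d_C(v)^2$ decide the genuine ties: $S^1(\overline{H_{s,t}})$ versus $2\overline{H_{s,t}}$, and the case $q=2\lfloor\frac{t+1}{s+1}\rfloor-1$. Without the deficit bound, your optimisation runs over the wrong class of $H$ and cannot produce the stated extremal graphs.
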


So we conclude from Theorems \ref{thm:Kr minor-1} and \ref{thm:Kst minor-1} that $G^*(n,K_r\mathrm{-minor})=K_{r-2}\vee K^c_{n-r+2}$ and $G^*(n,K_{s,t}\mathrm{-minor})=K_{s-1}\vee H$ for $r\ge3$, $t\ge s\ge 2$ and sufficiently large $n$, where $H$ is defined in Theorem \ref{thm:Kst minor-1}.
From Theorems \ref{thm:F-free and odd n} and \ref{thm:F-free and even n}, we can describe graphs with maximum second largest eigenvalue of graphs among $K_r$- or $K_{s,t}$-minor-free graphs.

\begin{theorem}\label{thm:Kr minor-2}
Let $G$ be a graph with maximum second largest eigenvalue among $K_r$-minor-free connected graphs on $n$ vertices, where $r\ge 3$ and $n$ is sufficiently large, then 
\begin{align*}
    G\in
    \begin{cases}
        \mathcal{I}\big(G^*(\frac{n}{2},K_r\mathrm{-minor}),G^*(\frac{n}{2},K_r\mathrm{-minor})\big), &\ \mathrm{when}\ n\ \mathrm{is}\ \mathrm{odd};\\
        \mathcal{E}(H_1,H_2), &\ \mathrm{when}\ n\ \mathrm{is}\ \mathrm{even},
    \end{cases}
\end{align*}
where $H_1$ and $H_2$ are $K_r$-minor-saturated and $n(H_1)=n(H_2)=\frac{n}{2}$.
\end{theorem}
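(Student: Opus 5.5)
The plan is to obtain Theorem \ref{thm:Kr minor-2} as a direct specialization of Theorems \ref{thm:F-free and odd n} and \ref{thm:F-free and even n}, with ``$F$-free'' read as ``$K_r$-minor-free''. Those proofs use only four ingredients of the forbidden family:
\begin{itemize}
\item[(a)] closure under deleting edges, so that Theorem \ref{thm:remove edge between N+ and N-} can be applied;
\item[(b)] the existence of the extremal function $\rho^*(m,K_r\mathrm{-minor})$ for all $m\ge n_F$;
\item[(c)] strict monotonicity of $\rho^*(m,\cdot)$ in $m$;
\item[(d)] the lower-bound constructions being admissible, namely the graphs in $\mathcal{I}(G^*,G^*)$ for odd $n$ and $G_1'u_1u_2G_2'$ for even $n$.
\end{itemize}
By Remark \ref{rek:broader application}, the cut-edge restriction is needed only for (d).

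First I verify (a)--(c). A minor of $G-uv$ is a minor of $G$, so (a) holds. Theorem \ref{thm:Kr minor-1} supplies the extremal graph $G^*(m,K_r\mathrm{-minor})=K_{r-2}\vee K^c_{m-r+2}$ for $m$ large, which gives (b); we take $n_F$ to be that threshold and require $n>2n_F$. For (c), note that $K_{r-2}\vee K^c_{m-r+2}$ is a proper subgraph of $K_{r-2}\vee K^c_{m-r+3}$. By Lemma \ref{lm:subgraph} and Perron--Frobenius for connected graphs, $\rho^*$ increases strictly in $m$.

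Next, and this is the main point, I verify (d). The key fact is that $K_r$ is $2$-connected for $r\ge3$, so $K_r$-minor-freeness is preserved under $1$-sums and under adding pendant vertices or bridges. Indeed, if a graph with a cut vertex or bridge had a $K_r$ minor, the branch sets could be moved into a single block, since any two branch sets must be adjacent; the model then lives in one block. So every block of the constructed graph would have to be a minor of $K_{r-2}\vee K^c$, or a $K_2$.

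For the odd case, the hub vertex joined to both copies could create a $K_r$ minor. I therefore restrict $\mathcal{I}$ to its $K_r$-minor-free members, as the statement of Theorem \ref{thm:F-free and odd n} already does with ``and $G$ is $F$-free''. Joining the new vertex to a single vertex of each copy is certainly minor-free by the block argument, so the family is nonempty. That is all the lower bound (\ref{eq:L1-F}) uses, since the interlacing argument works for any member.

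For the even case, $G_1'u_1u_2G_2'$ consists of two copies of the extremal graph joined by a path through two pendant vertices. Its blocks are copies of $G^*(\frac n2-1,\cdot)$ and $K_2$'s, so it is minor-free.

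With (a)--(d) in hand, the proofs of Theorems \ref{thm:F-free and odd n} and \ref{thm:F-free and even n} go through verbatim. For odd $n$ they give $G\in\mathcal{I}$ of two copies of the extremal graph on $\frac{n-1}{2}$ vertices; the statement's $\frac n2$ should read $\frac{n-1}{2}$. For even $n$ they give $G\in\mathcal{E}(H_1,H_2)$ with $|H_i|=\frac n2$. Saturation of $H_1$ and $H_2$ follows from Theorem \ref{thm: add edge in N+/N-}, applied to edges inside $V_\pm$ whose addition keeps the graph minor-free, which is exactly the definition of $K_r$-minor-saturated.

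The step I expect to be most delicate is the block argument for (d). I will state and prove it as a small lemma: a $K_r$ minor with $r\ge3$ of a graph with a cut vertex $w$ lies in one side together with $w$.
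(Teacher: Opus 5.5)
Your proposal is correct and follows the paper's route. The paper proves this statement by specializing Theorems~\ref{thm:F-free and odd n} and~\ref{thm:F-free and even n} to the $K_r$-minor-free class, with Tait's extremal graph $K_{r-2}\vee K^c_{m-r+2}$. Your version also makes explicit what the paper leaves implicit: because $K_r$ is $2$-connected, the lower-bound constructions stay $K_r$-minor-free. You are also right that the odd case should read $\frac{n-1}{2}$ rather than $\frac{n}{2}$.

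One small tightening is needed for saturation. Theorem~\ref{thm: add edge in N+/N-} only tells you that $G+e$ has a $K_r$ minor. To conclude that $H_1+e$ has one, you need your block lemma, applied across the bridge joining $H_1$ and $H_2$.
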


\begin{theorem}\label{thm:Kst minor-2}
Let $G$ be a graph with maximum second largest eigenvalue among $K_{s,t}$-minor-free connected graphs on $n$ vertices, where $t\ge s\ge 2$ and $n$ is sufficiently large, then 
\begin{align*}
    G\in
    \begin{cases}
        \mathcal{I}\big(G^*(\frac{n}{2},K_{s,t}\mathrm{-minor}),G^*(\frac{n}{2},K_{s,t}\mathrm{-minor})\big), &\ \mathrm{when}\ n\ \mathrm{is}\ \mathrm{odd};\\
        \mathcal{E}(H_1,H_2), &\ \mathrm{when}\ n\ \mathrm{is}\ \mathrm{even},
    \end{cases}
\end{align*}
where $H_1$ and $H_2$ are $K_{s,t}$-minor-saturated and $n(H_1)=n(H_2)=\frac{n}{2}$.
\end{theorem}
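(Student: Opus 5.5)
The plan is to derive Theorem \ref{thm:Kst minor-2} from Theorems \ref{thm:F-free and odd n} and \ref{thm:F-free and even n}, applied with $F$ replaced by the family of graphs containing $K_{s,t}$ as a minor. I take $\rho^*(n,K_{s,t}\mathrm{-minor})$ and $G^*(n,K_{s,t}\mathrm{-minor})=K_{s-1}\vee H$ from Theorem \ref{thm:Kst minor-1}, valid for $n\ge n_0$. The proofs of those two theorems use only four ingredients:
(a) the property of being $F$-free is closed under subgraphs, so deleting an edge of $E(V_+,V_-)$ keeps the graph admissible;
(b) $\rho^*(m,F)$ is strictly increasing in $m$ for $m$ large;
(c) the comparison graphs are admissible, namely the graphs in $\mathcal{I}(G^*(\frac{n-1}{2}),G^*(\frac{n-1}{2}))$ for odd $n$, and $G'_1u_1u_2G'_2$ and $G^*(\frac n2)uvG^*(\frac n2)$ for even $n$;
(d) Lemmas \ref{lem:null cut-set}, \ref{lem:connected by negative edges} and Theorems \ref{thm: add edge in N+/N-}, \ref{thm:remove edge between N+ and N-}, which do not depend on $F$.
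So the work is to check (a)--(c) for $K_{s,t}$-minor-freeness, as Remark \ref{rek:broader application} permits.

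For (a), a subgraph of a $K_{s,t}$-minor-free graph is again $K_{s,t}$-minor-free. For (b), I use Lemma \ref{lm:subgraph}: adding a vertex to $G^*(m)$ gives a graph that is still $K_{s,t}$-minor-free, and in which $G^*(m)$ is a proper subgraph. The added vertex should be pendant, or adjacent to a $K_{s-1}$ vertex, chosen carefully, and this gives $\rho^*(m+1)>\rho^*(m)$. The counting step of Claim \ref{clm:e(+,-) le 1-F} needs $\rho^*(\lfloor n/3\rfloor)<\rho^*(\frac{n-1}{2})$, which follows from this monotonicity. Since $n$ is sufficiently large, all the orders involved ($n/3$, $(n-1)/2-1$, $n/2-1$) exceed $n_0$.

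For (c), I use the standard fact that $K_{s,t}$ with $s\ge2$ is $2$-connected. Hence any minor model of $K_{s,t}$ in a graph whose blocks are all $K_{s,t}$-minor-free lies inside a single block, because a $2$-connected minor survives only within one block. The added cut vertex or the joining cut edge creates only bridges or vertices of degree at most $2$ attached through cut vertices. For graphs in $\mathcal{I}$ this is delicate, since the new vertex $w$ joins several vertices of each copy. I handle it as follows. The block containing $w$ together with the first copy is obtained from $G^*$ by adding a vertex adjacent to some of its vertices. This need not be minor-free in general, so the equality statement of Theorem \ref{thm:F-free and odd n} already includes the condition ``$G$ is $F$-free''. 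The theorem to be proved only asserts membership of the extremal $G$ in $\mathcal{I}$. For the lower bound (\ref{eq:L1-F}), it therefore suffices to use the member of $\mathcal{I}$ in which $w$ has exactly one neighbour in each copy. This member is admissible, because all new edges are bridges.

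For even $n$, $G'_1u_1u_2G'_2$ and $G^*(\frac n2)uvG^*(\frac n2)$ are obtained by bridges and pendant vertices, so the same block argument applies. The argument of Theorem \ref{thm:F-free and even n} then yields $G\in\mathcal{E}(H_1,H_2)$ with $|H_i|=\frac n2$, and with $H_i$ saturated by Theorem \ref{thm: add edge in N+/N-}. The step I expect to be the main obstacle is (b): verifying strict monotonicity of $\rho^*$ across the case distinctions of Theorem \ref{thm:Kst minor-1}. If the direct subgraph argument is awkward, I would instead compare the asymptotics $\rho^*(m)=\frac{s+t-3}{2}+\sqrt{(s-1)m}+O(1)$ at $m$ and $m'=m-\Theta(m)$, which suffices in every place where the proofs use monotonicity.
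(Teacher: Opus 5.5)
Your proposal is correct and takes the same route as the paper. The paper simply invokes Theorems \ref{thm:F-free and odd n} and \ref{thm:F-free and even n} (in the spirit of Remark \ref{rek:broader application}) with $G^*(m,K_{s,t}\mathrm{-minor})=K_{s-1}\vee H$ from Theorem \ref{thm:Kst minor-1}. Your block argument via the $2$-connectivity of $K_{s,t}$, together with your choice of the member of $\mathcal{I}$ whose new vertex has one neighbour in each copy, supplies admissibility checks the paper leaves implicit.

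Step (b) is not really an obstacle. The pendant-vertex construction plus the same block fact gives $\rho^*(m+1)>\rho^*(m)$ for every $m$. That block fact also justifies the saturation step, since an edge added inside $H_1$ keeps the whole graph $K_{s,t}$-minor-free whenever it keeps $H_1$ so. Your asymptotic fallback would not suffice anyway, because Case 1 of the odd case needs the consecutive comparison $\rho^*\big(\frac{n-3}{2}\big)<\rho^*\big(\frac{n-1}{2}\big)$.
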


Kuratowski’s theorem shows a graph is planar if and only if it is $\{K_5,K_{3,3}\}$-minor-free (see \cite{T81}).
A nice theorem by Chartrand and Harary \cite{CH67} shows a graph is outerplanar if and only if it is $\{K_4,K_{2,3}\}$-minor-free.
The study of planar graphs, as well as outerplanar graphs, has a long history. 
Perhaps the most celebrated theorem in graph theory is the Four Color Theorem \cite{AH77, AHK77}, which asserts every planar graph is 4-colorable.

On spectral radius of (outer)planar graphs, it is conjectured by Boots and Royle \cite{BR91} and independently Cao and Vince \cite{CV10} that $G^*(n,\{K_5,K_{3,3}\}\mathrm{-minor})$ is $P_2\vee P_{n-2}$, and conjectured by Cvetkovi\'c and Rowlinson \cite{CR90} that $G^*(n,\{K_4,K_{2,3}\}\mathrm{-minor})$ is $P_1\vee P_{n-1}$.
A recent work by Tait and Tobin \cite{TT17} confirmed Boots-Royle/Vince-Cao conjecture and Cvetkovi\'c-Rowlinson conjecture for sufficiently large order $n$.
It seems to be challenging to solve these conjectures completely.
Fortunately, Lin and Ning \cite{LN21} in 2021 gave a complete solution to Cvetkovi\'c-Rowlinson conjecture for all $n$.

\begin{theorem}\cite{LN21}\label{thm:outerplanar-1}
  Let $n\ge 2$ be an integer, then
  $$G^*(n,\{K_4,K_{2,3}\}\mathrm{-minor})=\begin{cases}
      P_1\vee P_{n-1},&\ \mathrm{when}\ n\not=6;\\
      H,&\ \mathrm{when}\ n=6,
  \end{cases}$$
  where $H$ is obtained from $P_1\vee P_4$ by adding a new vertex $v$ and connecting $u$ with two vertices of degree $3$ in $P_1\vee P_4$.
\end{theorem}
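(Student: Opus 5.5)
We would prove this by the standard extremal-spectral route, adapted so that it works for every $n$ rather than only sufficiently large $n$ as in Tait and Tobin \cite{TT17}. Let $G$ be an outerplanar graph of order $n$ with maximum spectral radius, and let $\textbf{x}$ be its Perron vector normalized so that $\max_v \textbf{x}_v=\textbf{x}_{u}=1$. Edge addition strictly increases $\rho$ (Lemma \ref{lm:subgraph} together with Perron--Frobenius for connected graphs), so $G$ is connected and edge-maximal outerplanar. Hence $G$ is a maximal outerplanar graph: a triangulated polygon with $2n-3$ edges, which gives $\rho(G)\ge \rho(P_1\vee P_{n-1})>\sqrt{n-1}$. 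The target is to show that $u$ is adjacent to every other vertex.

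\textbf{Step 1: eigenvector estimates.} Use $\rho^2\textbf{x}_u=\sum_{v}w_2(u,v)\textbf{x}_v$, where $w_2(u,v)$ counts walks of length $2$ from $u$ to $v$. Outerplanarity means any two vertices have at most two common neighbours, since otherwise a $K_{2,3}$ appears. Combining this with $e(G)\le 2n-3$ gives
$$\rho^2\le d(u)+2\sum_{v\ne u}\textbf{x}_v\cdot(\text{bounded multiplicities}).$$
From this we aim to derive $d(u)\ge n-c$ and $\textbf{x}_v\le c'/\sqrt n$ for all $v\ne u$. This is the Tait--Tobin argument, but with explicit constants instead of asymptotics.

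\textbf{Step 2: $u$ is dominating.} Suppose some $w\notin N(u)$. Delete the at most two edges from $w$ into the fan around $u$ and join $w$ to $u$, after first removing a suitable edge so that the graph stays outerplanar. The Rayleigh quotient $\textbf{x}^TA\textbf{x}$ changes by at least $2\textbf{x}_w\bigl(1-\sum_{\text{removed}}\textbf{x}_z\bigr)$, which is positive because the removed vertices have small entries by Step 1. This contradicts maximality.

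\textbf{Step 3: identify $G-u$.} Once $u$ is dominating, $G=K_1\vee H$. This graph is outerplanar if and only if $H$ is a linear forest, since a cycle in $H$ gives a $K_4$ minor and a vertex of degree $3$ in $H$ gives a $K_{2,3}$ minor. Edge-maximality then forces $H=P_{n-1}$, because joining two path components adds an edge. This yields $P_1\vee P_{n-1}$.

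\textbf{Main obstacle.} The hard part is making Steps 1--2 effective for all $n\ne 6$ rather than for huge $n$. We would first try to sharpen the walk-counting bound using the precise value of $\rho(P_1\vee P_{n-1})$, which is the largest root of an explicit function obtainable from the quotient/characteristic polynomial of a fan, and in particular exceeds $1+\sqrt{n-1}$ roughly. This should push the threshold down to a moderate $n_0$. The finitely many orders $n<n_0$ we would settle by enumerating maximal outerplanar graphs, which are triangulations of an $n$-gon, and computing spectral radii. That enumeration is exactly where the exceptional graph for $n=6$ appears: adding the vertex $v$ to $P_1\vee P_4$ over a degree-$3$ pair beats the fan $P_1\vee P_5$.
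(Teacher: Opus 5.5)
The paper gives no proof of this statement (it is quoted from Lin and Ning), so your proposal has to stand on its own. As written it does not prove the theorem.

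\textbf{The quantitative step is missing.} The reduction to maximal outerplanar graphs and Step 3 are fine. However, everything that separates ``all $n\ne 6$'' from Tait--Tobin's ``$n$ sufficiently large'' lives in Steps 1--2, and there it is only announced.
\begin{itemize}
\item With $\mathbf{x}_u=1$, your Step 1 display amounts to $\rho^2\le d(u)+2\rho+2\sum_{v\notin N[u]}\mathbf{x}_v$. Turning this into $d(u)\ge n-c$ requires an a priori bound on the entries of the non-neighbours of $u$. That bound is exactly the localisation you list only as an aim.
\item Your fallback (explicit constants down to some $n_0$, exhaustive search below it) closes the argument only if $n_0$ is tiny. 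The number of triangulated $n$-gons up to symmetry is about $C_{n-2}/(2n)$, roughly $1.5\cdot 10^{8}$ already at $n=22$, and it grows by almost a factor $4$ per vertex. Nothing in the proposal suggests that effective Tait--Tobin estimates come anywhere near that range.
\item The case $n=6$ shows how sharp the estimate must be. In the exceptional graph the three degree-$4$ vertices share the maximal Perron entry, and $\rho=1+\sqrt5\approx 3.236>\rho(P_1\vee P_5)\approx 3.223$. So the single-dominant-entry picture genuinely fails at $n=6$, and you would have to prove it holds from $n=7$ on (or from wherever your computation stops).
\end{itemize}
That quantitative step is the whole content of the Lin--Ning theorem beyond Tait--Tobin, and your proposal does not supply it.

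\textbf{Step 2 is not right as stated.}
\begin{itemize}
\item A non-neighbour $w$ of $u$ may have further neighbours outside $N(u)$. Deleting only its edges into $N(u)$ and adding $uw$ can then create a $K_{2,3}$ minor.
\item The unspecified ``suitable edge'' $ab$ you delete costs $2\mathbf{x}_a\mathbf{x}_b$ in the Rayleigh quotient. Your gain $2\mathbf{x}_w(1-\sum\mathbf{x}_z)$ omits this term, and when the non-maximal entries are of order $n^{-1/2}$ the two can be of the same order, so positivity is not established.
\end{itemize}

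A clean switch is available:
\begin{itemize}
\item In a maximal outerplanar graph, $G[N(u)]$ is a path $v_1\cdots v_d$, with $v_1,v_d$ the polygon neighbours of $u$.
\item Every vertex outside $N[u]$ lies in a pocket cut off by a chord $v_iv_{i+1}$. Every pocket contains an ear $w\notin N(u)$ with $d_G(w)=2$, say $N(w)=\{a,b\}$.
\item Moving $w$ to become an ear on the outer edge $uv_1$ keeps the graph maximal outerplanar. It changes the Rayleigh quotient by $2\mathbf{x}_w(\mathbf{x}_u+\mathbf{x}_{v_1}-\mathbf{x}_a-\mathbf{x}_b)$.
\end{itemize}
So what you actually need is an inequality such as $\mathbf{x}_v<\tfrac12\mathbf{x}_u$ for all $v\ne u$ in an extremal graph. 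It must be proved for every $n\ge 7$, or down to a genuinely computable threshold; that estimate is the missing idea.
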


In a recent breakthrough, Brooks, Gu, Hyatt, Linz and Lu \cite{BGHLL25} investigated the second largest eigenvalue of outerplanar graphs.

\begin{theorem}\cite{BGHLL25}\label{thm:outerplanar-2}
  For sufficiently large $n$, let $G$ be a graph with maximum second largest eigenvalue among all outerplanar connected graphs on $n$ vertices, if $n$ is odd, then
  $$G\in \mathcal{I}\big(P_1\vee P_{\frac{n-1}{2}-1},P_1\vee P_{\frac{n-1}{2}-1}\big),$$
  and if $n$ is even, then
  $$G= H_1uvH_2,$$
  where $H_1$ and $H_2$ are copies of $P_1\vee P_{\frac{n}{2}-1}$, and $u$ (and $v$) is a vertex of degree $2$.
\end{theorem}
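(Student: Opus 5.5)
The plan is to derive the statement from the general machinery of Section \ref{section:3}, taking $\mathcal{F}=\{K_4,K_{2,3}\}$ as a family of forbidden minors. Theorems \ref{thm:F-free and odd n} and \ref{thm:F-free and even n} apply verbatim to such a family. The only point to check is the restriction discussed in Remark \ref{rek:broader application}: the auxiliary graphs built in those proofs must again be outerplanar. This holds because outerplanarity is a property of blocks. Gluing outerplanar graphs along a cut vertex or a cut edge keeps them outerplanar. Hence the graphs $G_1'u_1u_2G_2'$, and a new vertex joined to one vertex of each of two copies of $G^*(\frac{n-1}{2},\mathcal{F})$, are outerplanar. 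For the extremal graphs themselves I will use Theorem \ref{thm:outerplanar-1}: for $m\neq 6$, $G^*(m,\mathcal{F})=P_1\vee P_{m-1}$. Since $n$ is large, $m=\frac{n-1}{2}$ or $\frac n2$ avoids $6$.

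\emph{Odd $n$.} Theorem \ref{thm:F-free and odd n} gives directly $\lambda_2(G)\le\rho(P_1\vee P_{\frac{n-1}{2}-1})$, with equality exactly for outerplanar members of $\mathcal{I}(P_1\vee P_{\frac{n-1}{2}-1},P_1\vee P_{\frac{n-1}{2}-1})$. This family is nonempty, as noted above. So the odd case needs no further work.

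\emph{Even $n$.} Theorem \ref{thm:F-free and even n} gives $G\in\mathcal{E}(H_1,H_2)$, where $H_1,H_2$ are maximal outerplanar on $m=\frac n2$ vertices and $V(H_1)=V_+$, $V(H_2)=V_-$. Its proof gives $\lambda_2(G)<\min\{\rho(H_1),\rho(H_2)\}$, and Lemma \ref{prop:asymptotic value for F} gives $\lambda_2(G)>\rho(P_1\vee P_{m-1})-\frac1m$. So it suffices to prove a \emph{spectral gap}: every outerplanar graph $H$ on $m$ vertices with $H\neq P_1\vee P_{m-1}$ satisfies $\rho(H)\le \rho(P_1\vee P_{m-1})-\frac1m$. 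This is the main obstacle.

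I would prove the gap in three steps.
\begin{enumerate}
\item Let $\textbf{\textit{x}}$ be the Perron vector of $H$ and $w$ a vertex with maximum entry. Following Tait--Tobin, $\rho(H)\ge\sqrt{m}$ forces $\textbf{\textit{x}}_w\approx\frac1{\sqrt2}$, $\textbf{\textit{x}}_z=O(1/\sqrt m)$ for all $z\neq w$, and $d(w)\ge m-O(\sqrt m)$.
\item If $d(w)=m-1$, then $H-w$ is an outerplanar graph with no $K_{1,3}$-free failure. Precisely, $H$ is $K_4$- and $K_{2,3}$-minor free, so $H-w$ is a linear forest, and maximality of $\rho$ up to $\frac1m$ forces it to be the path $P_{m-1}$. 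A Rayleigh-quotient computation shows that each missing path edge costs $\Theta(1/m)$ with an explicit constant; this step needs checking.
\item If $d(w)\le m-2$, pick a non-neighbour $z$ of $w$. Adding $wz$ and deleting a suitable edge of $H-w$ keeps $H$ outerplanar. The change in the Rayleigh quotient is about $2\textbf{\textit{x}}_w\textbf{\textit{x}}_z-O(1/m)\gtrsim c/\sqrt m\gg\frac1m$. Iterating reaches $P_1\vee P_{m-1}$ and gives the gap.
\end{enumerate}
The delicate point is the lower bound $\textbf{\textit{x}}_z\gtrsim 1/\sqrt m$ for the non-neighbour $z$. I would obtain it from the eigen-equation, using that $z$ has neighbours adjacent to $w$.

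Once $H_1\cong H_2\cong P_1\vee P_{m-1}$, it remains to choose the bridging vertices $u,v$. I will argue as in Lemma \ref{lem:complete multipartite graph for even n}, using the formula from \cite{S74}:
$$\phi(H_1uvH_2)=\phi(H_1)\phi(H_2)-\phi(H_1-u)\phi(H_2-v).$$
Here $\lambda_2(H_1uvH_2)$ lies in $(\lambda_1(H_1-u),\rho(H_1))$, and there $\phi(H_i)<0<\phi(H_i-u)$. Replacing $u$ by a vertex $u'$ with $\phi(H_1-u')(x)<\phi(H_1-u)(x)$ on that interval makes $\phi(H_1u'vH_2)$ positive at the old $\lambda_2$, which strictly increases $\lambda_2$. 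So the optimal $u$ is the one whose deletion leaves the spectrally largest graph. Among the vertices of $P_1\vee P_{m-1}$ this is an endpoint of the path, i.e.\ a vertex of degree $2$, since $H-u\cong P_1\vee P_{m-2}$ contains every other $H-u'$ spectrally. I would verify the polynomial inequality by comparing $\phi(P_1\vee P_{m-2})$ with $\phi(P_1\vee(P_a\cup P_b))$, again through the Schwenk recursion. The same argument applies to $v$, which yields $G=H_1uvH_2$ as claimed.
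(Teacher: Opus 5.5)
The paper does not prove this statement; it is quoted from Brooks, Gu, Hyatt, Linz and Lu. The paper's own derivation is the theorem stated right after Theorem \ref{thm:outerplanar-2}. For odd $n$ it argues exactly as you do. For even $n$ it stops at $G\in\mathcal{E}(H_1,H_2)$, with $H_1,H_2$ maximal outerplanar on $m=\frac n2$ vertices and $\lambda_2(G)<\min\{\rho(H_1),\rho(H_2)\}$.

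Your odd case and this reduction are fine. Your last step is also fine: if $u$ does not have degree $2$, then $H_1-u$ is isomorphic to a proper spanning subgraph of $P_1\vee P_{m-2}$. This covers the hub too, since $H_1-w=P_{m-1}$. Hence $\phi(H_1-u)>\phi(P_1\vee P_{m-2})$ at $x=\lambda_2(H_1uvH_2)$, and the Schwenk comparison goes through. The genuine gap is the spectral-gap step. This is precisely the part the paper does not supply, and your argument for it does not work.

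First, the gap you set out to prove is false as stated. Take $H=P_1\vee(P_1\cup P_{m-2})$, i.e.\ delete the rim edge at one end of the path. Solving $\rho=\mathbf 1^{T}(\rho I-A(L))^{-1}\mathbf 1$ for $P_1\vee L$ gives $\rho(P_1\vee P_{m-1})-\rho(H)=\frac1m-(1+o(1))m^{-3/2}<\frac1m$. So step 2 cannot be settled by a ``$\Theta(1/m)$ with explicit constant'' computation. Fortunately it is unnecessary: saturation of $H_i$ already forces $H-w$ to be a path.

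More seriously, step 3 rests on $\mathbf x_z\gtrsim m^{-1/2}$ for a non-neighbour $z$ of the hub, and this is false. Take $P_1\vee P_{m-2}$ and add a vertex $z$ adjacent to two consecutive rim vertices $y_i,y_{i+1}$. This graph is maximal outerplanar, and $\rho\,\mathbf x_z=\mathbf x_{y_i}+\mathbf x_{y_{i+1}}=O(m^{-1/2})$, so $\mathbf x_z=\Theta(1/m)$. Your switch keeps outerplanarity only if you add $wz$ and delete $y_iy_{i+1}$. That switch changes the Rayleigh quotient by $2\mathbf x_w\mathbf x_z-2\mathbf x_{y_i}\mathbf x_{y_{i+1}}=(1+o(1))\frac1m$. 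This is the same size as your threshold $\frac1m$ from Lemma \ref{prop:asymptotic value for F}, so no contradiction follows. The true gap in this example is about $\frac{1}{2\sqrt m}$. However, it comes from $z$ acquiring Perron weight $\Theta(m^{-1/2})$ after the switch, which a first-order argument with the old eigenvector cannot detect.

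What is missing is a global estimate showing that every maximal outerplanar $H$ with $\Delta(H)\le m-2$ has $\rho(H)$ well below $\rho(P_1\vee P_{m-1})$. One possible route is $\rho^2\mathbf x_w=\sum_{y\sim w}\sum_{z\sim y}\mathbf x_z$ together with the fact that two vertices of an outerplanar graph have at most two common neighbours. It also helps that the proof of Lemma \ref{prop:asymptotic value for F} actually gives $\lambda_2(G)\ge\rho(P_1\vee P_{m-1})-\mathbf x_u^2$, where $\mathbf x_u^2\approx\frac{1}{2m}$ for the fan. That leaves considerably more room than $\frac1m$.
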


Here we can extend Theorem \ref{thm:outerplanar-2} by reducing the restriction of order for odd $n$ and describing the graph $G^*(n,\{K_5,K_{3,3}\}\mathrm{-minor})$ for any even $n$.
By Theorems \ref{thm:F-free and odd n} and \ref{thm:F-free and even n}, combining with Theorem \ref{thm:outerplanar-1}, we can directly obtain the following result.

\begin{theorem}
  Let $G$ be a graph with maximum second largest eigenvalue among all outerplanar connected graphs on $n$ vertices, if $n$ is odd, then
  $$
  G\in
  \begin{cases}
  \mathcal{I}\big(P_1\vee P_{\frac{n-1}{2}-1},P_1\vee P_{\frac{n-1}{2}-1}\big),&\ \mathrm{when}\ n\not=13;\\
  \mathcal{I}\big(H,H\big),&\ \mathrm{when}\ n=13,
  \end{cases}
  $$
  where $H$ is defined as Theorem \ref{thm:outerplanar-1},
  and if $n$ is even, then
  $$G\in \mathcal{E}(H_1,H_2)$$
  and $\lambda_2(G)=\rho(P_1\vee P_{\frac{n}{2}-1})-o(1)$, where $H_1$ and $H_2$ are edge-maximal outerplanar graphs on $\frac{n}{2}$ vertices.
\end{theorem}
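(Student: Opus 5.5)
The plan is to treat the statement as a direct specialization of Theorems \ref{thm:F-free and odd n} and \ref{thm:F-free and even n} to the family $\mathcal{F}=\{K_4,K_{2,3}\}$-minor. The inputs are the values of $\rho^*(m,\mathcal{F})$ and $G^*(m,\mathcal{F})$ for every $m\ge 2$, which Theorem \ref{thm:outerplanar-1} supplies. Since that theorem holds for all $m$, we may take $n_{\mathcal F}$ small (essentially $2$), so no largeness assumption on $n$ is needed.

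First I would check the hypotheses used in those proofs, in the spirit of Remark \ref{rek:broader application}. The odd case needs the graphs in $\mathcal{I}(G^*(\frac{n-1}{2},\mathcal F),G^*(\frac{n-1}{2},\mathcal F))$ to be outerplanar. They are: joining a new vertex to one vertex of each copy creates only cut vertices, and outerplanarity is closed under $1$-sums and pendant attachments. The even case needs the graph $G_1'u_1u_2G_2'$ to be outerplanar, which holds for the same reason. I also need two monotonicity facts. The first is that $\rho^*(m,\mathcal F)$ is strictly increasing in $m$; this follows by adding a pendant vertex to $G^*(m,\mathcal F)$. The second is that deleting an edge keeps a graph outerplanar.

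Second, for odd $n$, I would apply Theorem \ref{thm:F-free and odd n} with $m=\frac{n-1}{2}$. This gives $G\in\mathcal{I}(G^*(m,\mathcal F),G^*(m,\mathcal F))$. By Theorem \ref{thm:outerplanar-1}, $G^*(m,\mathcal F)=P_1\vee P_{m-1}$ unless $m=6$, i.e.\ $n=13$, in which case it is $H$. That yields the two displayed cases. One point to check is that every member of $\mathcal I(\cdot,\cdot)$ is automatically outerplanar. By the remark above it is, since the linking vertex is joined to only one vertex of each side; if it is joined to several vertices of a side, the ``$G$ is $F$-free'' clause in Theorem \ref{thm:F-free and odd n} handles it.

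Third, for even $n$, Theorem \ref{thm:F-free and even n} gives $G\in\mathcal E(H_1,H_2)$ with $H_i$ being $\mathcal F$-saturated on $\frac n2$ vertices, i.e.\ edge-maximal outerplanar. It also gives the sandwich
\[
\rho^*\Big(\frac n2,\mathcal F\Big)-\frac 2n<\lambda_2(G)<\rho^*\Big(\frac n2,\mathcal F\Big),
\]
where the lower bound comes from Lemma \ref{prop:asymptotic value for F} and the upper bound from the proof of Theorem \ref{thm:F-free and even n}. Substituting $\rho^*(\frac n2,\mathcal F)=\rho(P_1\vee P_{\frac n2-1})$ gives $\lambda_2(G)=\rho(P_1\vee P_{\frac n2-1})-o(1)$. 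The exception at $\frac n2=6$ is a single finite case and does not affect the asymptotic statement.

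The main obstacle I expect is the hypothesis ``$F$ has no cut edges'' together with $n\ge 2n_F$, since $\mathcal F$ is a minor family rather than a single subgraph. I would handle it through Remark \ref{rek:broader application}: verify directly that the auxiliary constructions stay outerplanar, and that the $\lambda_2$-increasing edge operations used in the proofs (Theorems \ref{thm: add edge in N+/N-} and \ref{thm:remove edge between N+ and N-}) only ever delete edges or add edges inside an extremal saturated part. Minor-closedness makes the deletions harmless, and the additions are only used to conclude saturation.
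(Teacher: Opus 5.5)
Your proposal is correct and follows the paper's route exactly. The paper obtains this theorem in one line by specializing Theorems \ref{thm:F-free and odd n} and \ref{thm:F-free and even n} to $\{K_4,K_{2,3}\}$-minor-free graphs, with $G^*(m,\cdot)$ supplied by Theorem \ref{thm:outerplanar-1>}; the case $m=\frac{n-1}{2}=6$ gives the $n=13$ exception. Your checks of minor-closedness, the bridge/pendant constructions, and the sandwich $\rho^*(\frac n2)-\frac 2n<\lambda_2(G)<\rho^*(\frac n2)$ just make explicit what the paper leaves implicit. One small correction: not every member of $\mathcal{I}(\cdot,\cdot)$ is outerplanar, for instance when the linking vertex has several neighbours in one copy. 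You never need that claim, though, because the statement only asserts membership in $\mathcal{I}$ as a necessary condition on an outerplanar $G$.
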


From the work by Tait and Tobin \cite{TT17}, it proved that for sufficiently large $n$,
$$G^*(n,\{K_5,K_{3,3}\}\mathrm{-minor})=P_2\vee P_{n-2}.$$
Similarly, we can establish an upper bound for $\lambda_2(G)$ among planar graphs.

\begin{theorem}\label{thm:planar-2}
Let $G$ be a graph with maximum second largest eigenvalue among all planar connected graphs on $n$ vertices, where $n$ is sufficiently large, then 
\begin{align*}
    G\in
    \begin{cases}
        \mathcal{I}\big(P_2\vee P_{\frac{n}{2}-2}, P_2\vee P_{\frac{n}{2}-2}\big), &\ \mathrm{when}\ n\ \mathrm{is}\ \mathrm{odd};\\
        \mathcal{E}(H_1,H_2), &\ \mathrm{when}\ n\ \mathrm{is}\ \mathrm{even},
    \end{cases}
\end{align*}
and $\lambda_2(G)=\rho(P_2\vee P_{\frac{n}{2}-2})-o(1)$, where $H_1$ and $H_2$ are edge-maximal planar graphs on $\frac{n}{2}$ vertices.
\end{theorem}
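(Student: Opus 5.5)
The plan is to specialize Theorems \ref{thm:F-free and odd n} and \ref{thm:F-free and even n} to the family $\mathcal{F}=\{K_5,K_{3,3}\}$-minors, exactly as for outerplanar graphs. The input is the Tait--Tobin result that, for sufficiently large $m$, $G^*(m,\{K_5,K_{3,3}\}\mathrm{-minor})=P_2\vee P_{m-2}$. Since $n$ is sufficiently large, $m=\lfloor n/2\rfloor$ is also large enough for this to apply, and $n_F$ may be taken finite so that $n>2n_F$.

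\textbf{Verifying the constructions.} The general theorems need two auxiliary graphs to be planar. In the odd case this is a member of $\mathcal{I}(G^*(\frac{n-1}{2}),G^*(\frac{n-1}{2}))$. In the even case this is $G_1'u_1u_2G_2'$, built from two copies of $G^*(\frac n2-1)$ with pendant vertices joined by an edge. Planarity is closed under the following operations:
\begin{itemize}
\item adding a new vertex joined to one outer vertex of each of two disjoint plane graphs;
\item attaching pendant vertices;
\item joining two disjoint components by an edge.
\end{itemize}
Every $K_5$ or $K_{3,3}$ minor is $2$-connected, so it must live inside one block, and the blocks of these constructed graphs are blocks of planar graphs or single edges. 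Hence Remark \ref{rek:broader application} applies, even though we argue about minors rather than subgraphs.

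The same block argument shows that deleting an edge keeps a graph planar. So the edge-deletion step (Theorem \ref{thm:remove edge between N+ and N-}) stays within the class. The edge-addition step (Theorem \ref{thm: add edge in N+/N-}) only needs saturation, which here means edge-maximal planar.

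\textbf{Odd $n$.} Theorem \ref{thm:F-free and odd n} gives $\lambda_2(G)\le\rho(P_2\vee P_{\frac{n-1}{2}-2})$. Equality holds exactly for the graphs in $\mathcal I(P_2\vee P_{\frac{n-1}{2}-2},P_2\vee P_{\frac{n-1}{2}-2})$, and these are all planar by the block remark. Two notes on the statement itself:
\begin{itemize}
\item The index written $\frac n2-2$ in the statement should be read as $\frac{n-1}{2}-2$.
\item Since $G$ is assumed extremal, it lies in the family.
\end{itemize}

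\textbf{Even $n$.} Theorem \ref{thm:F-free and even n} gives $G\in\mathcal E(H_1,H_2)$ with $H_1,H_2$ saturated, i.e.\ edge-maximal planar on $\frac n2$ vertices. Its proof gives $\lambda_2(G)<\rho^*(\frac n2)=\rho(P_2\vee P_{\frac n2-2})$. Lemma \ref{prop:asymptotic value for F} gives the lower bound $\rho(P_2\vee P_{\frac n2-2})-\frac 2n$, which yields the $-o(1)$ statement.

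\textbf{Main obstacle.} The delicate step is checking that the minor-closed setting is compatible with the hypotheses used in Theorems \ref{thm:F-free and odd n} and \ref{thm:F-free and even n}. These are:
\begin{itemize}
\item strict monotonicity of $\rho^*(m)$ in $m$;
\item closure of the class under edge deletion and under the gluing constructions.
\end{itemize}
For monotonicity, I would argue that $G^*(m)$ plus a pendant vertex is planar. Its spectral radius is therefore at most $\rho^*(m+1)$ and strictly exceeds $\rho^*(m)$.

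The largeness threshold must be chosen so that Tait--Tobin applies at $\lfloor n/3\rfloor$-type orders. The comparisons in Claim \ref{clm:e(+,-) le 1-F} only need $\rho^*(m')\le\rho^*(m)$ for $m'<m$, which follows from monotonicity, so no explicit formula is needed there.
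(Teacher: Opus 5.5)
Your proposal is correct and follows the paper's route: the paper obtains Theorem \ref{thm:planar-2} by combining Theorems \ref{thm:F-free and odd n} and \ref{thm:F-free and even n} (through Remark \ref{rek:broader application}) with the Tait--Tobin result $G^*(m,\{K_5,K_{3,3}\}\mathrm{-minor})=P_2\vee P_{m-2}$, and your block-decomposition checks of the auxiliary constructions, your closure checks under edge deletion and addition, and your strict monotonicity of $\rho^*$ spell out what the paper leaves as ``similarly''. One side remark is false and should be dropped (it is not needed, since the extremal $G$ is planar by assumption): not every graph in $\mathcal{I}(P_2\vee P_{m-2},P_2\vee P_{m-2})$ is planar, because $P_2\vee P_{m-2}$ has $3m-6$ edges and is therefore a triangulation, so a connecting vertex with four or more neighbours in one copy creates a block with more than $3(m+1)-6$ edges; this is exactly why Theorem \ref{thm:F-free and odd n} keeps the extra requirement that $G$ be $F$-free in its equality case.
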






\bibliographystyle{plain}

\end{document}